\documentclass[preprint,11pt]{elsarticle}
\usepackage[left=20mm,right=20mm,top=20mm,bottom=20mm]{geometry}
\usepackage{amssymb}
\usepackage{amsthm}
\usepackage{amsmath}
\usepackage{amsfonts}
\usepackage{hyperref} 
\usepackage{cleveref}
\usepackage{graphicx} 
\usepackage{mathtools}
\usepackage{enumitem} 
\usepackage{listings}         
\usepackage{bm}
\usepackage{color}
\usepackage{xcolor}
\usepackage{diagbox}   
\usepackage{booktabs}
\usepackage{subcaption} 
\usepackage{multirow}
\usepackage{caption}
\usepackage{algorithm}
\usepackage{algorithmic}

\newtheorem{theorem}{Theorem}[section]
\newtheorem{lemma}{Lemma}[section]

\begin{document}

\title{Dynamical low-rank approximation for the semiclassical Schr\"{o}dinger equation with uncertainties}

\author[1]{Liu Liu\corref{cor1}}
\ead{lliu@math.cuhk.edu.hk}

\author[2]{Limin Xu}
\ead{xulimin@westlake.edu.cn}

\author[3]{Zhenyi Zhu}
\ead{zyzhu@math.cuhk.edu.hk}

\cortext[cor1]{Corresponding author}

\address[1]{Department of Mathematics, The Chinese University of Hong Kong, Hong Kong, 999077, China}

\address[2]{Institute for Theoretical Sciences, Westlake University, Hangzhou, 310030, China}
            
\address[3]{Department of Mathematics, The Chinese University of Hong Kong, Hong Kong, 999077, China}

\begin{abstract}
In this paper, we propose a dynamical low-rank (DLR) approximation framework for solving the semiclassical Schrödinger equation with uncertainties. The primary numerical challenges arise from the dual nature of the oscillations: the spatial oscillations inherent in the semiclassical limit and the high-frequency oscillations in the random space induced by uncertainties. We extend two robust integrators—the projector-splitting integrator and the unconventional integrator—to the semiclassical regime to evolve the solution on a low-rank manifold. Through extensive numerical experiments, we demonstrate that the DLR method is significantly more computationally efficient than the standard stochastic Galerkin method, as it captures the essential quantum dynamics using a much smaller number of basis functions. Our findings reveal that despite the complex oscillatory patterns of the wave function, its evolution remains concentrated in a low-rank subspace for the cases investigated. Specifically, we observe that the DLR method achieves high fidelity with a remarkably small numerical rank, which remains robust even as the semiclassical parameter $\varepsilon$ decreases. Within our problem settings, the results further suggest that the rank growth is primarily driven by the randomness and regularity of the potential. These results provide practical insights into the low-rank structure of uncertain quantum systems and offer an efficient approach for high-dimensional uncertainty quantification in the semiclassical regime.
\end{abstract}

\begin{keyword}
semiclassical Schr\"odinger equation \sep uncertainty quantification \sep dynamical low-rank approximation \sep high oscillations \sep projector-splitting integrator
\end{keyword}

\maketitle

\section{Introduction}

In this article, we consider the semiclassical Schrödinger equation with uncertainties, formulated as follows:
\begin{equation}
\label{Schro-equation}
\begin{cases}
\mathrm{i} \varepsilon \partial_t \psi^{\varepsilon}(t, \bm{x}, \bm{\xi}) = -\frac{\varepsilon^{2}}{2} \Delta_{\bm{x}} \psi^{\varepsilon}(t, \bm{x}, \bm{\xi}) + V(\bm{x}, \bm{\xi}) \psi^{\varepsilon}(t, \bm{x}, \bm{\xi}), \\
\psi^{\varepsilon}(0, \bm{x}, \bm{\xi}) = \psi^{\varepsilon}_{0}(\bm{x}, \bm{\xi}), 
\end{cases}
\end{equation}
where $\bm{x} \in \mathbb{R}^m$ and $\bm{\xi} \in \mathbb{R}^n$ denote the spatial and random variables, respectively. Here, $\varepsilon$ is the semiclassical parameter ($0 < \varepsilon \ll 1$), $\psi^{\varepsilon}$ is the wave function, $V$ is the potential, and $\pi(\bm{\xi})$ is the probability density function of the random variable $\bm{\xi}$. 

In the deterministic setting (where $V$ and $\psi_0$ are independent of $\bm{\xi}$), the semiclassical Schrödinger equation describes the quantum dynamics of nuclei on potential energy surfaces under the Born-Oppenheimer approximation 
\cite{lasserComputingQuantumDynamics2020}. 
It serves as a cornerstone in fields such as quantum chemistry and solid-state physics. It is well known that numerically solving the deterministic equation already presents two formidable challenges: (i) the high dimensionality of the spatial variable $\bm{x}$, which leads to the curse of dimensionality , and (ii) the highly oscillatory nature of the wave function with wavelength $\mathcal{O}(\varepsilon)$, which imposes stringent requirements on numerical discretization \cite{jinMathematicalComputationalMethods2011}.

For the direct simulation of the wave function, the time-splitting spectral method (TSSP) is a widely used approach \cite{baoTimeSplittingSpectralApproximations2002}. However, to correctly capture the oscillations, TSSP requires a restrictive meshing strategy where both the time step and spatial grid size satisfy $\Delta t, \Delta x = \mathcal{O}(\varepsilon)$. While $\Delta t = \mathcal{O}(1)$ can be sufficient if only physical observables are of interest, resolving the full wave function remains computationally intensive for small $\varepsilon$ \cite{golseConvergenceTimeSplitting2019}. To mitigate these costs, various asymptotic methods valid as $\varepsilon \to 0$ have been developed, including level set methods \cite{jinComputationSemiclassicalLimit} and moment closure methods based on WKB analysis or Wigner transforms. Additionally, wave-packet-based approaches, such as the Gaussian beam method 
\cite{huangGaussianBeamMethods2012,jinBlochDecompositionbasedGaussian2010,jinGaussianBeamMethods2014}
and the frozen Gaussian wave packet method 
\cite{chaiFrozenGaussianApproximation2019,luFrozenGaussianApproximation,luFrozenGaussianApproximation2011,xieFrozenGaussianSampling2021a},
reduce the full quantum dynamics to the evolution of Gaussian wave packets. We refer to \cite{jinMathematicalComputationalMethods2011,lasserComputingQuantumDynamics2020} for a comprehensive review of these deterministic methods.

When the physical medium is influenced by external environments—such as randomly distributed impurities 
\cite{avishai1992electron,fisher1991directed}
amorphous materials, or random alloys 
\cite{johnson1986density,leschke2005survey}
—the potential field must be treated as a random variable \cite{anderson1958absence,roati2008anderson}. Furthermore, practical limitations in experimental techniques inevitably introduce uncertainties into the initial conditions \cite{jinGaussianWavePacket2020}. While several theoretical studies have addressed the semiclassical Schrödinger equation with uncertainties \cite{gerard1997homogenization,hagedorn1998raising,lee1982exact,xiu2002wiener,zhou2014numerical}, numerical methods specifically designed for \eqref{Schro-equation} remain relatively scarce \cite{gunzburger2014stochastic}.

To resolve the general numerical challenges in Uncertainty Quantification (UQ), such as the curse of dimensionality, Stochastic Galerkin (SG) and multi-fidelity methods within the stochastic collocation framework \cite{Babuska2010siamrev, Giles2008or, Mishra2016siamjuq} have proven to be highly effective. Compared with the classical Monte Carlo method, the SG approach enjoys spectral accuracy in the random space provided the solution is sufficiently smooth. Simultaneously, numerous non-intrusive multi-fidelity approaches have been developed to approximate high-fidelity solutions efficiently by combining models of varying costs and accuracies \cite{Survey2018, Zhu2014siamjuq}. These frameworks have seen successful applications in kinetic equations \cite{ DLPZ25, Dimarco2019jcp, Liu2020jcp, YL26} and, more recently, the Schrödinger equation \cite{lin2025class}.

Beyond the inherent difficulties of the deterministic setting, the semiclassical Schrödinger equation with uncertainties introduces two additional significant challenges. The first is the high dimensionality of the random variable $\bm{\xi}$, which leads to the curse of dimensionality—a common bottleneck in most uncertainty quantification (UQ) problems \cite{wuBlochDecompositionbasedStochastic2016}. The second, and more specific to the equation under study, is the high oscillation of the wave function with respect to the random variable \cite{jinGaussianWavePacket2020}. These oscillations necessitate specialized numerical treatments to avoid prohibitive computational costs as $\varepsilon \ll 1$. To address these challenges, several effective approaches have been proposed specifically for the Schrödinger equation: Wu and Huang \cite{wuBlochDecompositionbasedStochastic2016} combined the stochastic Galerkin method with a Bloch-decomposition-based time-splitting spectral method for periodic potentials; Liu et al. \cite{jinGaussianWavePacket2020} integrated the stochastic collocation method with the Gaussian wave packet transform to resolve random-space oscillations; and Lin and Liu \cite{lin2025class} employed multi-fidelity approaches for efficient numerical computations.

In addition to these developments, several effective numerical strategies have been proposed to handle Schr\"odinger-type equations with random or multiscale potentials. For instance, the localized orthogonal decomposition method was developed to treat multiscale potential structures \cite{WZ2022}, while quasi-Monte Carlo time-splitting pseudospectral methods have been employed to improve sampling efficiency \cite{WZZ2024}. Other notable contributions include the implementation of efficient finite element methods for semiclassical problems \cite{JLZC25, PZ2025} and the design of specialized numerical integrators for continuous disordered systems \cite{Zhao2021}. These works have  enriched research results on the topic of uncertainty quantification for quantum dynamics.

\textbf{Main contributions.} In this article, we address the challenge of resolving high oscillations in the random space for the semiclassical Schrödinger equation with uncertainties by extending the dynamical low-rank (DLR) approximation framework. Originally developed for matrices \cite{kochDynamicalLowRankApproximation2007} and time-dependent PDEs \cite{einkemmerLowrankProjectorsplittingIntegrator2020}, we adapt this framework to the uncertainty quantification (UQ) setting to exploit the latent structure of the stochastic wave function. Our primary contribution is a systematic numerical investigation into the performance and efficiency of the DLR approach in the semiclassical regime. Through extensive numerical experiments, we demonstrate that—at least for the cases investigated—the essential dynamics of the oscillatory wave function remain concentrated on a low-rank manifold even as $\varepsilon \to 0$. Most notably, the DLR method is shown to achieve comparable accuracy to the standard stochastic Galerkin method while utilizing a significantly smaller number of basis functions. This drastic reduction in the required degrees of freedom leads to substantial savings in both computational and storage costs, providing a robust and efficient alternative for high-dimensional UQ problems in quantum dynamics. These findings offer practical guidance for the efficient simulation of quantum systems where traditional methods become computationally prohibitive.

The rest of this paper is organized as follows. In \cref{Sec: DLR_Schrodinger}, we derive the DLR evolution equations specifically for the semiclassical Schrödinger equation, adapt the splitting and unconventional integrators to this setting, and analyze their computational and storage complexities. Detailed numerical experiments, including efficiency benchmarks against the stochastic Galerkin method and sensitivity analyses of the rank dynamics, are presented in \cref{Sec: Numerical}. Finally, \cref{Sec: conclusion} concludes the paper with a summary of our findings and potential directions for future research. For the reader's convenience, a concise review of the mathematical formulation and foundational integrators of the DLR approximation for matrices is provided in \ref{Sec: DLR_matrix}.

    \section{Dynamical low-rank approximation for the uncertain Schr\"odinger equation}
\label{Sec: DLR_Schrodinger}

In this section, we apply the dynamical low-rank (DLR) framework to the semiclassical Schrödinger equation subject to uncertainties. We begin by establishing the necessary mathematical notation and deriving the explicit DLR evolution equations tailored for the wave function $\psi(t, x, \xi)$. Subsequently, we extend both the projector-splitting and unconventional integrators to this stochastic setting. A comprehensive description of the resulting numerical algorithms is provided, followed by a detailed discussion on their computational complexity and implementation efficiency.

\subsection{Notations and Formula}

For clarity, we define the inner products and the associated norms for the wave functions as follows:
\begin{align*}
    \langle \psi, \phi \rangle_{x,\xi} &:= \int_{\mathbb{R}^m} \int_{\mathbb{R}^n} \psi(t,x,\xi) \bar{\phi}(t,x,\xi) \pi(\xi) \mathrm{d}x\mathrm{d}\xi, \quad \|\psi\| := \sqrt{\langle \psi, \psi \rangle_{x,\xi}}, \\
    \langle \psi, \phi \rangle_{x} &:= \int_{\mathbb{R}^m} \psi(t,x,\xi) \bar{\phi}(t,x,\xi) \mathrm{d}x, \\
    \langle \psi, \phi \rangle_{\xi} &:= \int_{\mathbb{R}^n} \psi(t,x,\xi) \bar{\phi}(t,x,\xi) \pi(\xi) \mathrm{d}\xi,
\end{align*}
where $\bar{\phi}$ denotes the complex conjugate of $\phi$. Furthermore, the expectation with respect to the random variable $\xi$ is denoted by:
\begin{align*}
    \mathbb{E}[f(t,x,\xi)] := \int_{\mathbb{R}^n} f(t,x,\xi) \pi(\xi) \mathrm{d}\xi = \langle f, 1 \rangle_{\xi}.
\end{align*}

We seek a low-rank approximation of the wave function $\psi(t,x,\xi)$ in the following form:
\begin{align*}
    \psi(t,x,\xi) \approx \psi_A(t,x,\xi) = \sum_{i=1}^{r} \sum_{j=1}^{r} X_i(t,x) S_{ij}(t) W_j(t,\xi),
\end{align*}
where $\{X_i(t,x)\}_{i=1}^r$ and $\{W_j(t,\xi)\}_{j=1}^r$ are sets of orthonormal basis functions satisfying:
\begin{align}\label{eq: orthonormal basis}
    \langle X_i, X_j \rangle_{x} = \delta_{ij}, \quad \langle W_i, W_j \rangle_{\xi} = \delta_{ij}, \quad \forall i,j = 1, \dots, r,
\end{align}
and $S(t) = (S_{ij}(t)) \in \mathbb{C}^{r \times r}$ is a nonsingular core matrix. The low-rank manifold $\mathcal{M}_r$ is then defined as:
\begin{equation}
    \mathcal{M}_r := \left\{ \sum_{i,j=1}^{r} X_i S_{ij} W_j : \{X_i, W_j\} \text{ satisfy \eqref{eq: orthonormal basis}}, S \text{ is nonsingular} \right\}.
\end{equation}

To derive the evolution equations, we characterize the tangent space $\mathcal{T}_{\psi_A}\mathcal{M}_r$ at a point $\psi_A \in \mathcal{M}_r$:
\begin{lemma}\label{lemma: tangent space of M_r at psi_A} 
    Any tangent vector $\eta \in \mathcal{T}_{\psi_A}\mathcal{M}_r$ can be uniquely decomposed into the following form:
    \begin{align*}
        \eta = \sum_{i,j=1}^{r} \left( \eta_{X,i} S_{ij} W_j + X_i \eta_{S,ij} W_j + X_i S_{ij} \eta_{W,j} \right),
    \end{align*}  
    where $\eta_{S,ij} \in \mathbb{C}$ represents the variation of the core matrix, and $\eta_{X,i}(x)$, $\eta_{W,j}(\xi)$ are the tangent components associated with the basis functions. These components must satisfy the following orthogonality relations:
    \begin{align}\label{eq: derivative of orthogonality}
        \langle \eta_{X,i}, X_j \rangle_{x} + \langle X_i, \eta_{X,j} \rangle_{x} = 0, \quad \langle \eta_{W,i}, W_j \rangle_{\xi} + \langle W_i, \eta_{W,j} \rangle_{\xi} = 0.
    \end{align}
\end{lemma}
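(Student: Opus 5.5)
The plan is to realize every tangent vector as the velocity of a smooth curve on $\mathcal{M}_r$ and then differentiate the defining constraints. Take $\eta \in \mathcal{T}_{\psi_A}\mathcal{M}_r$. By definition there is a smooth curve $\tau \mapsto \psi(\tau) = \sum_{i,j} X_i(\tau) S_{ij}(\tau) W_j(\tau)$ in $\mathcal{M}_r$ with $\psi(0)=\psi_A$ and $\psi'(0)=\eta$. Here $\{X_i(\tau)\}$ and $\{W_j(\tau)\}$ are orthonormal in $\langle\cdot,\cdot\rangle_x$ and $\langle\cdot,\cdot\rangle_\xi$, and $S(\tau)$ is nonsingular.

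A preliminary point is that the factors can be chosen to depend smoothly on $\tau$. I would lift through the SVD-type factorization: locally, $\mathcal{M}_r$ is the image of the smooth submersion $(X,S,W)\mapsto XSW^{*}$ from the product of two Stiefel-type manifolds with $GL_r(\mathbb{C})$. Given that, set $\eta_{X,i}=X_i'(0)$, $\eta_{S,ij}=S_{ij}'(0)$ and $\eta_{W,j}=W_j'(0)$. The product rule then gives exactly the three-term decomposition in the statement. Differentiating $\langle X_i(\tau),X_j(\tau)\rangle_x=\delta_{ij}$ and $\langle W_i(\tau),W_j(\tau)\rangle_\xi=\delta_{ij}$ at $\tau=0$ gives \eqref{eq: derivative of orthogonality} immediately, since the inner products are sesquilinear and $\tau$ is real. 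For completeness I would also check the converse: any triple satisfying \eqref{eq: derivative of orthogonality} comes from such a curve. One can take $X(\tau)$ to be the orthonormalization (for instance the polar factor) of $X+\tau\eta_X$, similarly for $W$, and $S(\tau)=S+\tau\eta_S$. So the set of such $\eta$ is exactly the tangent space.

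The main obstacle is \emph{uniqueness}. The relations \eqref{eq: derivative of orthogonality} only say that the Gram matrices $A_{ij}=\langle \eta_{X,j},X_i\rangle_x$ and $B_{ij}=\langle \eta_{W,j},W_i\rangle_\xi$ are skew-Hermitian. Replacing $\eta_X\to\eta_X+XC$, $\eta_W\to\eta_W+WD$ and $\eta_S\to\eta_S-CS-SD^{*}$, with $C,D$ skew-Hermitian, leaves $\eta$ unchanged, so the decomposition is unique only modulo this gauge freedom. I would therefore prove uniqueness under the standard gauge $\langle \eta_{X,i},X_j\rangle_x=0$ and $\langle \eta_{W,i},W_j\rangle_\xi=0$, which is a particular solution of \eqref{eq: derivative of orthogonality}. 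Under this gauge the components are recovered from $\eta$ by projection:
\[
\eta_{S,ij}=\langle \eta, X_iW_j\rangle_{x,\xi},\qquad
\sum_i \eta_{X,i}S_{ij}=\langle \eta, W_j\rangle_\xi-\sum_i X_i\eta_{S,ij},
\]
and analogously for $\eta_W$. The $X$- and $W$-components are then obtained by inverting $S$, which is where nonsingularity of $S$ is used.

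Existence of the gauged form follows by absorbing the skew parts $A$ and $B$ of an arbitrary representation into $\eta_S$, using the substitution above with $C=-A$ and $D=-B$. Uniqueness follows because the displayed formulas determine every component from $\eta$ alone. I would remark that this gauge choice is exactly what later yields the $K$-, $S$- and $L$-step equations, and that the lemma's "uniquely" should be read in this sense.
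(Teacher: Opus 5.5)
Your proposal is correct, and its core (realizing $\eta$ as the velocity of a curve with smooth factors, applying the product rule, and differentiating the orthonormality constraints) is exactly the paper's proof, which simply takes the smooth factorization of the curve for granted. Your caveat on uniqueness is also right and agrees with the paper: its proof never establishes uniqueness, it remarks right after the lemma that the representation is not unique, and it recovers uniqueness under the gauge $\langle \eta_{X,i},X_j\rangle_x=\langle \eta_{W,i},W_j\rangle_\xi=0$ in the next lemma through the same projection formulas you wrote; one small point is that with the paper's unconjugated ansatz $\sum_{i,j}X_iS_{ij}W_j$ the compensating shift is $\eta_S\to\eta_S-CS-SD^{T}$ rather than $-SD^{*}$, which does not affect your conclusion.
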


\begin{proof}
    Consider a smooth curve $\psi_A(\tau,x,\xi)=\sum_{i,j=1}^{r}X_i(\tau,x)S_{ij}(\tau)W_j(\tau,\xi)$ within the manifold $\mathcal{M}_r$, such that $\psi_A(0,x,\xi)=\psi_A(x,\xi)$. Differentiating $\psi_A(\tau,x,\xi)$ with respect to $\tau$ at $\tau=0$ yields a general tangent vector $\eta \in \mathcal{T}_{\psi_A}\mathcal{M}_r$:
    \begin{align*}
        \eta = \left.\frac{\mathrm{d}}{\mathrm{d}\tau}\right|_{\tau=0} \psi_A(\tau,x,\xi)
        &= \sum_{i,j=1}^{r} \left( \dot{X}_i(0) S_{ij}(0) W_j(0) + X_i(0) \dot{S}_{ij}(0) W_j(0) + X_i(0) S_{ij}(0) \dot{W}_j(0) \right) \\
        &= \sum_{i,j=1}^{r} \left( \eta_{X,i} S_{ij} W_j + X_i \eta_{S,ij} W_j + X_i S_{ij} \eta_{W,j} \right),
    \end{align*}
    where we identify the tangent components as $\eta_{X,i} = \dot{X}_i(0)$, $\eta_{S,ij} = \dot{S}_{ij}(0)$, and $\eta_{W,j} = \dot{W}_j(0)$. 
    Since the basis functions $\{X_i(\tau,x)\}$ and $\{W_j(\tau,\xi)\}$ remain orthonormal for all $\tau$ along the curve, they satisfy the conditions:
    \begin{align*}
        \langle X_i(\tau), X_j(\tau) \rangle_{x} = \delta_{ij}, \quad \langle W_i(\tau), W_j(\tau) \rangle_{\xi} = \delta_{ij}.
    \end{align*} 
    Differentiating these identities with respect to $\tau$ at $\tau=0$ leads to:
    \begin{align*}
        \langle \dot{X}_i(0), X_j(0) \rangle_{x} + \langle X_i(0), \dot{X}_j(0) \rangle_{x} = 0, \quad \langle \dot{W}_i(0), W_j(0) \rangle_{\xi} + \langle W_i(0), \dot{W}_j(0) \rangle_{\xi} = 0,
    \end{align*}
    which, in terms of the tangent components, is equivalent to:
    \begin{align*}
        \langle \eta_{X,i}, X_j \rangle_{x} + \langle X_i, \eta_{X,j} \rangle_{x} = 0, \quad 
        \langle \eta_{W,i}, W_j \rangle_{\xi} + \langle W_i, \eta_{W,j} \rangle_{\xi} = 0.
    \end{align*}
    This completes the proof.
\end{proof}

The aforementioned lemma indicates that the representation of a tangent vector $\eta$ in terms of $\eta_X, \eta_W$, and $\eta_S$ is not unique. To ensure a one-to-one correspondence between the tangent vector and its components, we impose the following gauge conditions:
\begin{align}\label{eq: orthogonal gauge}
    \langle \eta_{X,i}, X_j \rangle_{x} = 0, \quad \langle \eta_{W,i}, W_j \rangle_{\xi} = 0, \quad \forall i, j = 1, \dots, r.
\end{align}
Under these conditions, the components are uniquely determined by $\eta$, as stated in the following lemma.

\begin{lemma}
    If the tangent components satisfy the gauge conditions in \eqref{eq: orthogonal gauge}, then $\eta_S, \eta_X$, and $\eta_W$ are uniquely determined by $\eta$ through the following relations:
    \begin{align}
        \eta_{S,ij} &= \langle \eta, X_i W_j \rangle_{x,\xi}, \label{eq: unique S} \\
        \sum_{j=1}^{r} S_{ij} \eta_{W,j} &= \langle \eta, X_i \rangle_{x} - \sum_{j=1}^{r} \eta_{S,ij} W_j, \label{eq: unique W} \\
        \sum_{i=1}^{r} \eta_{X,i} S_{ij} &= \langle \eta, W_j \rangle_{\xi} - \sum_{i=1}^{r} X_i \eta_{S,ij}. \label{eq: unique X}
    \end{align}
\end{lemma}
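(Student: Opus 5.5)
The plan is to start from the decomposition of \cref{lemma: tangent space of M_r at psi_A},
\begin{align*}
\eta = \sum_{k,l=1}^{r} \left( \eta_{X,k} S_{kl} W_l + X_k \eta_{S,kl} W_l + X_k S_{kl} \eta_{W,l} \right),
\end{align*}
and take partial inner products against the basis functions. The only tools needed are the orthonormality relations \eqref{eq: orthonormal basis} and the gauge conditions \eqref{eq: orthogonal gauge}. Since $\langle\cdot,\cdot\rangle_x$ is linear in the first slot and only integrates in $x$, a product $f(x)g(\xi)$ satisfies $\langle f g, X_i\rangle_x = \langle f, X_i\rangle_x\, g$, and the analogous factorization holds for $\langle\cdot,\cdot\rangle_\xi$. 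We use this factorization throughout without further comment.

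\textbf{Step 1 (formula \eqref{eq: unique S}).} Take the full inner product $\langle \eta, X_i W_j\rangle_{x,\xi}$. By Fubini it factors termwise as
\begin{align*}
\langle \eta_{X,k}, X_i\rangle_x \langle W_l, W_j\rangle_\xi,\qquad \langle X_k, X_i\rangle_x\langle W_l,W_j\rangle_\xi,\qquad \langle X_k, X_i\rangle_x\langle \eta_{W,l}, W_j\rangle_\xi .
\end{align*}
The first and third factors vanish by the gauge conditions, and the middle one is $\delta_{ki}\delta_{lj}$. Hence $\langle \eta, X_i W_j\rangle_{x,\xi}=\eta_{S,ij}$.

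\textbf{Step 2 (formula \eqref{eq: unique W}).} Take only the $x$-inner product $\langle \eta, X_i\rangle_x$. The $\eta_X$-terms drop out by the gauge condition. The remaining terms give $\sum_l \eta_{S,il}W_l + \sum_l S_{il}\eta_{W,l}$ by orthonormality of the $X_k$. Rearranging yields \eqref{eq: unique W}.

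\textbf{Step 3 (formula \eqref{eq: unique X}).} Symmetrically, take $\langle \eta, W_j\rangle_\xi$. The $\eta_W$-terms vanish, and we obtain $\sum_k \eta_{X,k}S_{kj} + \sum_k X_k \eta_{S,kj}$, which rearranges to \eqref{eq: unique X}.

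\textbf{Step 4 (uniqueness).} Formula \eqref{eq: unique S} fixes $\eta_S$ from $\eta$ alone. With $\eta_S$ known, the right-hand sides of \eqref{eq: unique W} and \eqref{eq: unique X} are determined by $\eta$. Write these relations in vector form, $S\,\boldsymbol{\eta}_W = \text{(known)}$ and $\boldsymbol{\eta}_X^{\top} S = \text{(known)}$. Because $S$ is nonsingular by the definition of $\mathcal{M}_r$, we can invert and get $\boldsymbol{\eta}_W = S^{-1}(\cdots)$ and $\boldsymbol{\eta}_X^{\top} = (\cdots)S^{-1}$. Consequently, any two gauge-respecting representations of the same $\eta$ coincide.

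The only delicate point is bookkeeping for complex conjugation, since the inner products are conjugate-linear in the second slot. The basis functions always sit in the conjugated slot, and the coefficients $S_{kl},\eta_{S,kl}$ multiply the first slot, so they come out unconjugated. I would also state explicitly that the gauge condition is used in the form $\langle \eta_{X,k}, X_i\rangle_x=0$, with the tangent component in the first slot, which is exactly how it appears in the computation.
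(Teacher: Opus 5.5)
Your proposal is correct and follows the paper's approach. You take the inner products of $\eta$ with $X_iW_j$, $X_i$ and $W_j$, and use orthonormality together with the gauge conditions. Your explicit appeal to the nonsingularity of $S$ to turn the three relations into uniqueness of $\eta_X$ and $\eta_W$ fills in a step the paper leaves implicit.
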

\begin{proof}
    The proof follows directly by taking the inner products of $\eta$ with the basis functions $X_i$ and $W_j$, and applying the gauge conditions \eqref{eq: orthogonal gauge} along with the orthonormality of the basis.
\end{proof}

We now derive the dynamical low-rank evolution equations for the semiclassical Schrödinger equation with uncertainties, rewritten as:
\begin{align*}
    \partial_t \psi = F(\psi) := \frac{1}{\mathrm{i}\varepsilon} \left( -\frac{\varepsilon^2}{2} \Delta_x \psi + V(x,\xi) \psi \right).
\end{align*}
The DLR approximation seeks an approximate trajectory $\psi_A(t) \in \mathcal{M}_r$ by enforcing the Galerkin projection onto the tangent space $\mathcal{T}_{\psi_A}\mathcal{M}_r$:
\begin{align}\label{eq: Galerkin projection for semiclassical Schrodinger equation}
    \langle \partial_t \psi_A - F(\psi_A), \eta \rangle_{x,\xi} = 0, \quad \forall \eta \in \mathcal{T}_{\psi_A}\mathcal{M}_r.
\end{align}
Substituting the explicit form of $\partial_t \psi_A$ and the test function $\eta$ into \eqref{eq: Galerkin projection for semiclassical Schrodinger equation} yields the following governing equations for the low-rank factors:
\begin{theorem}
    The Galerkin projection \eqref{eq: Galerkin projection for semiclassical Schrodinger equation} is equivalent to the following system of evolution equations for the low-rank factors, under the orthogonal gauge $\langle \dot{X}_i, X_j \rangle_x = 0$ and $\langle \dot{W}_i, W_j \rangle_\xi = 0$:
    \begin{align}\label{eq: total_velocity}
        \dot{\psi}_A = \sum_{i,j=1}^{r} \left( \dot{X}_i S_{ij} W_j + X_i \dot{S}_{ij} W_j + X_i S_{ij} \dot{W}_j \right),
    \end{align}
    where
    \begin{align}
        \dot{S}_{ij} &= \langle F(\psi_A), X_i W_j \rangle_{x,\xi}, \label{eq: dot_S} \\
        \sum_{k=1}^{r} S_{ik} \dot{W}_k &= \langle F(\psi_A), X_i \rangle_{x} - \sum_{k=1}^{r} \dot{S}_{ik} W_k, \label{eq: dot_W} \\
        \sum_{k=1}^{r} \dot{X}_k S_{kj} &= \langle F(\psi_A), W_j \rangle_{\xi} - \sum_{k=1}^{r} X_k \dot{S}_{kj}. \label{eq: dot_X}
    \end{align}
\end{theorem}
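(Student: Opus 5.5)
The plan is to reduce the Galerkin condition \eqref{eq: Galerkin projection for semiclassical Schrodinger equation} to a statement about the orthogonal projection onto the tangent space, and then read off the factor equations with the preceding uniqueness lemma.

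\textbf{Step 1 (tangent space is a closed subspace).} First I will observe that $\mathcal{T}_{\psi_A}\mathcal{M}_r$ is a complex linear subspace. Since $S$ is nonsingular, Lemma \ref{lemma: tangent space of M_r at psi_A} together with the gauge \eqref{eq: orthogonal gauge} identifies it with
\begin{align*}
\mathcal{T}_{\psi_A}\mathcal{M}_r=\Big\{\textstyle\sum_{i,j}\big(U_i S_{ij}W_j+X_i C_{ij}W_j+X_iS_{ij}Y_j\big)\Big\},
\end{align*}
where $C\in\mathbb{C}^{r\times r}$ is arbitrary and $U_i\perp \mathrm{span}\{X_k\}$, $Y_j\perp\mathrm{span}\{W_k\}$. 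Because $S$ is invertible, the terms $\sum_i U_iS_{ij}$ and $\sum_j S_{ij}Y_j$ range over all functions orthogonal to the respective bases. Hence the space is
\begin{align*}
\mathcal{T}_{\psi_A}\mathcal{M}_r=\mathrm{span}\{X_iW_j\}\oplus\big(\mathrm{span}\{X_i\}^{\perp}\otimes\mathrm{span}\{W_j\}\big)\oplus\big(\mathrm{span}\{X_i\}\otimes\mathrm{span}\{W_j\}^{\perp}\big).
\end{align*}
This space is closed, and its orthogonal projection is
\begin{align*}
P(g)=P_X g+g P_W-P_X g P_W,
\end{align*}
where $P_X g=\sum_i X_i\langle g,X_i\rangle_x$ and $gP_W=\sum_j\langle g,W_j\rangle_\xi W_j$.

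\textbf{Step 2 (equivalence with the projected ODE).} Since $\partial_t\psi_A\in\mathcal{T}_{\psi_A}\mathcal{M}_r$, differentiating the curve as in the proof of Lemma \ref{lemma: tangent space of M_r at psi_A} gives \eqref{eq: total_velocity}. The Galerkin condition \eqref{eq: Galerkin projection for semiclassical Schrodinger equation} for all $\eta$ in the subspace then holds if and only if
\begin{align*}
\dot\psi_A=P\big(F(\psi_A)\big).
\end{align*}
Complex linearity lets us test with $\eta$ and $\mathrm{i}\eta$, so the complex inner product condition is equivalent to the projection identity.

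\textbf{Step 3 (read off the factors).} Now apply the uniqueness lemma with $\eta=\dot\psi_A$ and components $\eta_X=\dot X$, $\eta_S=\dot S$, $\eta_W=\dot W$, which satisfy the gauge by assumption. Relations \eqref{eq: unique S}--\eqref{eq: unique X} give $\dot S_{ij}=\langle \dot\psi_A,X_iW_j\rangle_{x,\xi}$ and the analogous formulas for $\dot W$ and $\dot X$. It remains to replace $\dot\psi_A$ by $F(\psi_A)$ in each. This is justified because the test functions $X_iW_j$, $X_i\otimes(\cdot)$, and $(\cdot)\otimes W_j$ lie in the tangent space. Concretely:
\begin{itemize}
\item $\langle PF,X_iW_j\rangle=\langle F,X_iW_j\rangle$, since $X_iW_j$ is tangent.
\item $\langle PF,X_i\rangle_x=\langle F,X_i\rangle_x$ as a function of $\xi$, since $\langle P_XF,X_i\rangle_x=\langle F,X_i\rangle_x$ and the other two terms cancel.
\item The $\xi$-side identity follows symmetrically.
\end{itemize}
Substituting these yields \eqref{eq: dot_S}--\eqref{eq: dot_X}.

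\textbf{Converse and main difficulty.} Conversely, suppose the factors obey \eqref{eq: dot_S}--\eqref{eq: dot_X}. Inverting $S$ determines $\dot W$ and $\dot X$, and the gauge holds automatically because the right-hand sides are orthogonal to the bases. Assembling \eqref{eq: total_velocity} then reproduces $P_XF+FP_W-P_XFP_W$, so the Galerkin condition holds. I expect the main care point to be Step 1: showing the tangent space is exactly the span above, which is where nonsingularity of $S$ is used. It also requires checking that the cross terms in $P$ cancel correctly. The remaining steps are bookkeeping with the orthonormality relations \eqref{eq: orthonormal basis}.
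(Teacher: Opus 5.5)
Your proof is correct, but it is organized differently from the paper's. The paper never forms the tangent-space projector inside this proof. It substitutes the component form of $\eta$ from Lemma~\ref{lemma: tangent space of M_r at psi_A} into the Galerkin condition and isolates each factor by choosing test directions. First it takes $\eta_X=\eta_W=0$ with $\eta_S$ arbitrary, which gives \eqref{eq: dot_S}. Then it takes $\eta_{X,l}=(I-P_X)\phi$ with $\phi$ arbitrary, which produces $\sum_i \dot X_i (SS^H)_{il}=\sum_j \bar S_{lj}\bigl(\langle F(\psi_A),W_j\rangle_\xi-\sum_k X_k\dot S_{kj}\bigr)$; invertibility of $S^H$ then yields \eqref{eq: dot_X}, and \eqref{eq: dot_W} follows by symmetry. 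You instead describe the whole tangent space as an orthogonal sum and reduce the Galerkin condition to $\dot\psi_A=P(F(\psi_A))$ with $P=P_X+P_W-P_XP_W$. You then read off the factors from \eqref{eq: unique S}--\eqref{eq: unique X}, after checking that $PF$ and $F$ have the same pairings with $X_iW_j$, with $X_i$ in $x$, and with $W_j$ in $\xi$. Your route gives two things the paper's proof does not. First, it proves the projector formula \eqref{eq: projector_operator_form}, which the paper only asserts afterwards as an observation and then uses to build the splitting integrator. Second, it gives an explicit converse, including the fact that the gauge is automatically compatible with \eqref{eq: dot_W}--\eqref{eq: dot_X}, because their right-hand sides are orthogonal to the bases. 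The paper's route is more hands-on and avoids any global description of the tangent space. One care point is shared by both arguments. Lemma~\ref{lemma: tangent space of M_r at psi_A} only shows that tangent vectors have the stated form. What you use in Step~2, and what the paper uses when it takes $\eta_S$ and $\phi$ arbitrary, is the reverse inclusion: every combination with arbitrary $C$, $U_i\perp\mathrm{span}\{X_k\}$, $Y_j\perp\mathrm{span}\{W_k\}$ is tangent. This follows from the curve obtained by orthonormalizing $X_i+\tau U_i$ and $W_j+\tau Y_j$ and taking $S+\tau C$, so it deserves one sentence in your Step~1.
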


\begin{proof}
    By Lemma \ref{lemma: tangent space of M_r at psi_A}, any tangent vector $\eta \in \mathcal{T}_{\psi_A}\mathcal{M}_r$ can be decomposed into components $\eta_X, \eta_W$, and $\eta_S$. We substitute these into the Galerkin condition \eqref{eq: Galerkin projection for semiclassical Schrodinger equation} and test against different component directions.

    \textit{(1) Derivation of $\dot{S}_{ij}$:} 
    By setting $\eta_{X,i} = 0$ and $\eta_{W,j} = 0$ for all $i,j$, the test function becomes $\eta = \sum_{i,j=1}^{r} X_i \eta_{S,ij} W_j$. Substituting this into \eqref{eq: Galerkin projection for semiclassical Schrodinger equation} and using the orthonormality of the bases, we have:
    \begin{align*}
        \langle \dot{\psi}_A, \eta \rangle_{x,\xi} = \sum_{i,j=1}^r \dot{S}_{ij} \bar{\eta}_{S,ij}, \quad \text{and} \quad \langle F(\psi_A), \eta \rangle_{x,\xi} = \sum_{i,j=1}^r \langle F(\psi_A), X_i W_j \rangle_{x,\xi} \bar{\eta}_{S,ij}.
    \end{align*}
    Since $\eta_S$ is arbitrary, we obtain $\dot{S}_{ij} = \langle F(\psi_A), X_i W_j \rangle_{x,\xi}$, which is \eqref{eq: dot_S}.

    \textit{(2) Derivation of $\dot{X}_i$:} 
    Next, we set $\eta_{S} = 0$ and $\eta_{W} = 0$. To isolate $\dot{X}_l$, we choose $\eta_{X,i} = 0$ for $i \neq l$. The orthogonal gauge $\langle \eta_{X,l}, X_k \rangle_x = 0$ implies that $\eta_{X,l} = (I - P_X) \phi$, where $P_X = \sum_{k=1}^r X_k X_k^H$ is the projection onto the space spanned by the current basis, and $\phi(x)$ is an arbitrary function. The test function is $\eta = \sum_{j=1}^r \eta_{X,l} S_{lj} W_j$.
    The Galerkin condition yields:
    \begin{align*}
        \langle \dot{\psi}_A, \eta \rangle_{x,\xi} &= \left\langle \sum_{i,j=1}^r \dot{X}_i S_{ij} W_j, \eta_{X,l} S_{lj} W_j \right\rangle_{x,\xi} = \left\langle \sum_{i=1}^r \dot{X}_i \sum_{j=1}^r S_{ij} \bar{S}_{lj}, (I - P_X) \phi \right\rangle_x, \\
        \langle F(\psi_A), \eta \rangle_{x,\xi} &= \sum_{j=1}^r \bar{S}_{lj} \langle F(\psi_A), (I - P_X) \phi W_j \rangle_{x,\xi} = \sum_{j=1}^r \bar{S}_{lj} \langle (I - P_X) \langle F(\psi_A), W_j \rangle_\xi, \phi \rangle_x.
    \end{align*}
    Utilizing the fact that $(I-P_X)$ is self-adjoint and $\dot{X}_i$ already satisfies the gauge condition (i.e., $(I-P_X)\dot{X}_i = \dot{X}_i$), the arbitrariness of $\phi$ leads to:
    \begin{align*}
        \sum_{i=1}^r \dot{X}_i \sum_{j=1}^r S_{ij} \bar{S}_{lj} = \sum_{j=1}^r \bar{S}_{lj} \left( \langle F(\psi_A), W_j \rangle_\xi - \sum_{k=1}^r X_k \langle F(\psi_A), X_k W_j \rangle_{x,\xi} \right).
    \end{align*}
    Assuming $S$ is nonsingular and substituting $\dot{S}_{kj}$ from \eqref{eq: dot_S}, we obtain:
    \begin{align*}
        \sum_{i=1}^r \dot{X}_i S_{ij} = \langle F(\psi_A), W_j \rangle_\xi - \sum_{k=1}^r X_k \dot{S}_{kj},
    \end{align*}
    which is \eqref{eq: dot_X}.

    \textit{(3) Derivation of $\dot{W}_j$:}
    The derivation for the evolution of the random basis $\dot{W}_j$ follows a procedure analogous to that of the spatial basis in step (2) by choosing $\eta_S = 0$, $\eta_X = 0$, and is therefore omitted.
\end{proof}

\subsection{Numerical Integrators for the DLR Approximation}
In this section, we apply the projector-splitting and unconventional integrators to the semiclassical Schrödinger equation with uncertainties. 

Substituting the evolution equations \eqref{eq: dot_S}--\eqref{eq: dot_X} into the total formula \eqref{eq: total_velocity}, we observe that the DLR approximation can be recast as a projection of the governing operator:
\begin{align}\label{eq: projected_F}
    \partial_t \psi_A = \mathcal{P}(\psi_A) F(t, \psi_A),
\end{align}
where $\mathcal{P}(\psi_A)$ is the orthogonal projection operator onto the tangent space $\mathcal{T}_{\psi_A} \mathcal{M}_r$. For a given function $g(x, \xi)$, this operator can be explicitly expressed as:
\begin{align}\label{eq: projector_expression}
    \mathcal{P}(\psi_A) g = \sum_{j=1}^{r} \langle g, W_j \rangle_{\xi} W_j - \sum_{i,j=1}^{r} \langle g, X_i W_j \rangle_{x,\xi} X_i W_j + \sum_{i=1}^{r} \langle g, X_i \rangle_{x} X_i.
\end{align}
Let $\mathcal{V}_X = \operatorname{span}\{X_1, \dots, X_r\}$ and $\mathcal{V}_W = \operatorname{span}\{W_1, \dots, W_r\}$ denote the subspaces spanned by the spatial and random bases, respectively. The projector $\mathcal{P}(\psi_A)$ can then be written in the following operator form:
\begin{align}\label{eq: projector_operator_form}
    \mathcal{P}(\psi_A) g = P_{\mathcal{V}_W} g - P_{\mathcal{V}_W} P_{\mathcal{V}_X} g + P_{\mathcal{V}_X} g,
\end{align}
where $P_{\mathcal{V}_W}$ and $P_{\mathcal{V}_X}$ are the orthogonal projectors onto $\mathcal{V}_W$ (with respect to $\langle \cdot, \cdot \rangle_{\xi}$) and $\mathcal{V}_X$ (with respect to $\langle \cdot, \cdot \rangle_{x}$), respectively.

By applying the Lie-Trotter splitting method to \eqref{eq: projected_F} according to the three terms in \eqref{eq: projector_operator_form}, the projector-splitting scheme decomposes each time step $t \in [t_n, t_{n+1}]$ into three sub-problems:
\begin{align*}
    \text{(I) } \partial_t \psi^{(1)} &= P_{\mathcal{V}_W} F(t, \psi^{(1)}), \quad \psi^{(1)}(t_n) = \psi_A(t_n), \\
    \text{(II) } \partial_t \psi^{(2)} &= -P_{\mathcal{V}_W} P_{\mathcal{V}_X} F(t, \psi^{(2)}), \quad \psi^{(2)}(t_n) = \psi^{(1)}(t_{n+1}), \\
    \text{(III) } \partial_t \psi^{(3)} &= P_{\mathcal{V}_X} F(t, \psi^{(3)}), \quad \psi^{(3)}(t_n) = \psi^{(2)}(t_{n+1}),
\end{align*}
where the solution at the next step is set as $\psi_A(t_{n+1}) = \psi^{(3)}(t_{n+1})$. The following lemma characterizes the evolution of the low-rank factors within each substep:
\begin{lemma}
    The solutions $\psi^{(k)}(t) = \sum_{i,j=1}^r X_i^{(k)}(t) S_{ij}^{(k)}(t) W_j^{(k)}(t)$ for $k=1,2,3$ are determined as follows:
    \begin{enumerate}
        \item \textbf{K-step}: $W_j^{(1)}$ remains constant ($W_j^{(1)}(t) = W_j(t_n)$). The updated spatial factors $K_j(t, x) = \sum_{i=1}^r X_i^{(1)}(t) S_{ij}^{(1)}(t)$ evolve according to:
        \begin{align*}
            \partial_t K_j(t, x) = \langle F(t, \sum_{j=1}^r K_j(t, x) W_j(t_n, \xi)), W_j(t_n, \xi) \rangle_{\xi}, \quad j=1,\dots,r.
        \end{align*}
        \item \textbf{S-step}: Both $X_i^{(2)}$ and $W_j^{(2)}$ remain constant ($X_i^{(2)} = X_i^{(1)}(t_{n+1}), W_j^{(2)} = W_j(t_n)$). The core matrix $S_{ij}^{(2)}$ evolves according to:
        \begin{align*}
            \partial_t S_{ij}^{(2)} = -\langle F(t, \sum_{k,l=1}^r X_k^{(2)} S_{kl}^{(2)} W_l^{(2)}), X_i^{(2)} W_j^{(2)} \rangle_{x,\xi}, \quad i,j=1,\dots,r.
        \end{align*}
        \item \textbf{L-step}: $X_i^{(3)}$ remains constant ($X_i^{(3)} = X_i^{(1)}(t_{n+1})$). The updated random factors $L_i(t, \xi) = \sum_{j=1}^r S_{ij}^{(3)}(t) W_j^{(3)}(t)$ evolve according to:
        \begin{align*}
            \partial_t L_i(t, \xi) = \langle F(t, \sum_{i=1}^r X_i^{(3)}(x) L_i(t, \xi)), X_i^{(3)}(x) \rangle_{x}, \quad j=1,\dots,r.
        \end{align*}
    \end{enumerate}
\end{lemma}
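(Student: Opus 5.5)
Each substep will be handled the same way: propose the factorized ansatz the statement claims, substitute it into the subproblem, and check that the subproblem's right-hand side is reproduced exactly. Uniqueness of solutions of the linear evolution (F is linear in \psi) then identifies the ansatz with \psi^{(k)}. The one structural fact used throughout is that each projector in \eqref{eq: projector_operator_form} acts on a single variable, with fixed spanning functions.

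\textbf{K-step.} Take the ansatz $\psi^{(1)}(t)=\sum_j K_j(t,x)W_j(t_n,\xi)$, with $W_j(t_n)$ held fixed, and use $P_{\mathcal{V}_W}$ built from these same fixed $W_j(t_n)$. By \eqref{eq: projector_expression}, $P_{\mathcal{V}_W}g=\sum_j\langle g,W_j\rangle_\xi W_j$, so the right-hand side of (I) lies in $\operatorname{span}\{W_j(t_n)\}$ with coefficients $\langle F(\psi^{(1)}),W_j(t_n)\rangle_\xi$. The left-hand side is $\sum_j\partial_tK_jW_j(t_n)$. The $W_j$ are orthonormal, so taking the $\langle\cdot,W_l\rangle_\xi$ inner product of both sides gives exactly the stated equation for $K_l$.

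The step that needs justification is that $\mathcal{V}_W$ does not move during (I). This holds because the ansatz keeps the $\xi$-range of $\psi^{(1)}$ inside $\operatorname{span}\{W_j(t_n)\}$ for all $t$. Conversely, since every vector field of the form $P_{\mathcal{V}_W}(\cdot)$ with this fixed $\mathcal{V}_W$ is tangent to this set, uniqueness of solutions of (I) shows the true solution stays of this form. At $t_{n+1}$ a QR factorization $K(t_{n+1})=X^{(1)}S^{(1)}$ produces the orthonormal $X_i^{(1)}(t_{n+1})$.

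\textbf{S-step.} Fix $X^{(2)}=X^{(1)}(t_{n+1})$ and $W^{(2)}=W(t_n)$, and take $\psi^{(2)}=\sum X_kS^{(2)}_{kl}W_l$. The operator $P_{\mathcal{V}_W}P_{\mathcal{V}_X}g=\sum_{ij}\langle g,X_iW_j\rangle_{x,\xi}X_iW_j$ maps into the span of the products $X_iW_j$. Testing against $X_iW_j$ and using the orthonormality \eqref{eq: orthonormal basis} gives the stated ODE for $S^{(2)}$, including the minus sign. The initial datum also fits the ansatz, since $\psi^{(1)}(t_{n+1})=\sum X_i^{(1)}S^{(1)}_{ij}W_j(t_n)$ by the QR step.

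\textbf{L-step.} This is symmetric to the K-step with the roles of $x$ and $\xi$ exchanged. Fix $X^{(3)}=X^{(1)}(t_{n+1})$ and write $\psi^{(3)}=\sum_iX_i^{(3)}L_i(t,\xi)$. Then $P_{\mathcal{V}_X}g=\sum_i\langle g,X_i\rangle_xX_i$, and projecting onto each $X_i$ gives the $L_i$ equation. The ``$j=1,\dots,r$'' in the statement should read $i=1,\dots,r$, and the summation index inside $F$ should be renamed, for example to $k$.

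The main obstacle is conceptual rather than computational. One has to make precise that, inside each substep, the projector is frozen at the subspace of the substep's initial datum. In the continuous-time projection \eqref{eq: projected_F} the projector depends on $\psi_A(t)$, so the argument needs an invariance statement: $P_{\mathcal{V}_W}$ built from the current $\psi^{(1)}(t)$ coincides with the frozen one as long as the range is preserved. I will prove this by the uniqueness argument above, which also covers the degenerate case where $K$ temporarily loses rank, because the frozen-projector ODE is globally well posed.
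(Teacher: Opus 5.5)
Your proof is correct, but it argues in the opposite direction from the paper's.

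\textbf{The paper's route.} The paper writes $\psi^{(1)}=\sum_j K_j^{(1)}W_j^{(1)}$ with a priori time-dependent factors and uses the current-basis form of $P_{\mathcal{V}_W}$. It takes the inner product with $W_k^{(1)}$ and uses the gauge condition $\langle \partial_t W_j^{(1)},W_k^{(1)}\rangle_\xi=0$ to isolate $\partial_t K_k^{(1)}$. Substituting back leaves $\sum_j K_j^{(1)}\partial_t W_j^{(1)}=0$, and orthonormality of $X^{(1)}$ together with nonsingularity of $S^{(1)}$ then forces $\partial_t W_j^{(1)}=0$. The S- and L-steps are handled the same way. This is a derivation of necessary conditions: it assumes a smooth gauged factorization with invertible core exists throughout the substep, but it needs no uniqueness theorem.

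\textbf{Your route.} You freeze the basis, check that the ansatz satisfies the substep equation, and conclude by uniqueness, which is the Lubich--Oseledets style of argument. You never use the gauge condition or $S^{-1}$. So for the frozen-projector ODE your argument survives rank deficiency of $K$ or $S^{(2)}$, which is exactly the robustness the projector-splitting integrator is valued for.

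\textbf{The price.} You need a uniqueness statement. For the projector read off from the current state $\psi^{(1)}(t)$, which is how the paper's proof interprets the substep, the vector field is only locally Lipschitz on $\mathcal{M}_r$. Uniqueness, and hence agreement of your frozen solution with the paper's subproblem, therefore holds only while $K$ (respectively $S^{(2)}$, $L$) keeps full rank. Your remark that the argument ``covers the degenerate case'' is thus really a choice to take the frozen ODE as the definition of the substep there. That choice is reasonable, since it is what the projector-splitting algorithm actually implements, but it is not an equivalence with the state-dependent formulation.

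Your corrections of the index typos in the K- and L-step formulas are right.
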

\begin{proof}
    \textit{Step 1: The K-step.} 
    Let $\psi^{(1)} = \sum_{j=1}^r K_j^{(1)} W_j^{(1)}$, where the spatial factors are defined as $K_j^{(1)} = \sum_{i=1}^r X_i^{(1)} S_{ij}^{(1)}$. The evolution equation for this step is given by:
    \begin{align*}
        \partial_t \psi^{(1)} = P_{\mathcal{V}_W} F(\psi^{(1)}) = \sum_{j=1}^r \langle F(\psi^{(1)}), W_j^{(1)} \rangle_{\xi} W_j^{(1)}.
    \end{align*} 
    Substituting the expression for $\psi^{(1)}$ into the above yields:
    \begin{align}\label{eq: proof_step1_expand}
        \sum_{j=1}^r \left( \partial_t K_j^{(1)} W_j^{(1)} + K_j^{(1)} \partial_t W_j^{(1)} \right) = \sum_{j=1}^r \langle F(\psi^{(1)}), W_j^{(1)} \rangle_{\xi} W_j^{(1)}.
    \end{align}
    Taking the inner product with $W_k^{(1)}$ on both sides, and invoking the orthonormal property $\langle W_j^{(1)}, W_k^{(1)} \rangle_{\xi} = \delta_{jk}$ along with the gauge condition $\langle \partial_t W_j^{(1)}, W_k^{(1)} \rangle_{\xi} = 0$, we obtain:
    \begin{align*}
        \partial_t K_k^{(1)} = \langle F(\psi^{(1)}), W_k^{(1)} \rangle_{\xi}.
    \end{align*}
    Substituting this back into \eqref{eq: proof_step1_expand} implies $\sum_{j=1}^r K_j^{(1)} \partial_t W_j^{(1)} = 0$. By writing $K_j^{(1)} = \sum_i X_i^{(1)} S_{ij}^{(1)}$ and taking the inner product with $X_i^{(1)}$, the orthonormality of $X_i^{(1)}$ and the nonsingularity of $S^{(1)}$ lead to $\partial_t W_j^{(1)} = 0$.

    \textit{Step 2: The S-step.} 
    For $\psi^{(2)} = \sum_{i,j=1}^r X_i^{(2)} S_{ij}^{(2)} W_j^{(2)}$, the evolution is governed by:
    \begin{align*}
        \partial_t \psi^{(2)} = -P_{\mathcal{V}_W} P_{\mathcal{V}_X} F(\psi^{(2)}) = -\sum_{i,j=1}^r \langle F(\psi^{(2)}), X_i^{(2)} W_j^{(2)} \rangle_{x,\xi} X_i^{(2)} W_j^{(2)}.
    \end{align*}
    Expanding the time derivative $\partial_t \psi^{(2)}$ gives:
    \begin{equation}\label{eq: proof_step2_expand}
    \begin{aligned}
        &\sum_{i,j=1}^r \left( \partial_t X_i^{(2)} S_{ij}^{(2)} W_j^{(2)} + X_i^{(2)} \partial_t S_{ij}^{(2)} W_j^{(2)} + X_i^{(2)} S_{ij}^{(2)} \partial_t W_j^{(2)} \right) \\
        =& -\sum_{i,j=1}^r \langle F(\psi^{(2)}), X_i^{(2)} W_j^{(2)} \rangle_{x,\xi} X_i^{(2)} W_j^{(2)}.
    \end{aligned}
    \end{equation}
    Taking the inner product with $X_k^{(2)} W_l^{(2)}$ and applying the gauge conditions $\langle \partial_t X_i, X_k \rangle_x = 0$ and $\langle \partial_t W_j, W_l \rangle_\xi = 0$, we isolate the core matrix evolution:
    \begin{align*}
        \partial_t S_{ij}^{(2)} = -\langle F(\psi^{(2)}), X_i^{(2)} W_j^{(2)} \rangle_{x,\xi}.
    \end{align*}
    Substituting this result into \eqref{eq: proof_step2_expand} yields $\sum_{i,j} (\partial_t X_i^{(2)} S_{ij}^{(2)} W_j^{(2)} + X_i^{(2)} S_{ij}^{(2)} \partial_t W_j^{(2)}) = 0$. Using the orthonormality and nonsingularity arguments as in Step 1, we conclude $\partial_t X_i^{(2)} = 0$ and $\partial_t W_j^{(2)} = 0$.

    \textit{Step 3: The L-step.} 
    Let $\psi^{(3)} = \sum_{i=1}^r X_i^{(3)} L_i^{(3)}$ with $L_i^{(3)} = \sum_{j=1}^r S_{ij}^{(3)} W_j^{(3)}$. The governing equation is:
    \begin{align*}
        \partial_t \psi^{(3)} = P_{\mathcal{V}_X} F(\psi^{(3)}) = \sum_{i=1}^r \langle F(\psi^{(3)}), X_i^{(3)} \rangle_{x} X_i^{(3)}.
    \end{align*}
    Repeating the projection procedure by taking the inner product with $X_k^{(3)}$, we find:
    \begin{align*}
        \partial_t L_i^{(3)} = \langle F(\psi^{(3)}), X_i^{(3)} \rangle_{x}.
    \end{align*}
    Similarly, substituting this back into the evolution of $\psi^{(3)}$ and utilizing the orthonormality of $W_j$ and nonsingularity of $S^{(3)}$, we obtain $\partial_t X_i^{(3)} = 0$, which concludes the proof.
\end{proof}

\subsection{Numerical Implementation}

We now discuss the numerical solution of the three sub-problems derived in the previous lemma. By substituting the specific form of the operator $F(\psi_A)$ and utilizing the fact that certain factors remain stationary in each substep, the equations can be simplified significantly.

\paragraph{K-step} Since $\partial_t W_j^{(1)}=0$, the evolution of the spatial factors $K_j(t,x)$ is governed by a matrix-valued Schrödinger equation:
\begin{align*}
    \partial_t K_j(t,x) = \frac{1}{\mathrm{i}\varepsilon} \sum_{l=1}^r \left( -\frac{\varepsilon^2}{2} \delta_{jl} \Delta_x + \mathcal{C}_{jl}^{(1)} \right) K_l(t,x), 
\end{align*}
where the coefficient matrix $\mathcal{C}_{jl}^{(1)} = \langle V(x,\xi) W_l^{(1)},W_j^{(1)}\rangle_{\xi}$ is precomputed. This system can be efficiently integrated using the time-splitting spectral method \cite{baoTimeSplittingSpectralApproximations2002}.

\paragraph{S-step} In this step, the bases $\{X_i^{(2)}\}$ and $\{W_j^{(2)}\}$ are fixed. The evolution of the core matrix $S(t)$ reduces to a low-dimensional system of ODEs:
\begin{align*}
    \dot{S}_{ij}(t) = -\frac{1}{\mathrm{i}\varepsilon} \sum_{l,k=1}^r \left[ -\frac{\varepsilon^2}{2} \mathcal{C}_{il}^{(21)} \delta_{kj} + \mathcal{D}_{iljk}^{(22)} \right] S_{lk}(t),
\end{align*}
where $\mathcal{C}_{il}^{(21)} = \langle \Delta_x X_l^{(2)}, X_i^{(2)} \rangle_x$ and $\mathcal{D}_{iljk}^{(22)} = \langle V X_l^{(2)} W_k^{(2)}, X_i^{(2)} W_j^{(2)} \rangle_{x,\xi}$. This system can be solved using standard ODE integrators, such as fourth-order Runge-Kutta or exponential integrators.

\paragraph{L-step} With $X_i^{(3)}$ constant, the random factors $L_i(t, \xi)$ satisfy:
\begin{align*}
    \partial_t L_i(t,\xi) = \frac{1}{\mathrm{i}\varepsilon} \sum_{l=1}^r \left( -\frac{\varepsilon^2}{2} \mathcal{C}_{il}^{(21)} + \mathcal{V}_{il}(t, \xi) \right) L_l(t,\xi),
\end{align*}
where $\mathcal{V}_{il}(t, \xi) = \langle V(x,\xi) X_l^{(3)}, X_i^{(3)} \rangle_x$. For each realization of $\xi$, this constitutes a small-scale ODE.

The preceding lemma and the subsequent discussion establish the foundation for the projector-splitting integrator tailored for the semiclassical Schrödinger equation with uncertainties, as detailed in \cref{alg: PS_integrator_Schrodinger}. This integrator is particularly robust as it maintains the low-rank structure while providing a first-order accurate approximation in time.

Similarly, we extend the unconventional integrator to this stochastic semiclassical setting, presented in \cref{alg: UI_integrator_Schrodinger}. Compared to the projector-splitting method, the unconventional integrator offers the advantage of parallelizing the updates of the spatial and random bases, which significantly enhances computational efficiency in large-scale simulations.

\begin{algorithm}[htbp]
    \caption{The projector splitting integrator for the semiclassical Schrödinger equation with uncertainties}
    \label{alg: PS_integrator_Schrodinger}
    \begin{algorithmic}[1]
        \STATE \textbf{Input}: Initial factors $X^0, S^0, W^0$, time step $\Delta t$, total steps $N$, spatial grid size $h$.
        \FOR {$n=0, 1, \dots, N-1$}
            \STATE 1. \textbf{K-step}: Solve the following equation from $t_n$ to $t_{n+1}$ with $K_j(t_n) = \sum_{i=1}^r X_i^n S_{ij}^n$:
            \begin{equation*}
                \partial_t K_j(t,x) = \frac{1}{\mathrm{i}\varepsilon} \sum_{l=1}^r \left( -\frac{\varepsilon^2}{2} \delta_{jl} \Delta_x + \mathcal{C}_{jl}^{(1)} \right) K_l(t,x).
            \end{equation*}
            Perform the Gram-Schmidt procedure to obtain $K_j(t_{n+1}) = \sum_{i=1}^r X_i^{n+1} \widehat{S}_{ij}^{n+1}$.
            
            \STATE 2. \textbf{S-step}: Solve the following equation from $t_n$ to $t_{n+1}$ with $S_{ij}(t_n) = \widehat{S}_{ij}^{n+1}$:
            \begin{equation*}
                \dot{S}_{ij}(t) = -\frac{1}{\mathrm{i}\varepsilon} \sum_{l,k=1}^r \left[ -\frac{\varepsilon^2}{2} \mathcal{C}_{il}^{(21)} \delta_{kj} + \mathcal{D}_{iljk}^{(22)} \right] S_{lk}(t).
            \end{equation*}
            Set $\widetilde{S}_{ij}^{n} = S_{ij}(t_{n+1})$.
            
            \STATE 3. \textbf{L-step}: Solve the following equation from $t_n$ to $t_{n+1}$ with $L_i(t_n) = \sum_{j=1}^r \widetilde{S}_{ij}^{n} W_j^n$:
            \begin{equation*}
                \partial_t L_i(t,\xi) = \frac{1}{\mathrm{i}\varepsilon} \sum_{l=1}^r \left( -\frac{\varepsilon^2}{2} \mathcal{C}_{il}^{(21)} + \mathcal{V}_{il}(t, \xi) \right) L_l(t,\xi).
            \end{equation*}
            Perform the Gram-Schmidt procedure to obtain $L_i(t_{n+1}) = \sum_{j=1}^r S_{ij}^{n+1} W_j^{n+1}$.
        \ENDFOR
        \STATE \textbf{Output}: $\psi^N = \sum_{i,j=1}^r X_i^N S_{ij}^N W_j^N$.
    \end{algorithmic}
\end{algorithm}

\begin{algorithm}[htbp]
    \caption{The unconventional integrator for the semiclassical Schrödinger equation with uncertainties}
    \label{alg: UI_integrator_Schrodinger}
    \begin{algorithmic}[1]
        \STATE \textbf{Input}: Initial factors $X^0, S^0, W^0$, time step $\Delta t$, total steps $N$, spatial grid size $h$.
        \FOR {$n=0, 1, \dots, N-1$}
            \STATE 1. \textbf{Update bases in parallel}:
            \begin{itemize}
                \item \textbf{K-step}: Solve for $K_j(t,x)$ from $t_n$ to $t_{n+1}$ with initial condition $K_j(t_n) = \sum_{i=1}^r X_i^n S_{ij}^n$. 
                Perform Gram-Schmidt: $K_j(t_{n+1}) = \sum_{i=1}^r X_i^{n+1} \widehat{S}_{ij}^{n+1}$, and compute $a_{ki} = \langle X_k^n, X_i^{n+1} \rangle_x$.
                
                \item \textbf{L-step}: Solve for $L_i(t,\xi)$ from $t_n$ to $t_{n+1}$ with initial condition $L_i(t_n) = \sum_{j=1}^r S_{ij}^n W_j^n$. 
                Perform Gram-Schmidt: $L_i(t_{n+1}) = \sum_{j=1}^r \widetilde{S}_{ij}^{n+1} W_j^{n+1}$, and compute $b_{lj} = \langle W_l^n, W_j^{n+1} \rangle_\xi$.
            \end{itemize}
            
            \STATE 2. \textbf{S-step (Core Matrix Update)}:
            Solve for $S_{ij}(t)$ from $t_n$ to $t_{n+1}$ with the updated projection, using the initial condition $S_{ij}(t_n) = \sum_{k,l=1}^r a_{ki} S_{kl}^n b_{lj}$:
            \begin{equation*}
                \dot{S}_{ij}(t) = -\frac{1}{\mathrm{i}\varepsilon} \sum_{l,k=1}^r \left[ -\frac{\varepsilon^2}{2} \mathcal{C}_{il}^{(21)} \delta_{kj} + \mathcal{D}_{iljk}^{(22)} \right] S_{lk}(t).
            \end{equation*}
            Set $S_{ij}^{n+1} = S_{ij}(t_{n+1})$.
        \ENDFOR
        \STATE \textbf{Output}: $\psi^N = \sum_{i,j=1}^r X_i^N S_{ij}^N W_j^N$.
    \end{algorithmic}
\end{algorithm}

\subsection{Computational and storage complexity}

In this subsection, we analyze the computational and storage complexity of the DLR method per time step. Let $N_x$ denote the number of spatial grid points and $N_{\xi}$ be the number of quadrature points in the random space. The DLR rank is denoted by $r$, while $K = \binom{\text{deg}+n}{n}$ represents the number of basis functions in the stochastic Galerkin (SG) method.

\paragraph{Computational Complexity:}
Under the assumption that the potential is separable, i.e., $V(\bm{x}, \bm{\xi}) = V_1(\bm{x})V_2(\bm{\xi})$, the costs for the DLR substeps are as follows:
\begin{itemize}
    \item \textbf{K-step}: We solve a matrix-valued Schrödinger equation for the spatial factors $K_j(t, \bm{x})$. Using the time-splitting spectral method, the evolution requires $O(r N_x \log N_x)$ operations. The pre-computation of the coefficients $c_{jl}^{(11)}$ and $c_{jl}^{(12)}$ involves integrals over the random space $\bm{\xi}$, which scales as $O(r^2 N_{\xi})$.
    \item \textbf{S-step}: This step evolves the $r \times r$ core matrix $S(t)$ by solving a system of ordinary differential equations (ODEs). Due to the separability of the potential, the construction of the coefficient matrices $c_{il}^{(21)}$ and $c_{iljk}^{(22)}$ only requires independent spatial and random space integrations, reducing the complexity to $O(r^2 N_x + r^2 N_{\xi})$. The cost of the ODE solver itself is negligible, scaling as $O(r^3)$ or $O(r^4)$ per step.
    \item \textbf{L-step}: We solve $N_{\xi}$ independent ODEs for the random factors $L_i(t, \bm{\xi})$. The evolution scales as $O(N_{\xi} r^2)$. The pre-computation of the coefficients $c_{il}^{(32)}(\bm{\xi})$ across the random grid requires $O(r^2 N_x + N_{\xi} r^2)$.
\end{itemize}

\paragraph{Storage Complexity:}
The storage requirements highlight a significant advantage of the DLR approach. The DLR method only needs to store the factors $\{X_i\}_{i=1}^r$, $S$, and $\{W_j\}_{j=1}^r$, leading to a memory footprint of $O(r N_x + r N_{\xi} + r^2)$. In contrast:
\begin{itemize}
    \item The \textbf{stochastic collocation (SC)} method, if storing the full wave function $\psi(t, x, \xi)$, requires $O(N_x N_{\xi})$ storage.
    \item The \textbf{stochastic Galerkin (SG)} method requires $O(K N_x)$ to store the spatial coefficients for all stochastic modes.
\end{itemize}
When $r \ll \min(N_{\xi}, K)$, the DLR method drastically reduces the memory burden, making it feasible for high-dimensional problems where full-grid storage is impossible.

\paragraph{Summary:}
In summary, the DLR method provides significantly lower computational and storage complexities compared to the SC and SG methods. By dynamically adapting the basis to capture the essential dynamics, the DLR method provides a computationally lean and memory-efficient alternative for resolving high oscillations in uncertain semiclassical systems.

\section{Numerical experiments}
\label{Sec: Numerical}

In this section, we evaluate the performance of the proposed dynamical low-rank (DLR) schemes for the uncertain Schrödinger equation. The numerical test suite is designed to verify three key properties:
\begin{itemize}
    \item \textbf{Efficiency:} Comparison against the Stochastic Galerkin (SG) method to demonstrate dimension reduction capabilities.
    \item \textbf{Robustness:} Sensitivity analysis with respect to the magnitude of uncertainty, governed by the parameter $\gamma$ (Example 3).
    \item \textbf{Generalization:} Applicability to non-polynomial potentials, including periodic and confinement regimes (Examples 4 \& 5).
\end{itemize}

\subsection{Physical quantities and error metrics}

In this subsection, we introduce the physical quantities and error norms used to evaluate the performance of the proposed DLR approximation. 

For the uncertain Schrödinger equation, the two primary physical observables of interest are the particle density $\rho$ and the current density $j$, defined as:
\begin{equation}
    \rho(t, x, \xi) = |\psi(t, x, \xi)|^2, \quad j(t, x, \xi) = \varepsilon \operatorname{Im} \left( \bar{\psi}(t, x, \xi) \nabla_x \psi(t, x, \xi) \right).
\end{equation}
Due to the presence of uncertainties, we focus on their expected values over the random space:
\begin{equation}
    \mathbb{E}[\rho](t, x) = \int_{\mathbb{R}^n} \rho(t, x, \xi) \pi(\xi) \mathrm{d}\xi, \quad \mathbb{E}[j](t, x) = \int_{\mathbb{R}^n} j(t, x, \xi) \pi(\xi) \mathrm{d}\xi.
\end{equation}

To quantify the accuracy of the DLR method, we define the relative $L^2$ error of the wave function as:
\begin{equation}
    e_{\psi}(t) := \frac{\| \psi_{\text{DLR}} - \psi_{\text{ref}} \|}{\| \psi_{\text{ref}} \|},
\end{equation}
where $\| \cdot \|$ denotes the $L^2$ norm in the full $(x, \xi)$ space defined in Section \ref{Sec: DLR_Schrodinger}. Additionally, we define the relative errors for the expectations of the density and current density as:
\begin{equation}
    e_{\mathbb{E}[\rho]}(t) := \frac{\| \mathbb{E}[\rho_{\text{DLR}}] - \mathbb{E}[\rho_{\text{ref}}] \|_{L^2(x)}}{\| \mathbb{E}[\rho_{\text{ref}}] \|_{L^2(x)}}, \quad 
    e_{\mathbb{E}[j]}(t) := \frac{\| \mathbb{E}[j_{\text{DLR}}] - \mathbb{E}[j_{\text{ref}}] \|_{L^2(x)}}{\| \mathbb{E}[j_{\text{ref}}] \|_{L^2(x)}}.
\end{equation}
Here, the subscript ``ref'' refers to the reference solution obtained by the time-splitting spectral method (TSSP) on a full tensor grid, while the subscript ``DLR'' denotes the numerical solution obtained by the proposed DLR schemes.

\subsection{Numerical results}
\textbf{Example 1 (Harmonic Oscillator with Random Potential).} 
We consider a harmonic oscillator system with a random potential. The physical parameters are set to $\varepsilon=0.05$, a harmonic trap $V(x)=0.6x^2$, and a constant random potential term $V(\xi)=1$. The initial condition is coupled to the random space via the momentum and position parameters:
\begin{equation*}
    \tilde{p}=0.7(\xi_1-\xi_2), \quad \tilde{q}=1+0.8(\xi_1+2\xi_2).
\end{equation*}
For the numerical discretization, we set the time step $dt=0.01$ and the final time $T=0.5$. The spatial domain is truncated to $[x_l, x_r] = [-4, 6]$ with $N_x=1000$ grid points. The random space is discretized using $N_{\xi_1}=100$ and $N_{\xi_2}=100$ collocation points.

Table \ref{tab:error_comparison_example1} presents a comparative analysis between the Stochastic Galerkin (SG) method and the DLR approximations. The SG method exhibits spectral convergence as the polynomial degree increases. However, achieving a relatively small error requires a high polynomial degree, leading to a basis size of $\binom{40 + 2}{2} = 861$. In contrast, the proposed DLR method achieves superior accuracy with a fixed rank of $r=46$. This represents a significant reduction in degrees of freedom, highlighting the efficiency of the low-rank approximation. Furthermore, the proposed unconventional integrator outperforms the standard DLR-splitting scheme by an order of magnitude in the $L^2$ error of $\psi$, as shown in the highlighted entries of Table \ref{tab:error_comparison_example1}. Figures \ref{fig:density_example1} and \ref{fig:Jdensity_example1} visually confirm that the DLR solution captures the detailed structure of the density and current density evolution.

\begin{table}[!htbp]
\centering
\caption{Error comparison for Example 1 with different ranks $r$.}
\label{tab:error_comparison_example1}
\renewcommand{\arraystretch}{1.2}
\scalebox{0.7}{
\begin{tabular}{l|c|c|c|c|c}
\toprule
\diagbox{Method}{Error} & $\psi$ & $\rho_{mean}$ & $\rho_{std}$ & $j_{mean}$ & $j_{std}$ \\
\midrule
Galerkin ($r=20$)   & $6.022 \times 10^{-1}$ & $4.669 \times 10^{-1}$ & $3.483 \times 10^{-1}$ & $3.858 \times 10^{-1}$ & $4.893 \times 10^{-1}$ \\
Galerkin ($r=30$)   & $3.430 \times 10^{-1}$ & $1.634 \times 10^{-1}$ & $1.693 \times 10^{-1}$ & $1.406 \times 10^{-1}$ & $2.491 \times 10^{-1}$ \\
Galerkin ($r=40$)   & $1.027 \times 10^{-1}$ & $1.388 \times 10^{-2}$ & $2.986 \times 10^{-2}$ & $1.837 \times 10^{-2}$ & $4.462 \times 10^{-2}$ \\
\midrule
DLR-splitting       & $7.812 \times 10^{-2}$ & $4.867 \times 10^{-3}$ & $7.101 \times 10^{-3}$ & $7.584 \times 10^{-3}$ & $1.218 \times 10^{-2}$ \\
Unconventional      & $\mathbf{3.391 \times 10^{-2}}$ & $\mathbf{1.461 \times 10^{-3}}$ & $\mathbf{4.287 \times 10^{-3}}$ & $\mathbf{2.379 \times 10^{-3}}$ & $\mathbf{6.779 \times 10^{-3}}$ \\
\bottomrule
\end{tabular}
}
\end{table}


\begin{figure}[H]
    \centering
    \begin{subfigure}{0.32\textwidth}
        \centering
        \includegraphics[width=\textwidth]{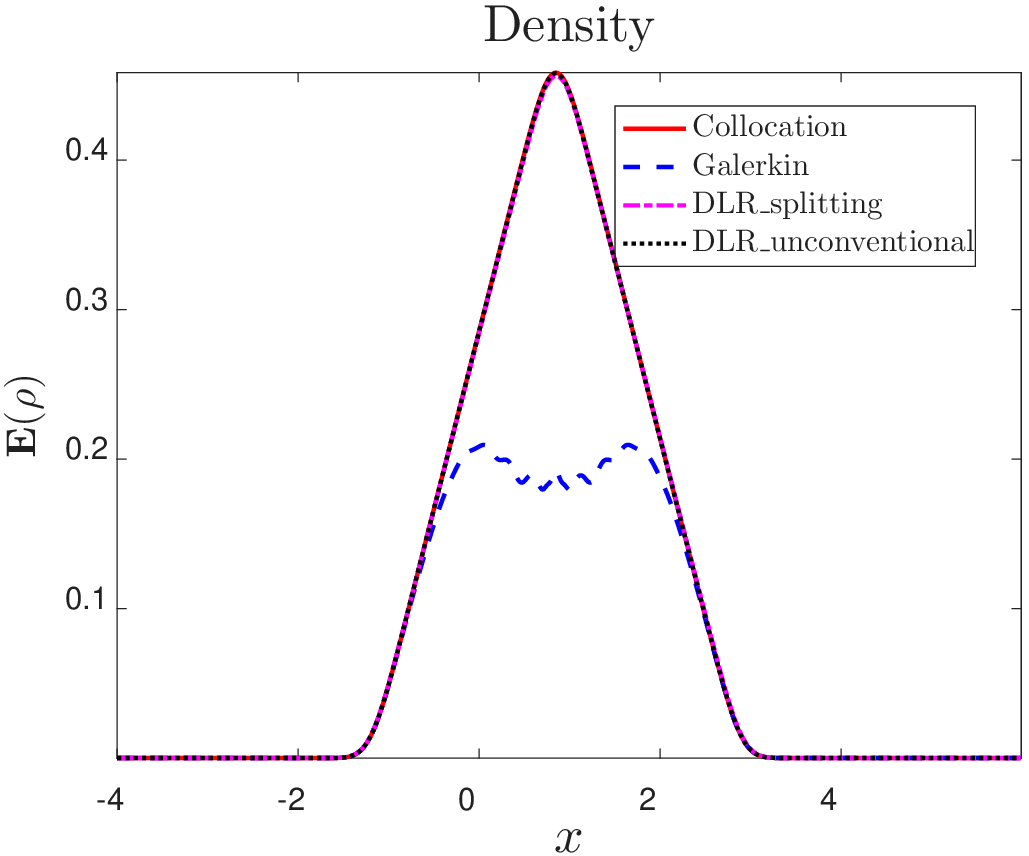}
        \caption{}
        \label{fig:subfig1_density_example1}
    \end{subfigure}
    \hfill
    \begin{subfigure}{0.32\textwidth}
        \centering
        \includegraphics[width=\textwidth]{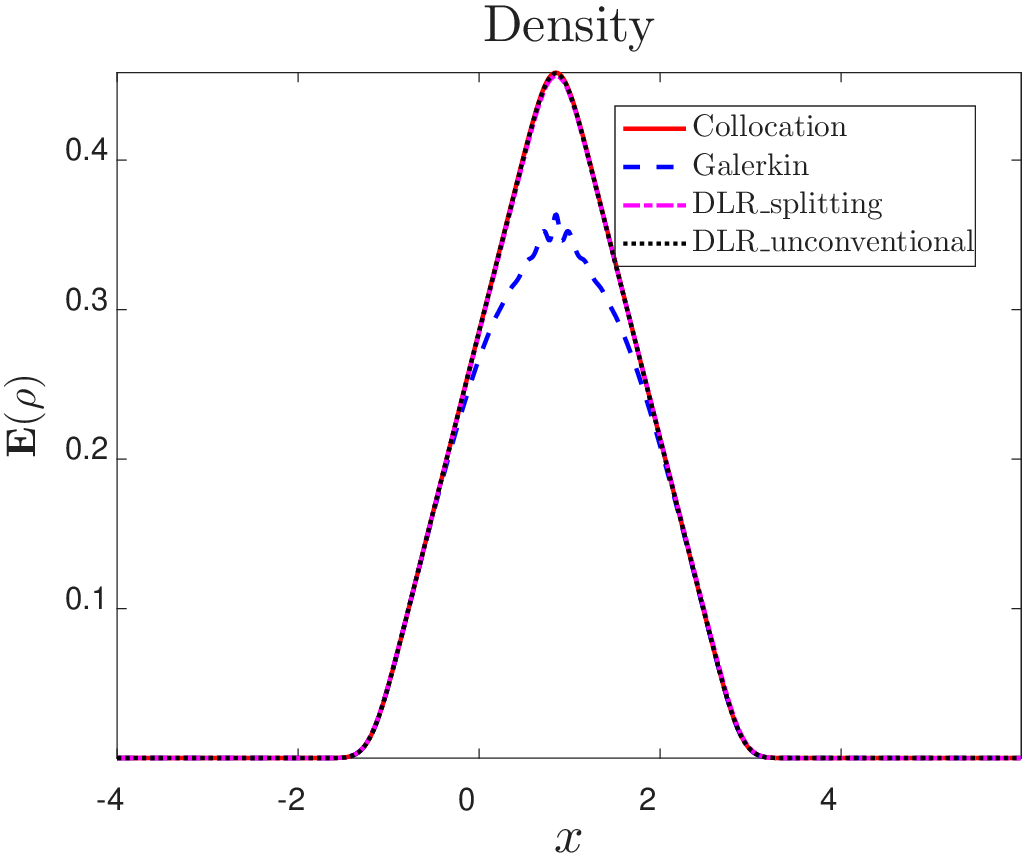}
        \caption{}
        \label{fig:subfig2_density_example1}
    \end{subfigure}
    \hfill
    \begin{subfigure}{0.32\textwidth}
        \centering
        \includegraphics[width=\textwidth]{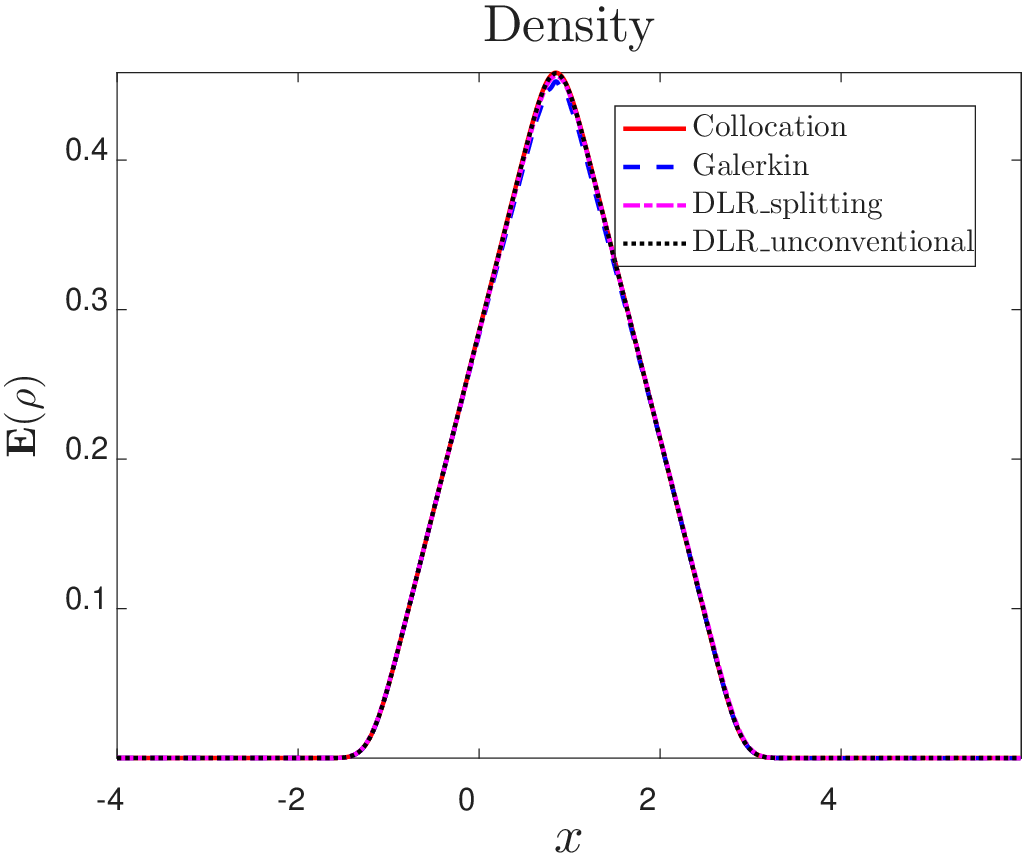}
        \caption{}
        \label{fig:subfig3_density_example1}
    \end{subfigure}
        \begin{subfigure}{0.32\textwidth}
        \centering
        \includegraphics[width=\textwidth]{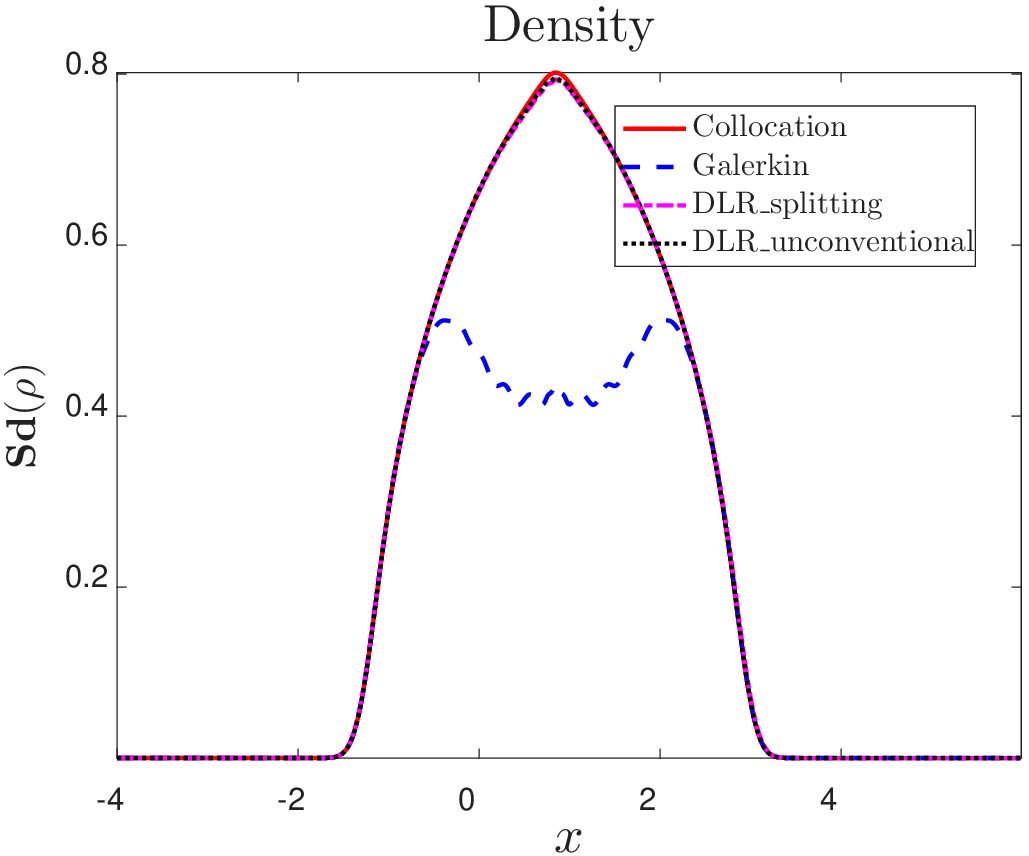}
        \caption{}
        \label{fig:subfig1_density_example1}
    \end{subfigure}
    \hfill
    \begin{subfigure}{0.32\textwidth}
        \centering
        \includegraphics[width=\textwidth]{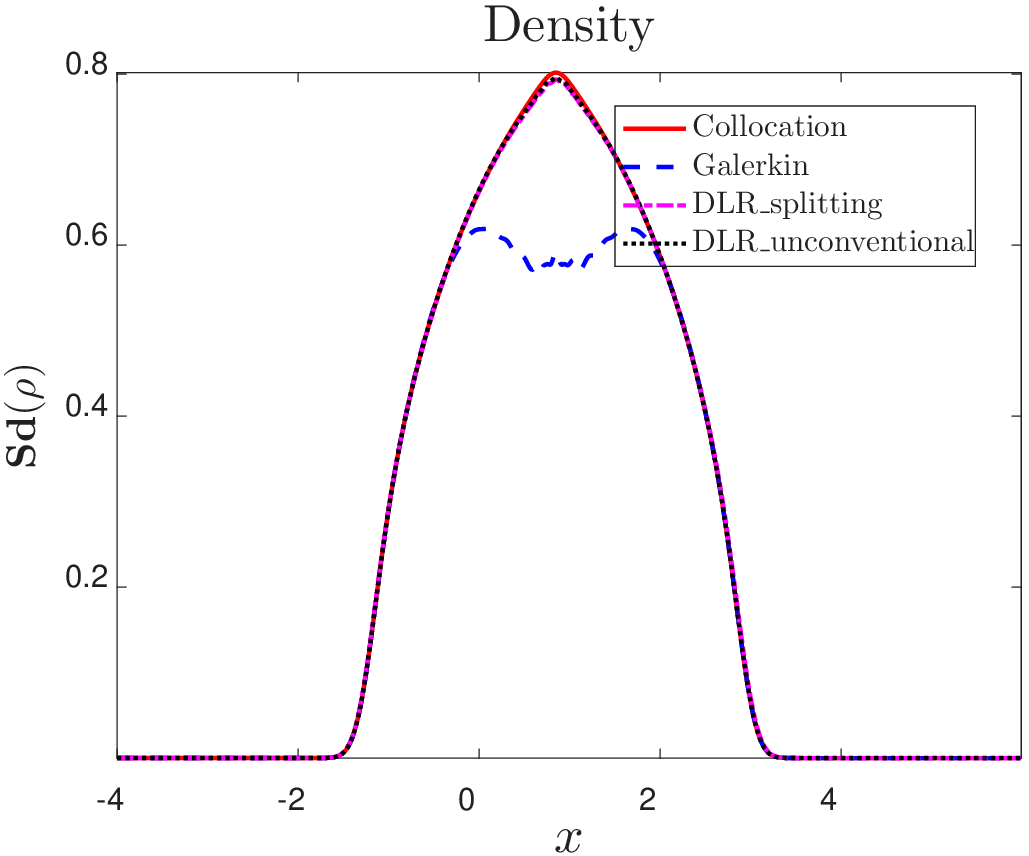}
        \caption{}
        \label{fig:subfig2_density_example1}
    \end{subfigure}
    \hfill
    \begin{subfigure}{0.32\textwidth}
        \centering
        \includegraphics[width=\textwidth]{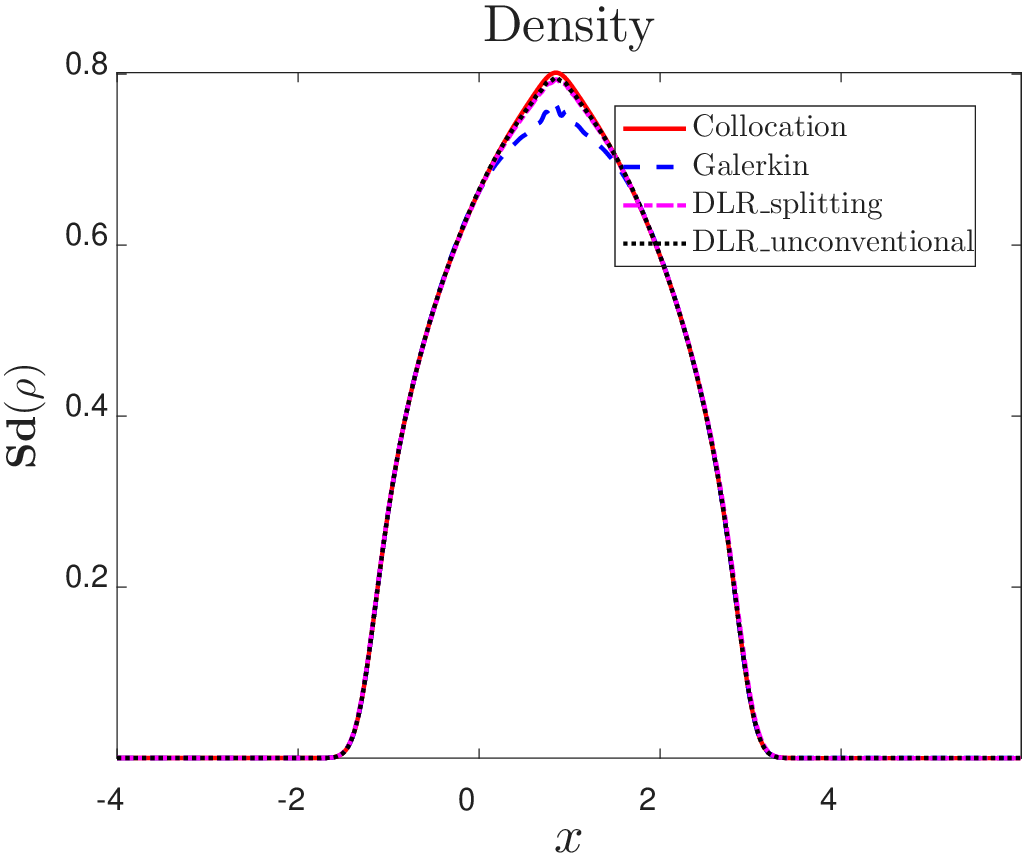}
        \caption{}
        \label{fig:subfig3_density_example1}
    \end{subfigure}
    \caption{Density in Example $1$. Left: degree of Galerkin $=20$; middle: degree of Galerkin $=30$; right: degree of Galerkin $=40$. The rank of the DLR method is $r=46$.}
    \label{fig:density_example1}
\end{figure}

\begin{figure}[H]
    \centering
    \begin{subfigure}{0.32\textwidth}
        \centering
        \includegraphics[width=\textwidth]{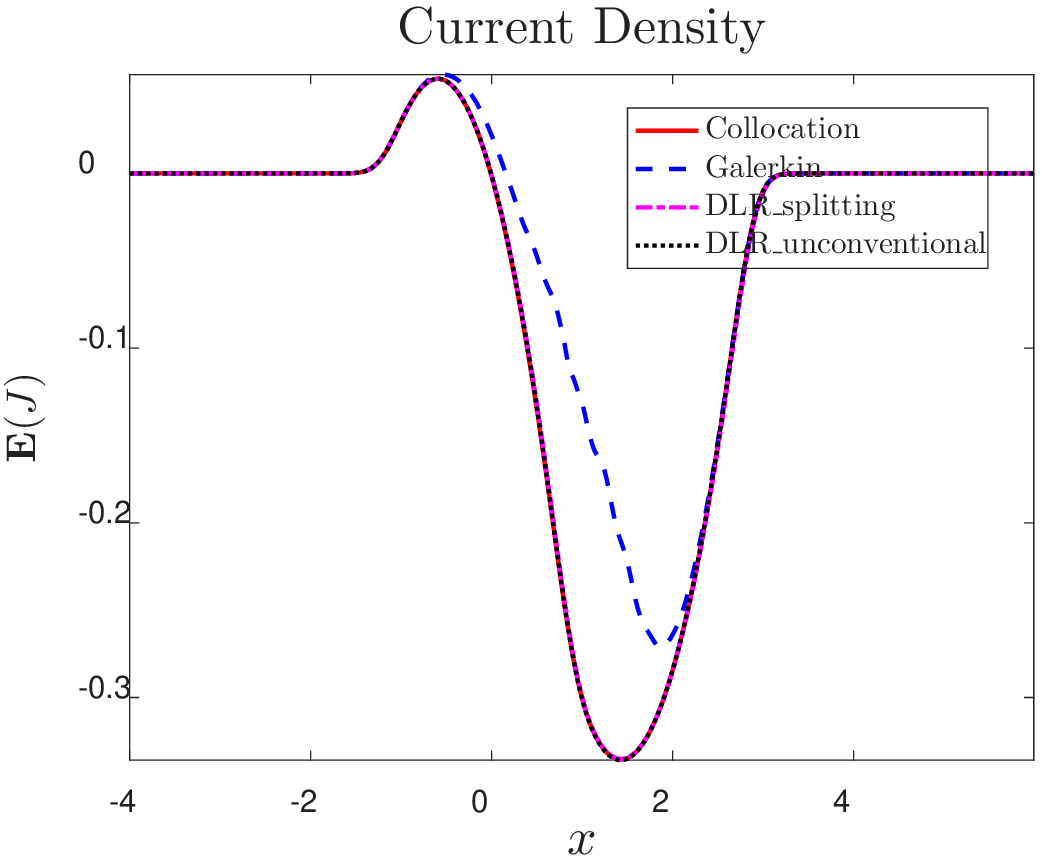}
        \caption{}
        \label{fig:subfig1_Jdensity_example1}
    \end{subfigure}
    \hfill
    \begin{subfigure}{0.32\textwidth}
        \centering
        \includegraphics[width=\textwidth]{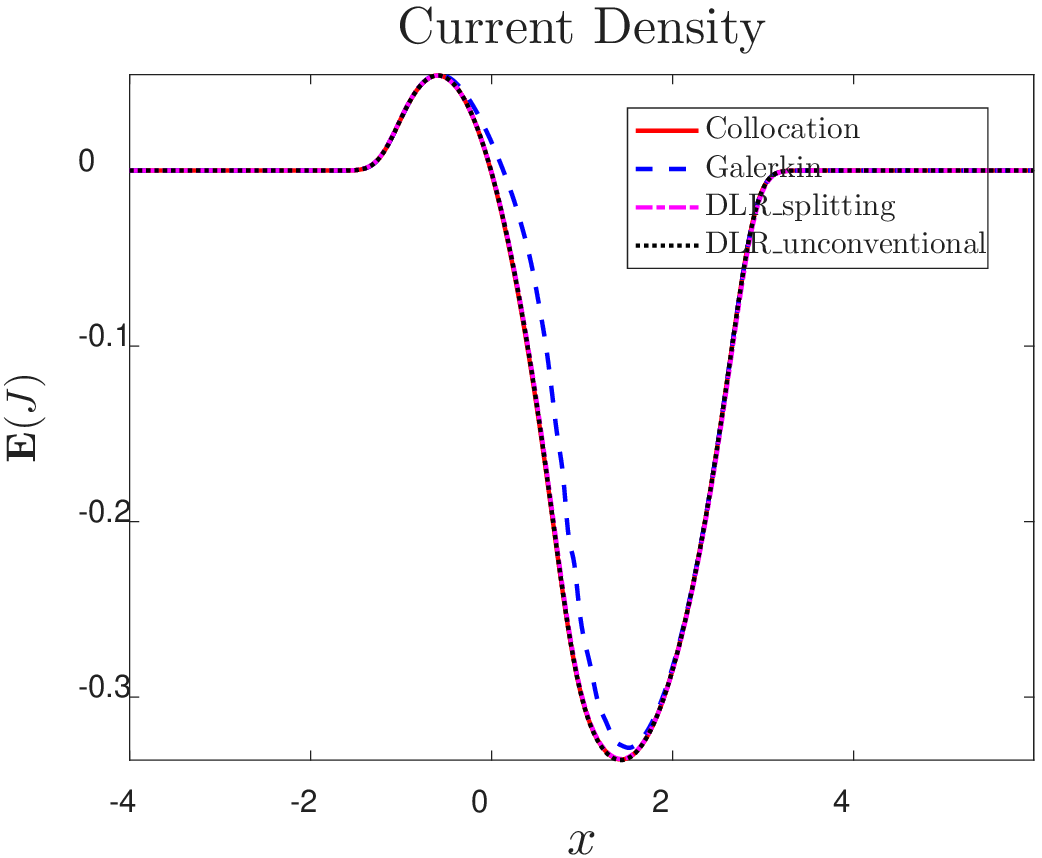}
        \caption{}
        \label{fig:subfig2_Jdensity_example1}
    \end{subfigure}
    \hfill
    \begin{subfigure}{0.32\textwidth}
        \centering
        \includegraphics[width=\textwidth]{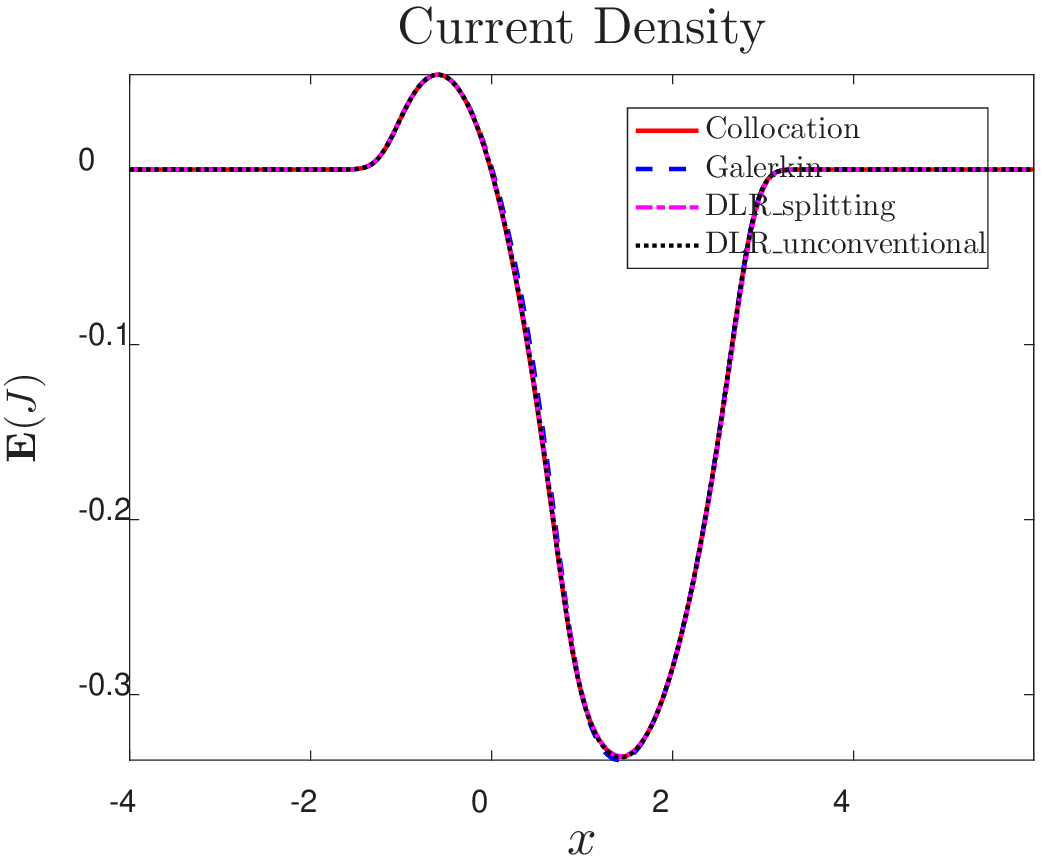}
        \caption{}
        \label{fig:subfig3_Jdensity_example1}
    \end{subfigure}
        \begin{subfigure}{0.32\textwidth}
        \centering
        \includegraphics[width=\textwidth]{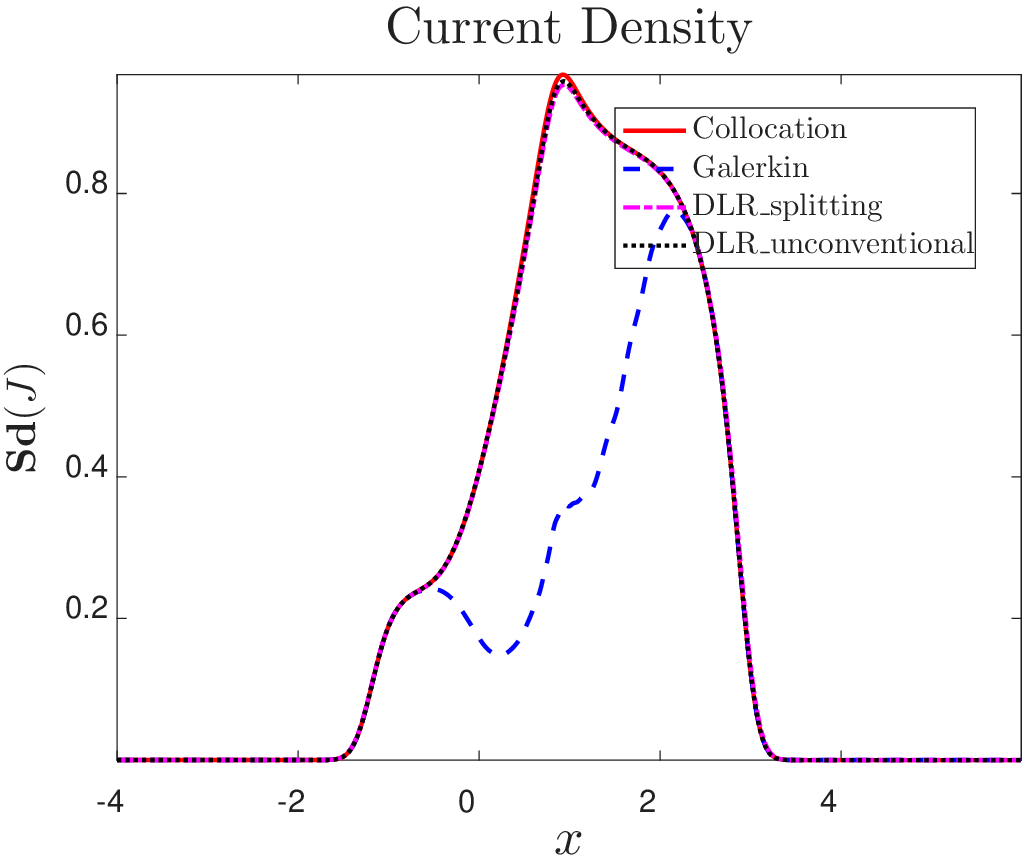}
        \caption{}
        \label{fig:subfig1_Jdensity_example1}
    \end{subfigure}
    \hfill
    \begin{subfigure}{0.32\textwidth}
        \centering
        \includegraphics[width=\textwidth]{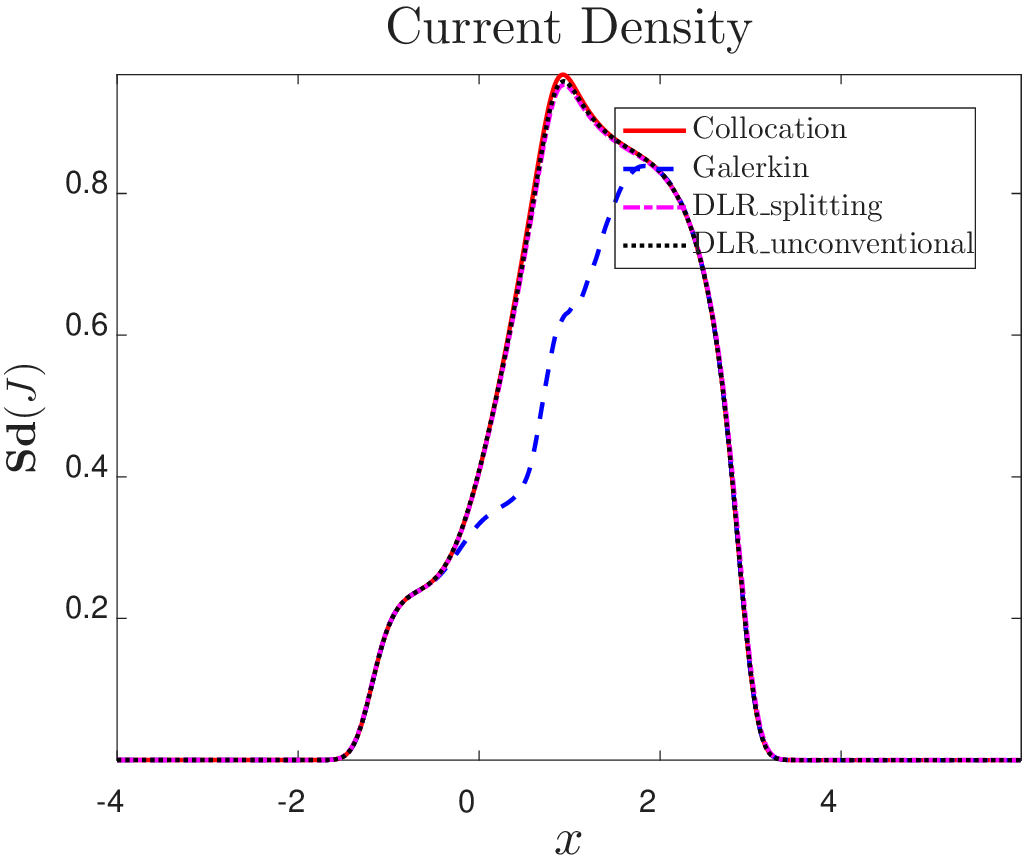}
        \caption{}
        \label{fig:subfig2_Jdensity_example1}
    \end{subfigure}
    \hfill
    \begin{subfigure}{0.32\textwidth}
        \centering
        \includegraphics[width=\textwidth]{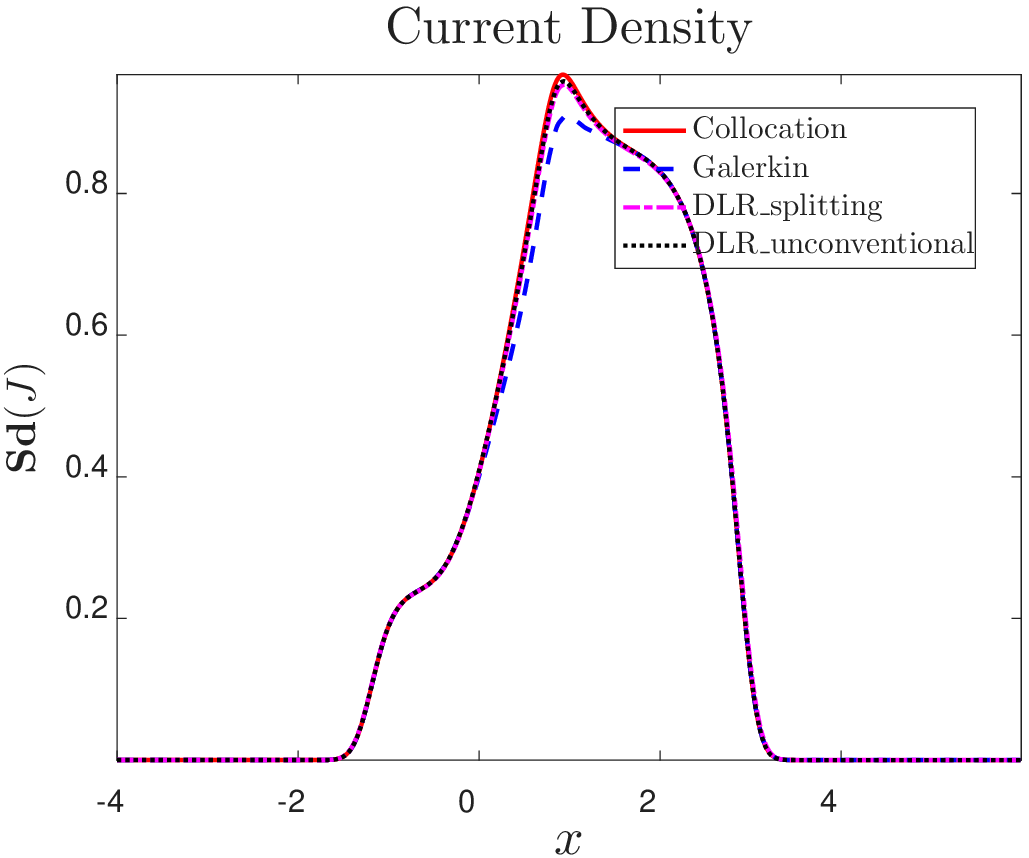}
        \caption{}
        \label{fig:subfig3_Jdensity_example1}
    \end{subfigure}
    \caption{Current density in Example $1$. Left: degree of Galerkin $=20$; middle: degree of Galerkin $=30$; right: degree of Galerkin $=40$. The rank of the DLR method is $r=46$.}
    \label{fig:Jdensity_example1}
\end{figure}

\textbf{Example 2 (Weakly Dependent Potential).} 
In this example, the potential includes a weak dependence on the random variables: $V(\xi)=1+0.1(\xi_1+2\xi_2)$. The remaining parameters are identical to Example 1: $\varepsilon=0.05, V(x)=0.6x^2, \tilde{p}=0.7(\xi_1-\xi_2), \tilde{q}=1+0.8(\xi_1+2\xi_2)$, with domain $[-4, 6]$ and discretization $N_x=1000, N_{\xi}=100$.

As shown in Table \ref{tab:error_comparison_example2}, the DLR method maintains its advantage over the stochastic Galerkin method. The proposed unconventional integrator consistently yields the lowest errors for both the wave function and the physical observables. The corresponding solutions are plotted in Figure \ref{fig:density_example2} (density) and Figure \ref{fig:Jdensity_example2} (current density). These figures confirm that the DLR approximation with rank $r=46$ accurately captures the mean and standard deviation of the solution dynamics, even with the stochastic perturbation in the potential.

\begin{table}[!htbp]
\centering
\caption{Error comparison for Example 2 with different ranks $r$.}
\label{tab:error_comparison_example2}
\renewcommand{\arraystretch}{1.2}
\scalebox{0.7}{
\begin{tabular}{l|c|c|c|c|c}
\toprule
\diagbox{Method}{Error} & $\psi$ & $\rho_{mean}$ & $\rho_{std}$ & $j_{mean}$ & $j_{std}$ \\
\midrule
Galerkin ($r=20$)   & $6.323 \times 10^{-1}$ & $4.662 \times 10^{-1}$ & $3.517 \times 10^{-1}$ & $3.659 \times 10^{-1}$ & $4.646 \times 10^{-1}$ \\
Galerkin ($r=30$)   & $3.698 \times 10^{-1}$ & $1.622 \times 10^{-1}$ & $1.701 \times 10^{-1}$ & $1.312 \times 10^{-1}$ & $2.276 \times 10^{-1}$ \\
Galerkin ($r=40$)   & $1.230 \times 10^{-1}$ & $1.372 \times 10^{-2}$ & $3.056 \times 10^{-2}$ & $1.706 \times 10^{-2}$ & $3.786 \times 10^{-2}$ \\
\midrule
DLR-splitting       & $1.000 \times 10^{-1}$ & $5.458 \times 10^{-3}$ & $8.046 \times 10^{-3}$ & $9.396 \times 10^{-3}$ & $1.277 \times 10^{-2}$ \\
Unconventional      & $\mathbf{4.518 \times 10^{-2}}$ & $\mathbf{2.743 \times 10^{-3}}$ & $\mathbf{5.313 \times 10^{-3}}$ & $\mathbf{4.369 \times 10^{-3}}$ & $\mathbf{7.539 \times 10^{-3}}$ \\
\bottomrule
\end{tabular}
}
\end{table}

\begin{figure}[H]
    \centering
        \begin{subfigure}{0.32\textwidth}
        \centering
        \includegraphics[width=\textwidth]{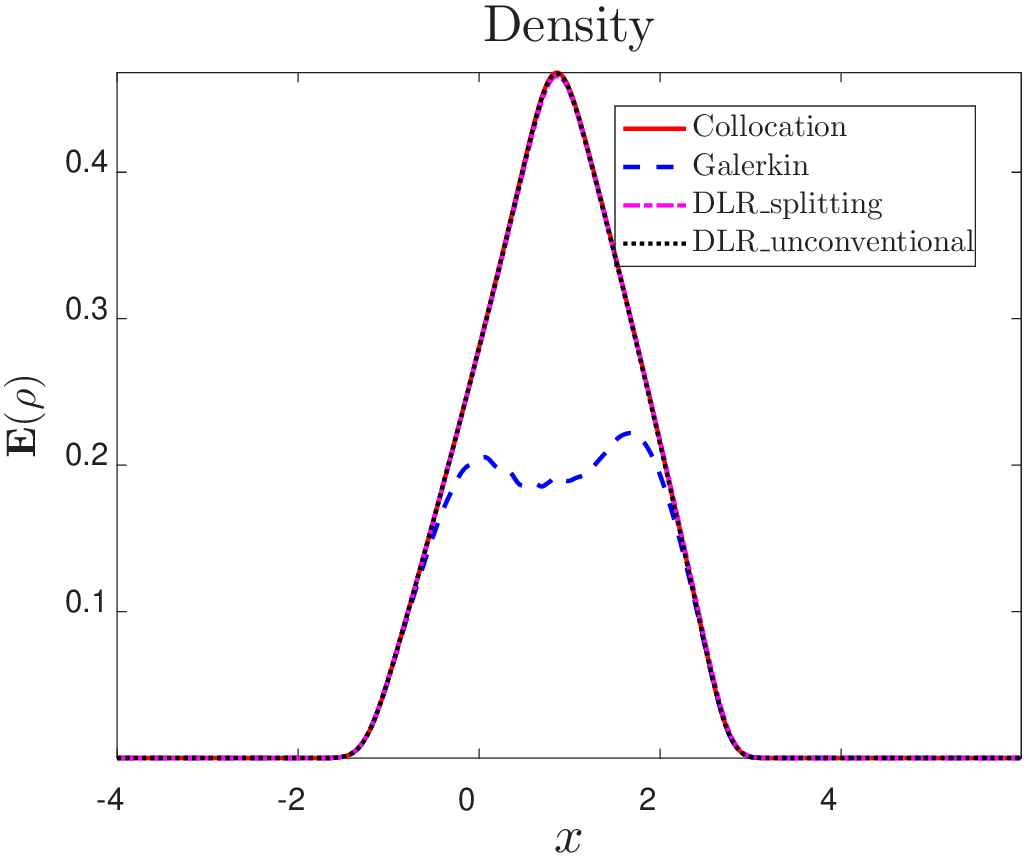}
        \caption{}
        \label{fig:subfig1_density_example1}
    \end{subfigure}
    \hfill
    \begin{subfigure}{0.32\textwidth}
        \centering
        \includegraphics[width=\textwidth]{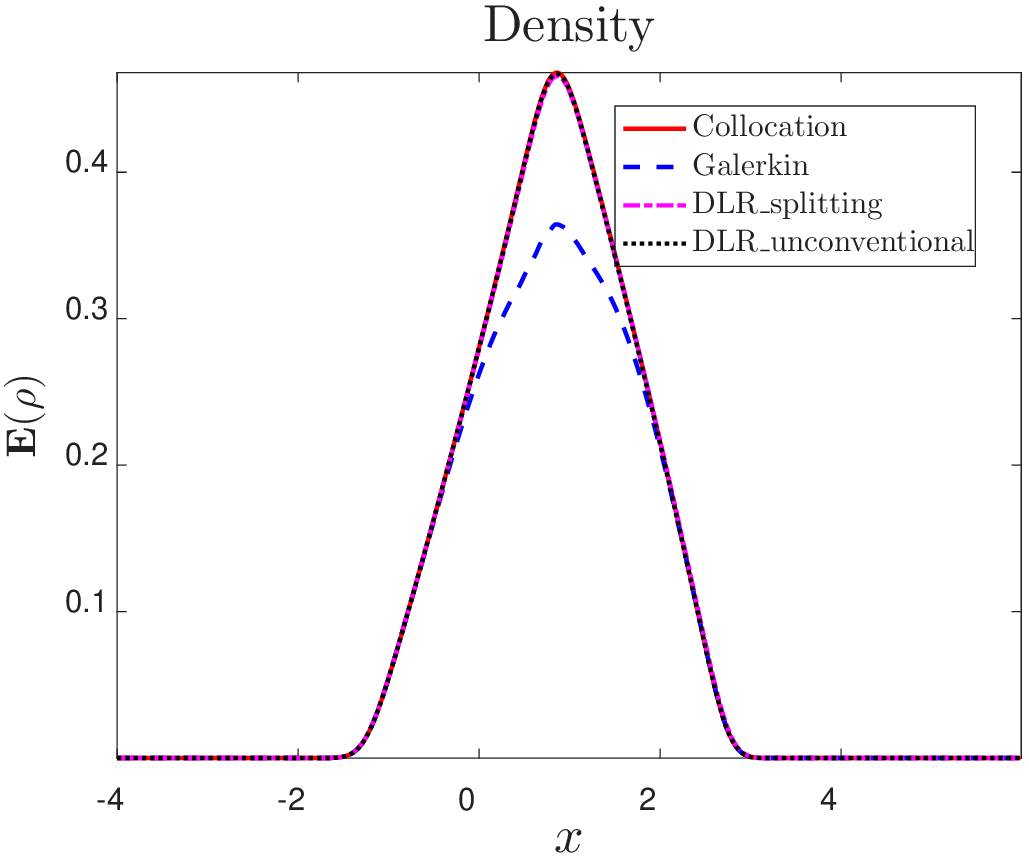}
        \caption{}
        \label{fig:subfig2_density_example1}
    \end{subfigure}
    \hfill
    \begin{subfigure}{0.32\textwidth}
        \centering
        \includegraphics[width=\textwidth]{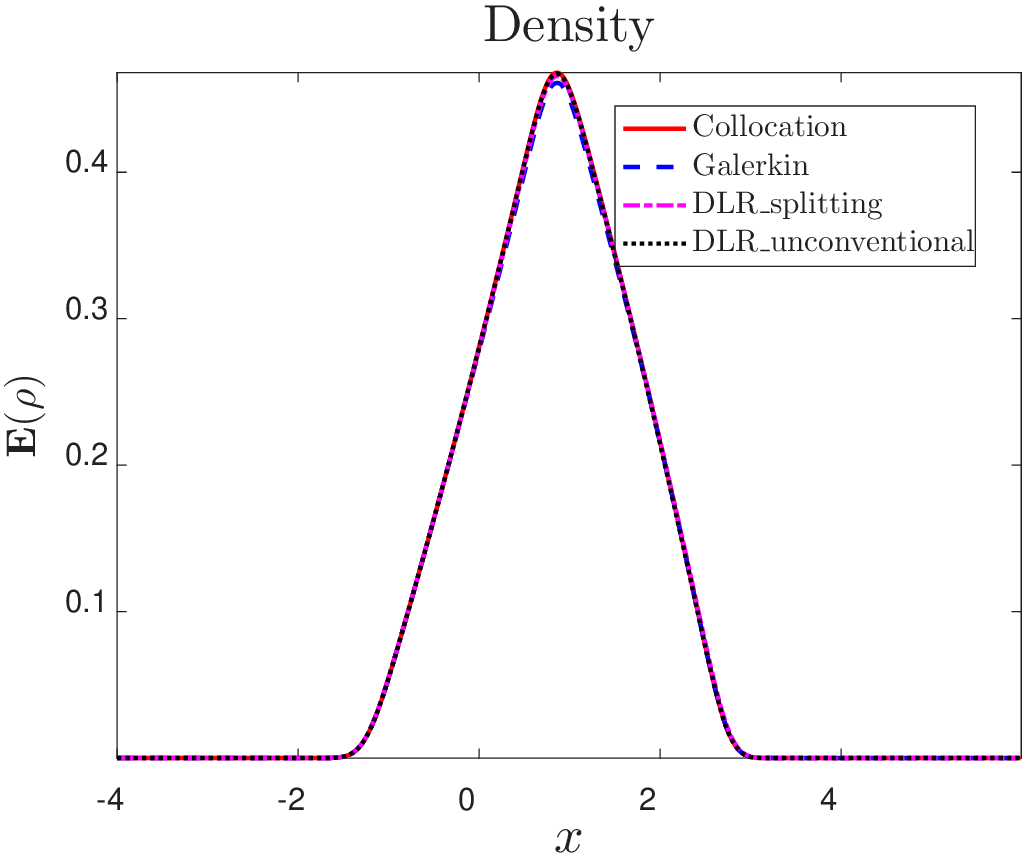}
        \caption{}
        \label{fig:subfig3_density_example1}
    \end{subfigure}
        \begin{subfigure}{0.32\textwidth}
        \centering
        \includegraphics[width=\textwidth]{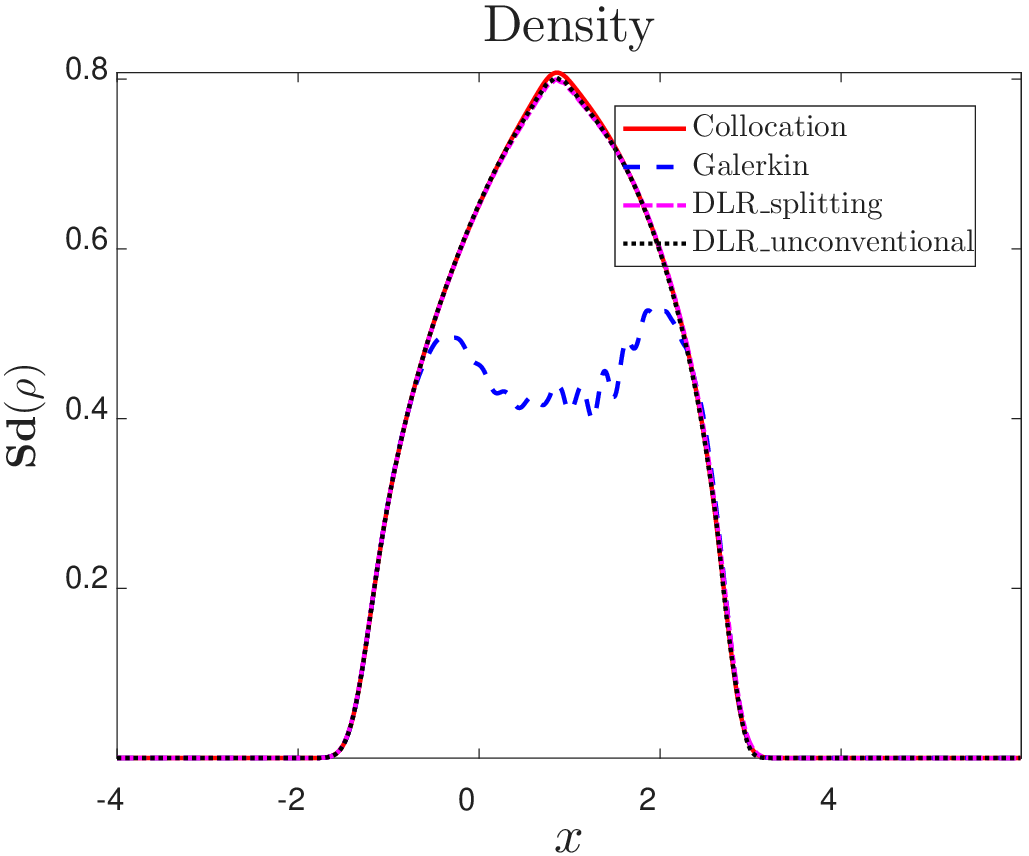}
        \caption{}
        \label{fig:subfig1_density_example1}
    \end{subfigure}
    \hfill
    \begin{subfigure}{0.32\textwidth}
        \centering
        \includegraphics[width=\textwidth]{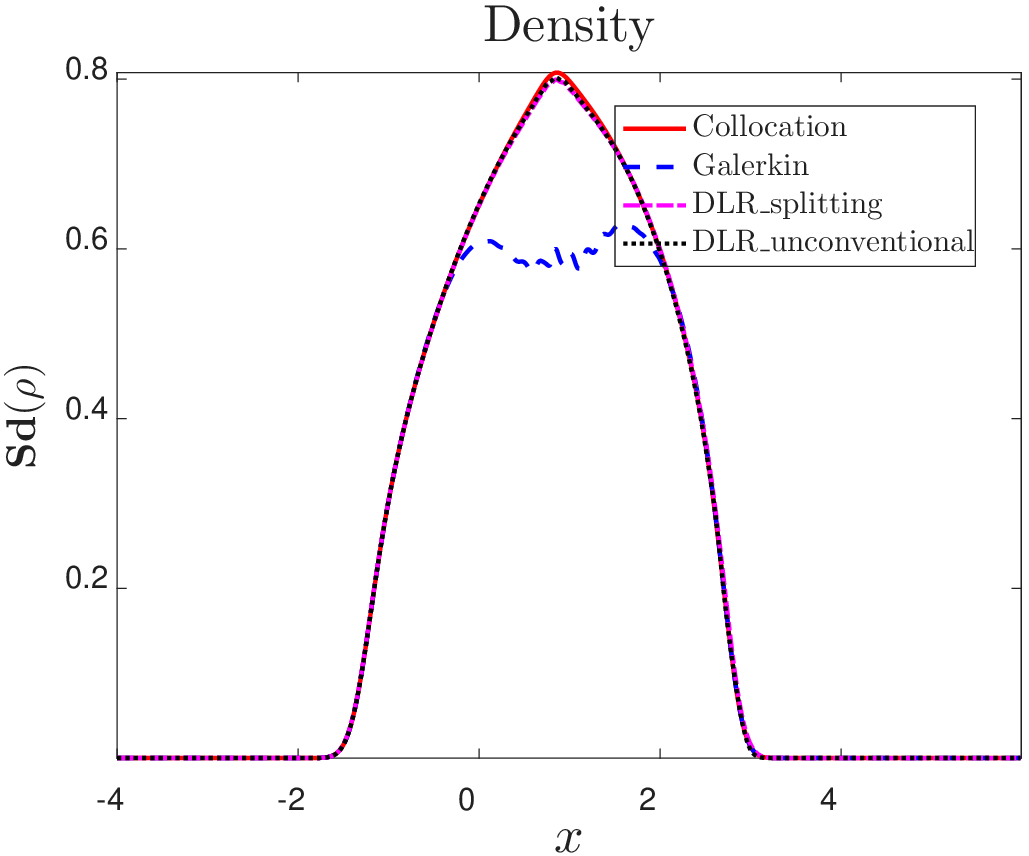}
        \caption{}
        \label{fig:subfig2_density_example1}
    \end{subfigure}
    \hfill
    \begin{subfigure}{0.32\textwidth}
        \centering
        \includegraphics[width=\textwidth]{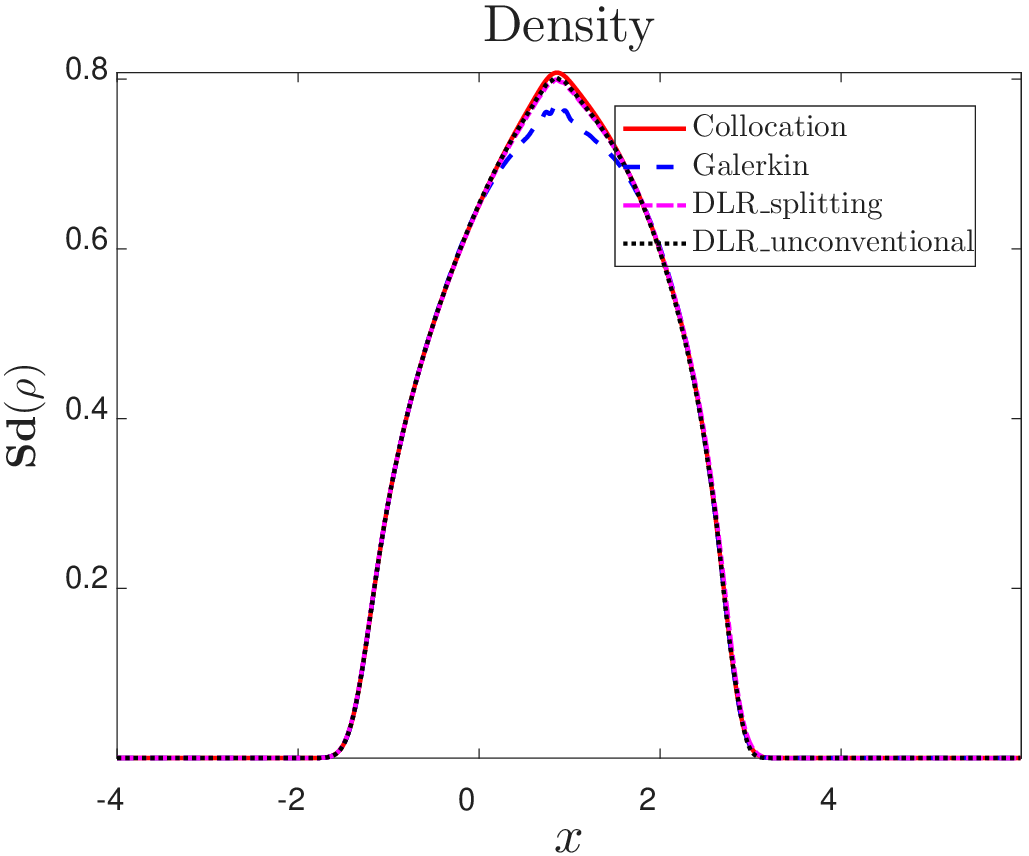}
        \caption{}
        \label{fig:subfig3_density_example1}
    \end{subfigure}
    \caption{Density in Example $2$. Left: degree of Galerkin $=20$; middle: degree of Galerkin $=30$; right: degree of Galerkin $=40$. The rank of the DLR method is $r=46$.}
    \label{fig:density_example2}
\end{figure}

\begin{figure}[H]
    \centering
    \begin{subfigure}{0.32\textwidth}
        \centering
        \includegraphics[width=\textwidth]{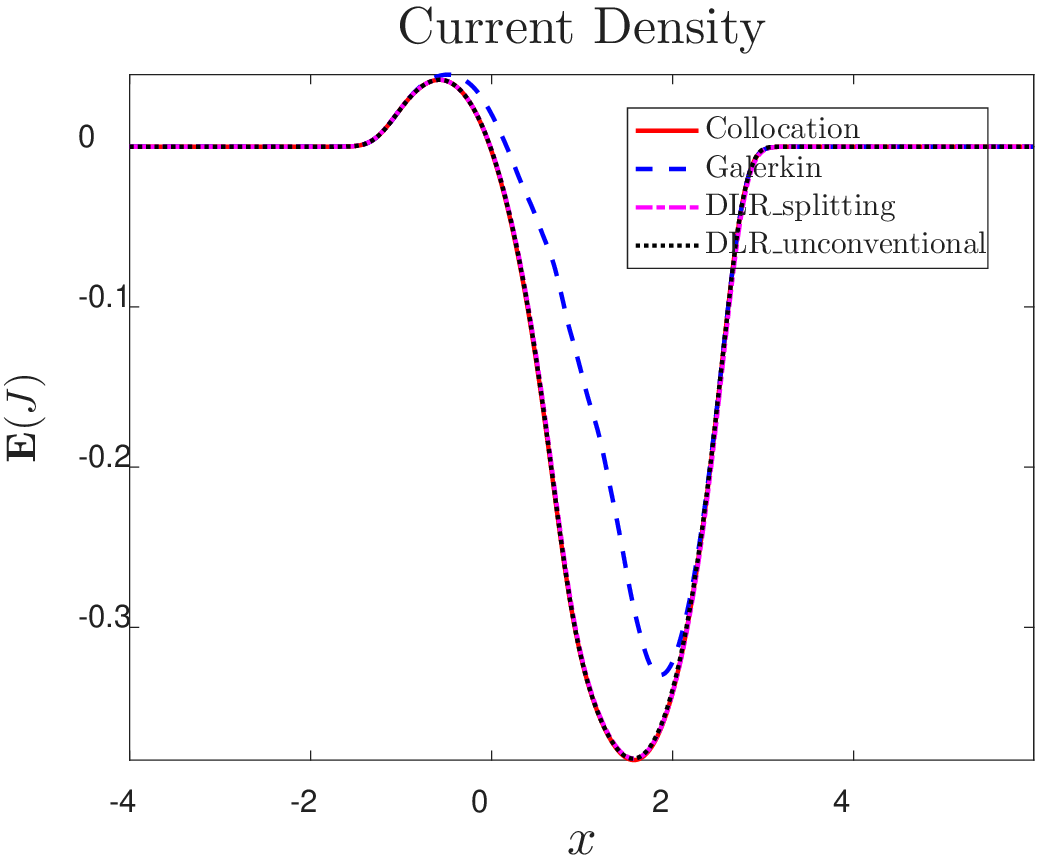}
        \caption{}
        \label{fig:subfig1_Jdensity_example1}
    \end{subfigure}
    \hfill
    \begin{subfigure}{0.32\textwidth}
        \centering
        \includegraphics[width=\textwidth]{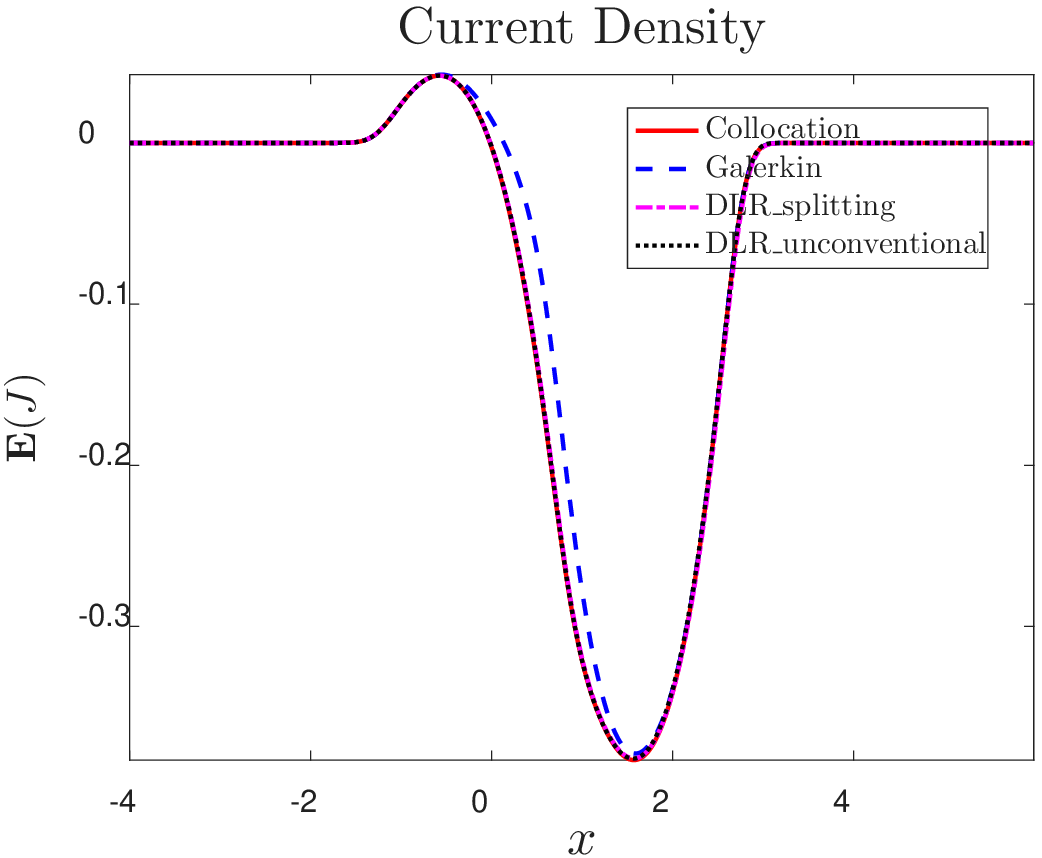}
        \caption{}
        \label{fig:subfig2_Jdensity_example1}
    \end{subfigure}
    \hfill
    \begin{subfigure}{0.32\textwidth}
        \centering
        \includegraphics[width=\textwidth]{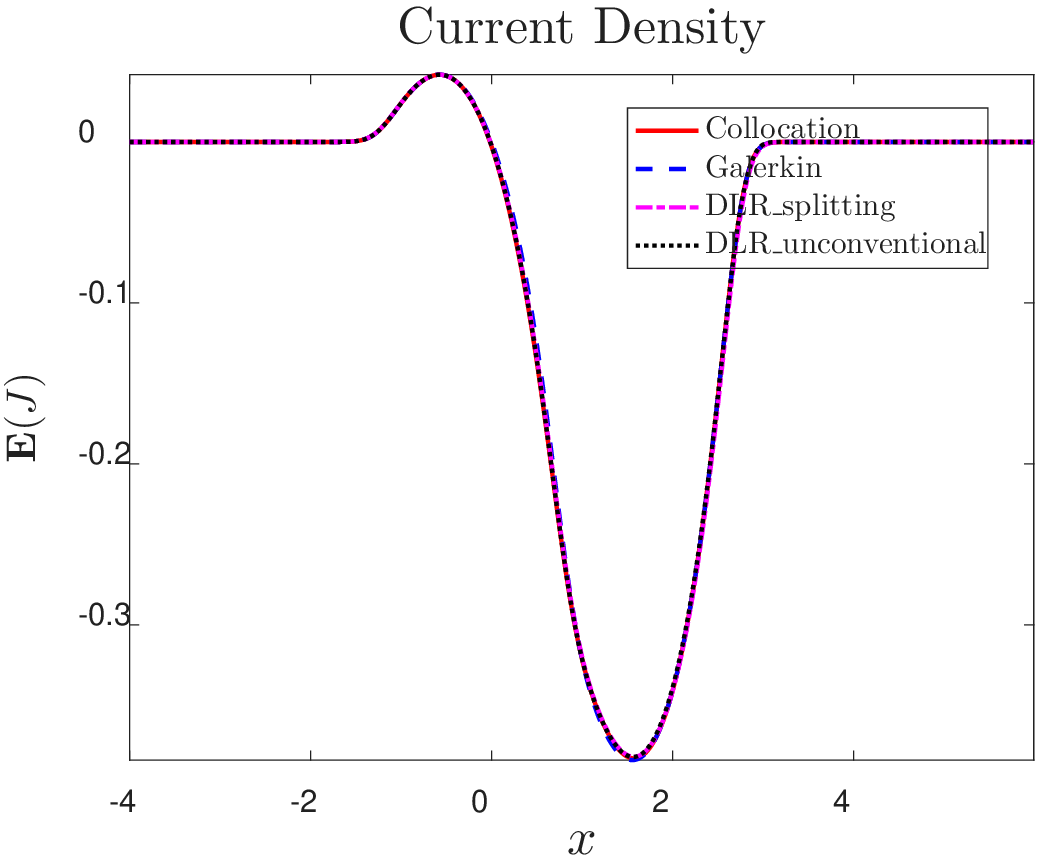}
        \caption{}
        \label{fig:subfig3_Jdensity_example1}
    \end{subfigure}
        \begin{subfigure}{0.32\textwidth}
        \centering
        \includegraphics[width=\textwidth]{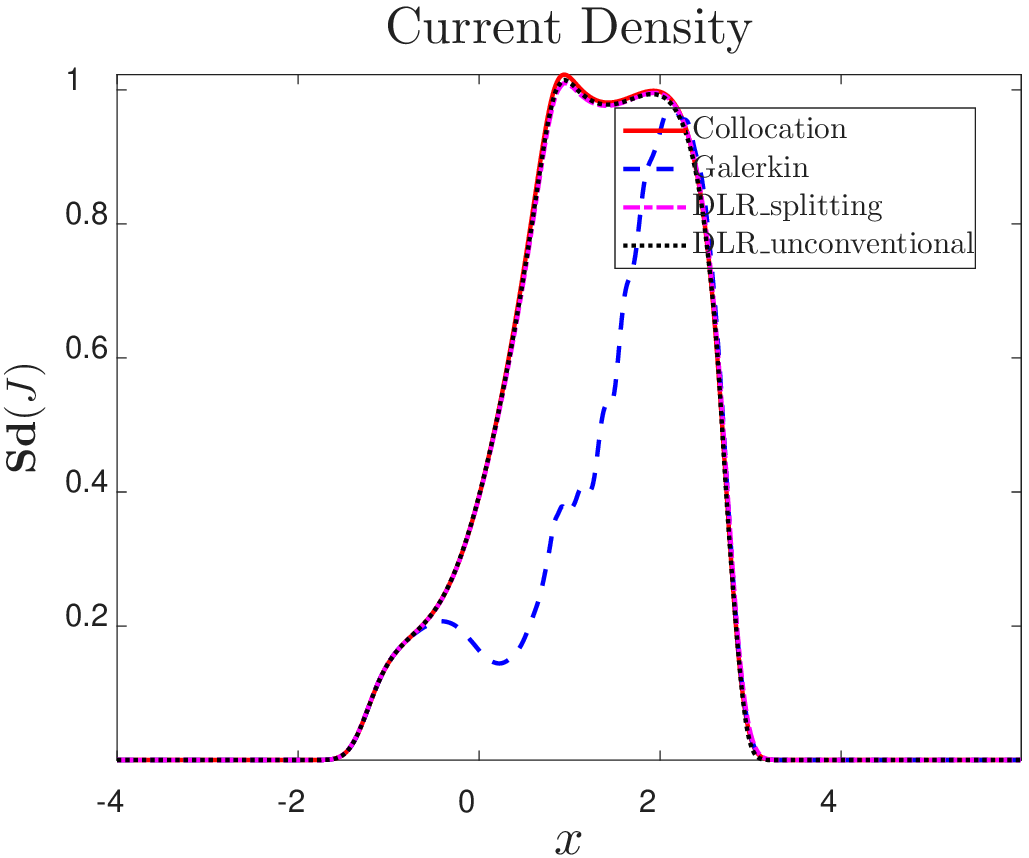}
        \caption{}
        \label{fig:subfig1_Jdensity_example1}
    \end{subfigure}
    \hfill
    \begin{subfigure}{0.32\textwidth}
        \centering
        \includegraphics[width=\textwidth]{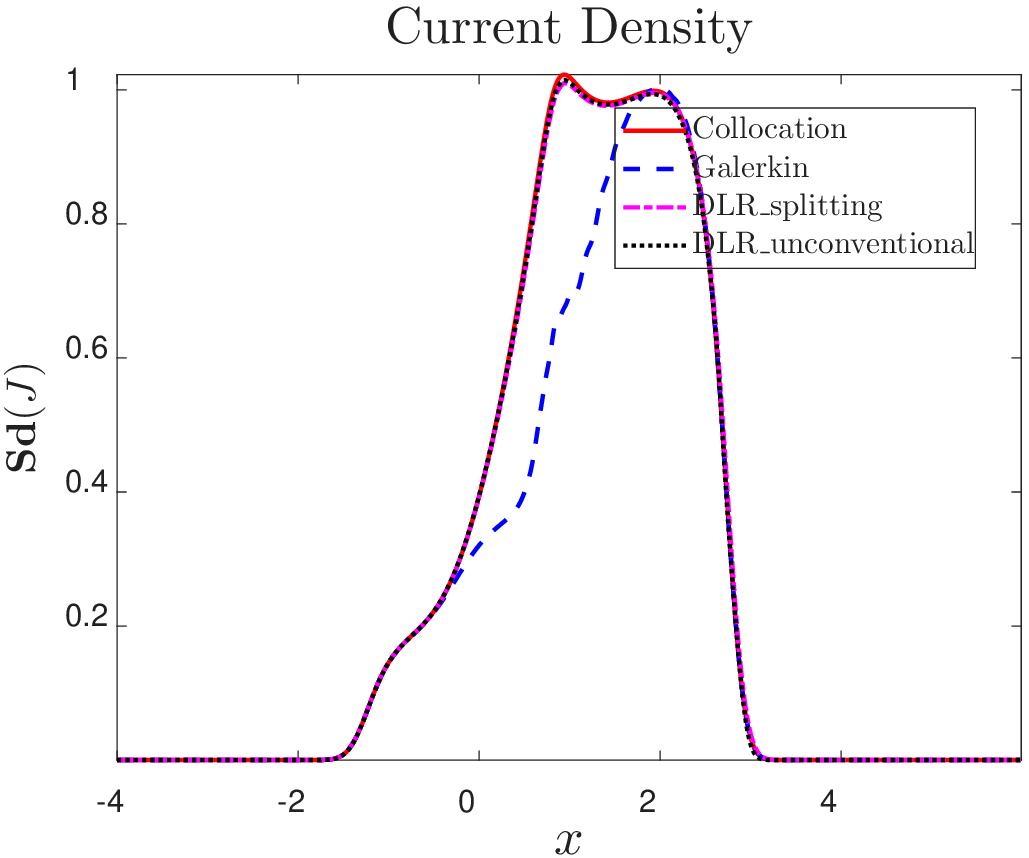}
        \caption{}
        \label{fig:subfig2_Jdensity_example1}
    \end{subfigure}
    \hfill
    \begin{subfigure}{0.32\textwidth}
        \centering
        \includegraphics[width=\textwidth]{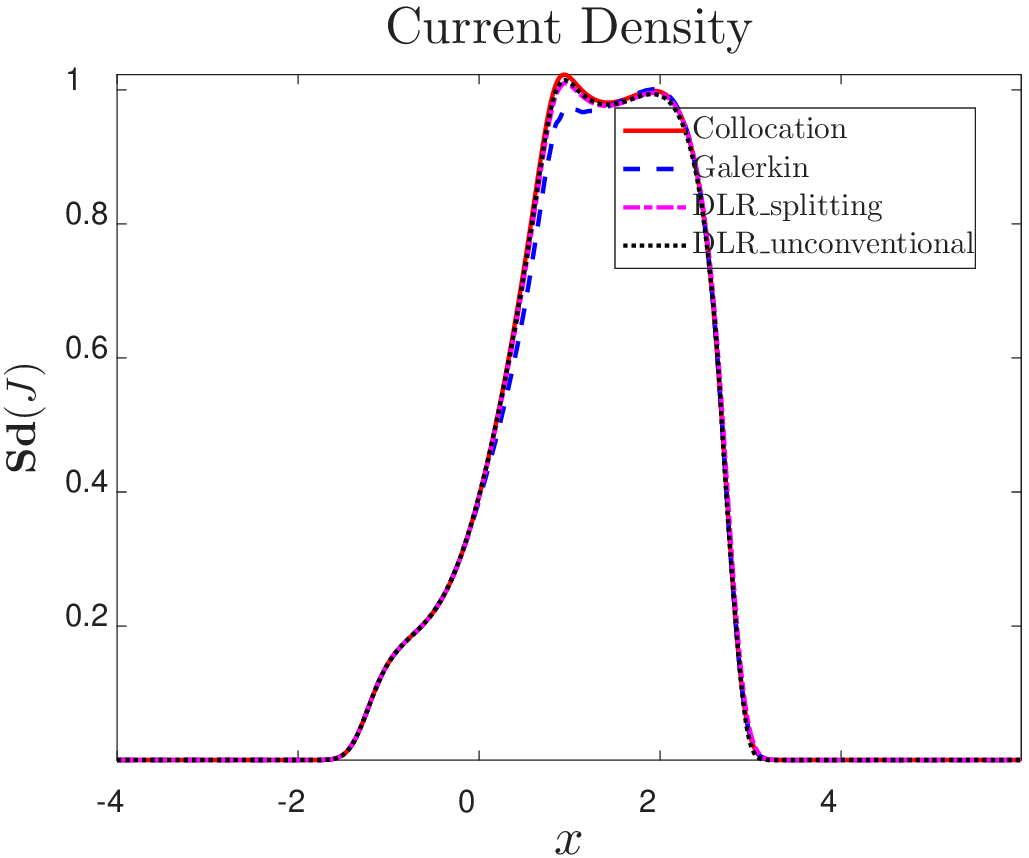}
        \caption{}
        \label{fig:subfig3_Jdensity_example1}
    \end{subfigure}
    \caption{Current density in Example $2$. Left: degree of Galerkin $=20$; middle: degree of Galerkin $=30$; right: degree of Galerkin $=40$. The rank of the DLR method is $r=46$.}
    \label{fig:Jdensity_example2}
\end{figure}

\textbf{Example 3 (Sensitivity Analysis).} We investigate the impact of the randomness strength $\gamma$ in the potential $V(\xi)=1+\gamma(\xi_1+2\xi_2)$, with $\gamma \in \{0.3, 0.5, 0.7\}$.

Table \ref{tab:error_comparison_example3} reveals that the approximation error scales with $\gamma$. Physically, a larger $\gamma$ implies stronger coupling between the spatial and random modes, which slows the decay of the singular values and increases the truncation error for a fixed-rank approximation, as visualized in Figures \ref{fig:density_example3} and \ref{fig:Jdensity_example3}. These results elucidate a fundamental mechanism: the initial data dictates the baseline rank, while the potential function drives its temporal evolution. In regimes where the initial randomness is high but the potential's randomness is low, the rank remains stable, allowing the DLR algorithm to achieve high accuracy with minimal storage costs. Conversely, if the initial randomness is low but the potential undergoes strong random perturbations, the effective rank grows rapidly, rendering the fixed-rank ansatz insufficient. Addressing this limitation via rank-adaptive strategies will be the subject of our follow-up work.

\begin{table}[!htbp]
\centering
\caption{Error comparison for different values of $\gamma_1$.}
\label{tab:error_comparison_example3}
\renewcommand{\arraystretch}{1.2}
\scalebox{0.6}{
\begin{tabular}{c|l|c|c|c|c|c}
\toprule
\diagbox{Parameter}{Method} & Method & $\psi$ & $\rho_{mean}$ & $j_{mean}$ & $\rho_{std}$ & $j_{std}$ \\
\midrule
\multirow{3}{*}{$\gamma_1 = 0.3$} & Galerkin & $7.476 \times 10^{-1}$ & $4.634 \times 10^{-1}$ & $3.301 \times 10^{-1}$ & $3.473 \times 10^{-1}$ & $4.532 \times 10^{-1}$ \\
 & DLR-splitting & $1.795 \times 10^{-1}$ & $\mathbf{7.231 \times 10^{-3}}$ & $\mathbf{1.449 \times 10^{-2}}$ & $\mathbf{1.684 \times 10^{-2}}$ & $\mathbf{2.119 \times 10^{-2}}$ \\
 & Unconventional & $\mathbf{1.196 \times 10^{-1}}$ & $1.628 \times 10^{-2}$ & $3.442 \times 10^{-2}$ & $2.469 \times 10^{-2}$ & $3.397 \times 10^{-2}$ \\
\midrule
\multirow{3}{*}{$\gamma_1 = 0.5$} & Galerkin & $8.441 \times 10^{-1}$ & $4.585 \times 10^{-1}$ & $2.991 \times 10^{-1}$ & $3.280 \times 10^{-1}$ & $4.238 \times 10^{-1}$ \\
 & DLR-splitting & $3.230 \times 10^{-1}$ & $\mathbf{2.583 \times 10^{-2}}$ & $\mathbf{3.511 \times 10^{-2}}$ & $\mathbf{5.228 \times 10^{-2}}$ & $\mathbf{5.935 \times 10^{-2}}$ \\
 & Unconventional & $\mathbf{2.798 \times 10^{-1}}$ & $4.126 \times 10^{-2}$ & $9.010 \times 10^{-2}$ & $8.054 \times 10^{-2}$ & $8.187 \times 10^{-2}$ \\
\midrule
\multirow{3}{*}{$\gamma_1 = 0.7$} & Galerkin & $9.056 \times 10^{-1}$ & $4.509 \times 10^{-1}$ & $2.729 \times 10^{-1}$ & $3.401 \times 10^{-1}$ & $5.446 \times 10^{-1}$ \\
 & DLR-splitting & $4.673 \times 10^{-1}$ & $9.736 \times 10^{-2}$ & $\mathbf{1.649 \times 10^{-1}}$ & $\mathbf{7.335 \times 10^{-2}}$ & $\mathbf{5.868 \times 10^{-2}}$ \\
 & Unconventional & $\mathbf{4.477 \times 10^{-1}}$ & $\mathbf{7.767 \times 10^{-2}}$ & $1.767 \times 10^{-1}$ & $1.154 \times 10^{-1}$ & $1.516 \times 10^{-1}$ \\
\bottomrule
\end{tabular}
}
\end{table}

\begin{figure}[H]
    \centering
    \begin{subfigure}{0.32\textwidth}
        \centering
        \includegraphics[width=\textwidth]{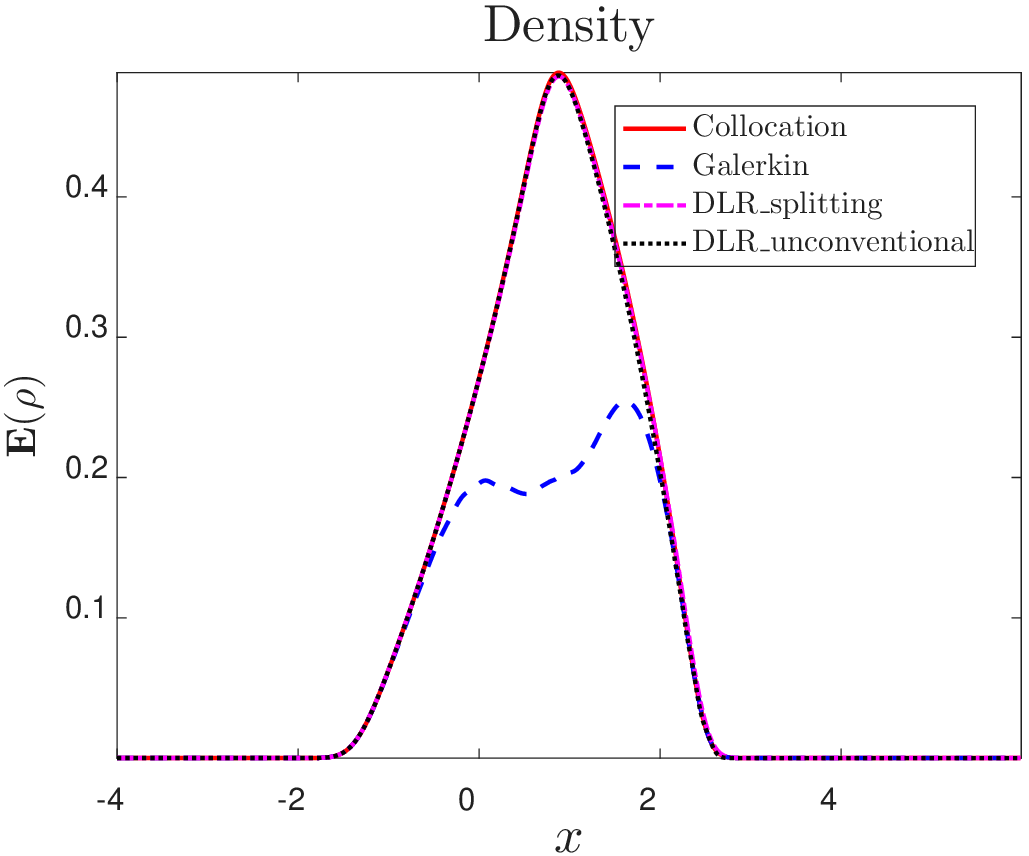}
        \caption{}
        \label{fig:subfig1_example3}
    \end{subfigure}
    \hfill
    \begin{subfigure}{0.32\textwidth}
        \centering
        \includegraphics[width=\textwidth]{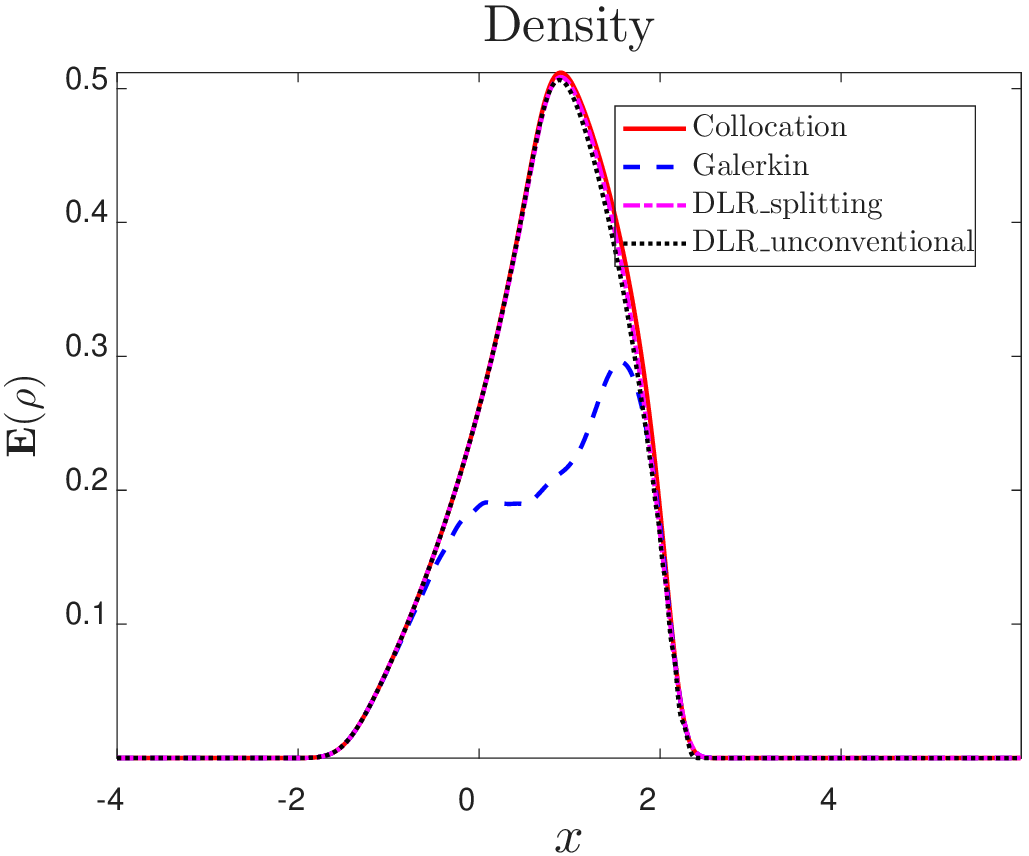}
        \caption{}
        \label{fig:subfig1_example3}
    \end{subfigure}
    \hfill
    \begin{subfigure}{0.32\textwidth}
        \centering
        \includegraphics[width=\textwidth]{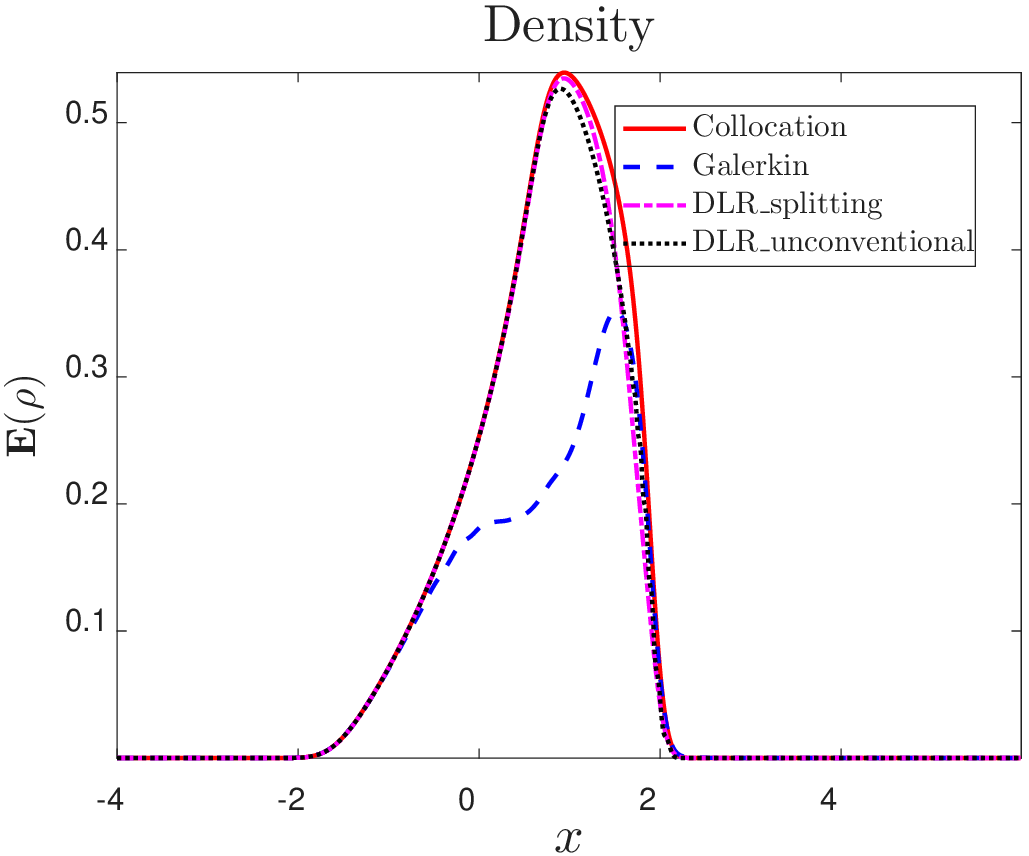}
        \caption{}
        \label{fig:subfig1_example3}
    \end{subfigure}
    \vspace{0.5cm} 
    \begin{subfigure}{0.32\textwidth}
        \centering
        \includegraphics[width=\textwidth]{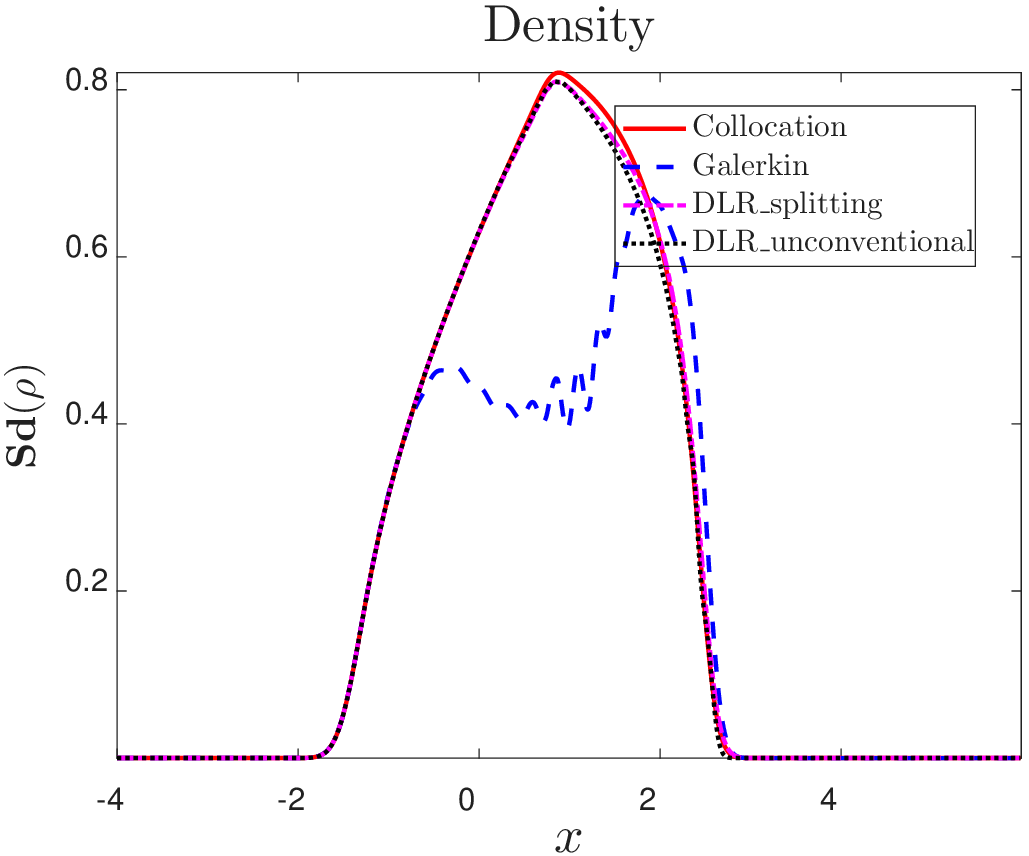}
        \caption{}
        \label{fig:subfig1_example3}
    \end{subfigure}
    \hfill
    \begin{subfigure}{0.32\textwidth}
        \centering
        \includegraphics[width=\textwidth]{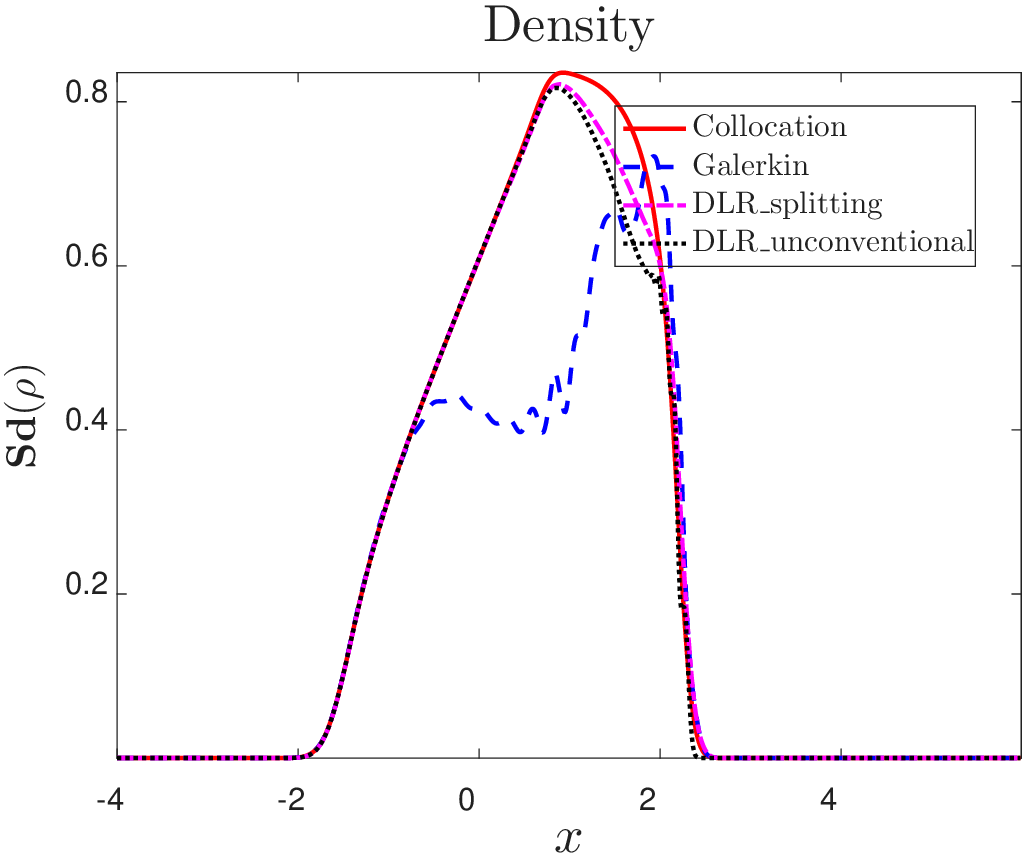}
        \caption{}
        \label{fig:subfig1_example3}
    \end{subfigure}
    \hfill
    \begin{subfigure}{0.32\textwidth}
        \centering
        \includegraphics[width=\textwidth]{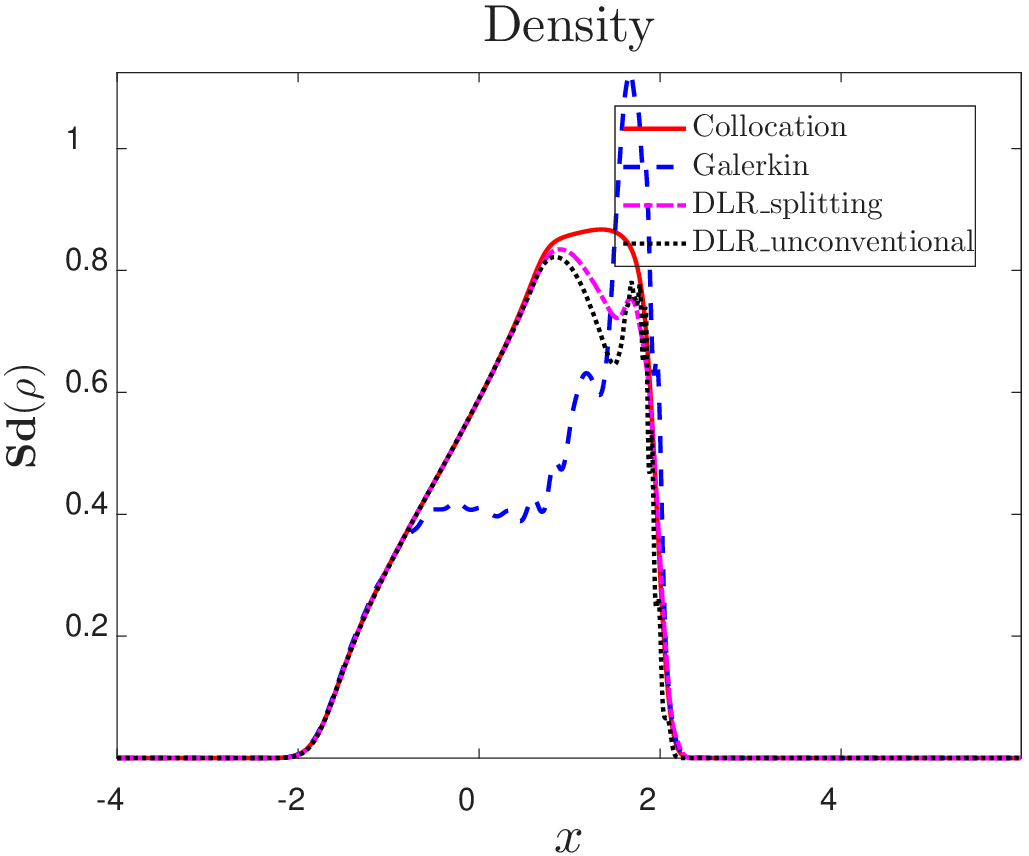}
        \caption{}
        \label{fig:subfig1_example3}
    \end{subfigure}

    \caption{The density solution in example 3. From top left to bottom right, $\gamma=0.3, 0.5, 0.7, 0.9, 1.1, 1.3$.}
    \label{fig:density_example3}
\end{figure}

\begin{figure}[H]
    \centering
    \begin{subfigure}{0.32\textwidth}
        \centering
        \includegraphics[width=\textwidth]{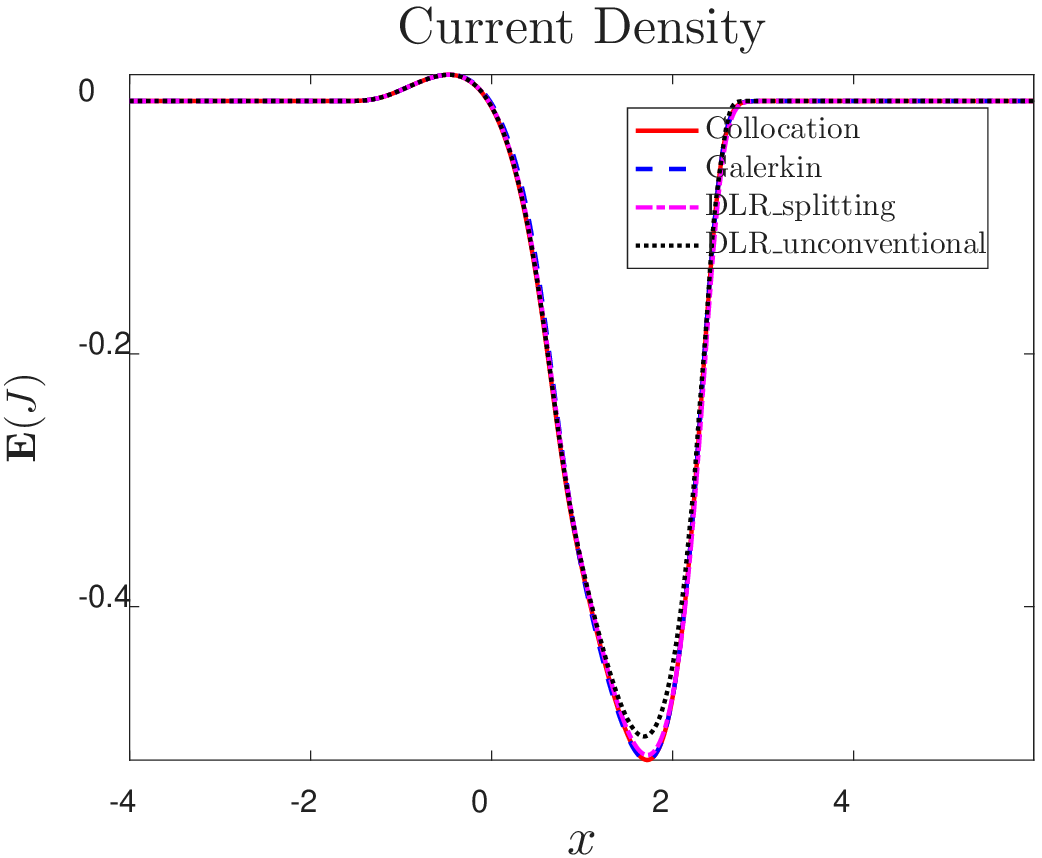}
        \caption{}
        \label{fig:subfig1_Jdensity_example3}
    \end{subfigure}
    \hfill
    \begin{subfigure}{0.32\textwidth}
        \centering
        \includegraphics[width=\textwidth]{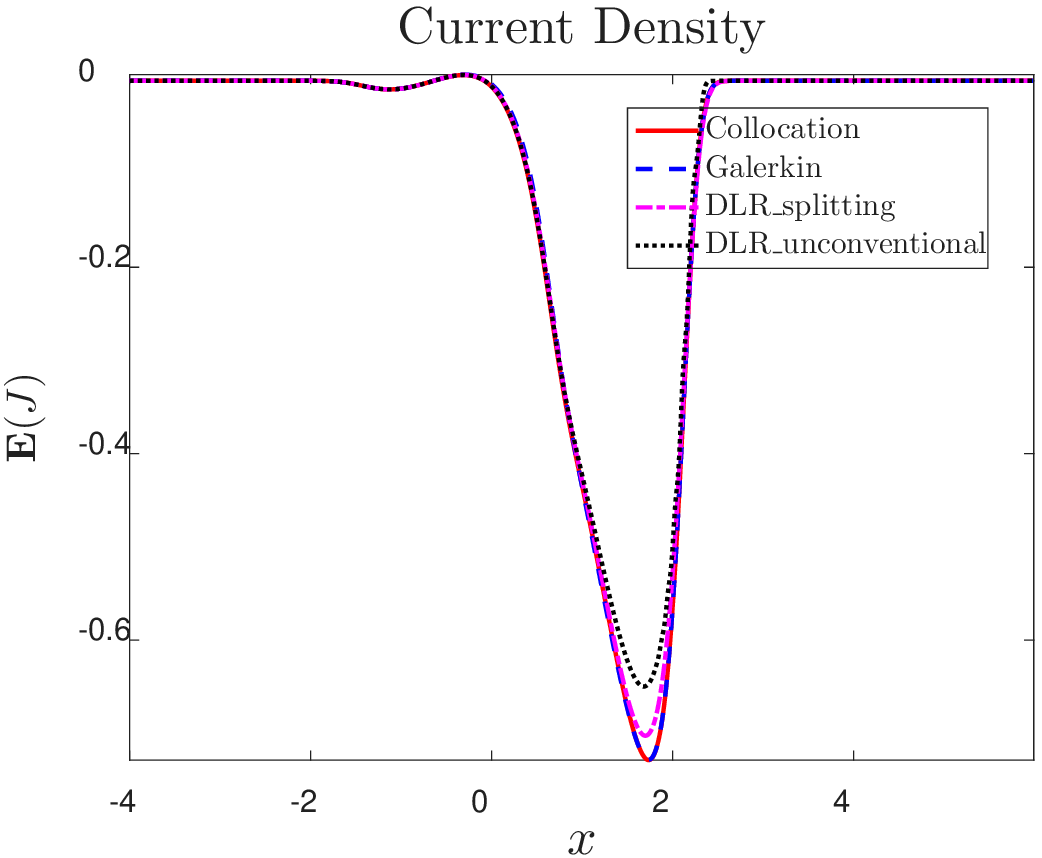}
        \caption{}
        \label{fig:subfig2_Jdensity_example3}
    \end{subfigure}
    \hfill
    \begin{subfigure}{0.32\textwidth}
        \centering
        \includegraphics[width=\textwidth]{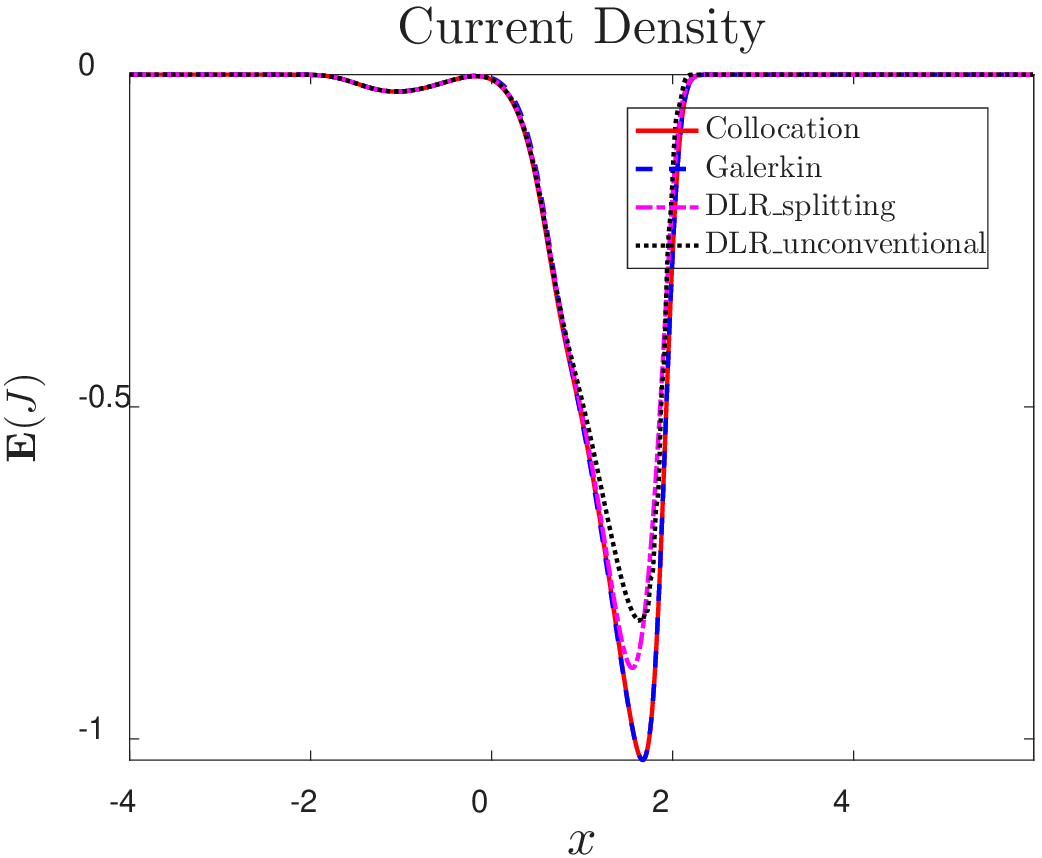}
        \caption{}
        \label{fig:subfig3_Jdensity_example3}
    \end{subfigure}

    \vspace{0.5cm} 
    \begin{subfigure}{0.32\textwidth}
        \centering
        \includegraphics[width=\textwidth]{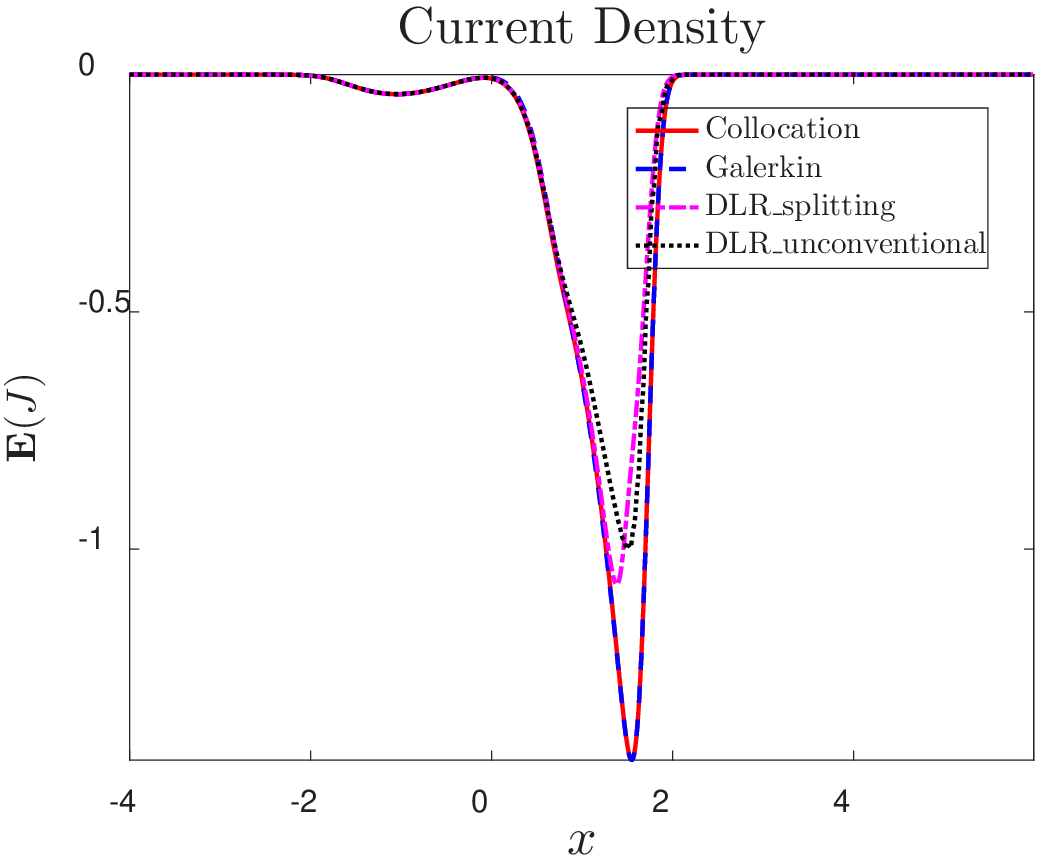}
        \caption{}
        \label{fig:subfig4_Jdensity_example3}
    \end{subfigure}
    \hfill
    \begin{subfigure}{0.32\textwidth}
        \centering
        \includegraphics[width=\textwidth]{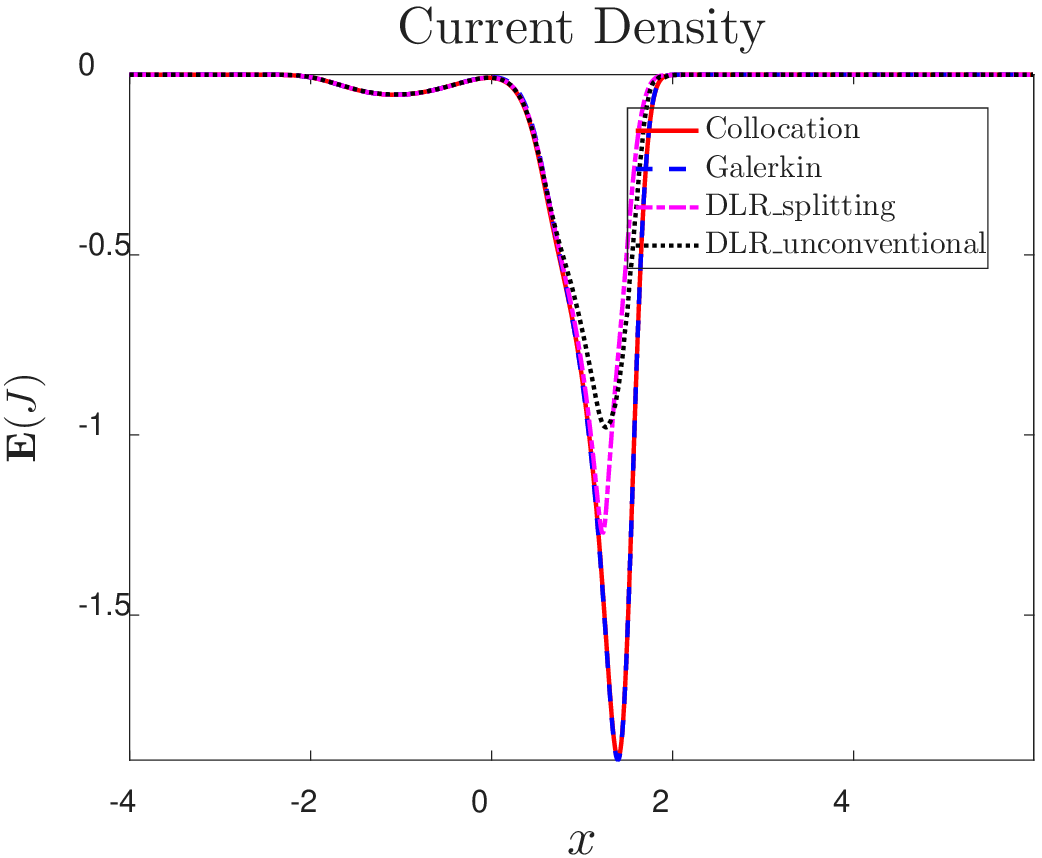}
        \caption{}
        \label{fig:subfig5_Jdensity_example3}
    \end{subfigure}
    \hfill
    \begin{subfigure}{0.32\textwidth}
        \centering
        \includegraphics[width=\textwidth]{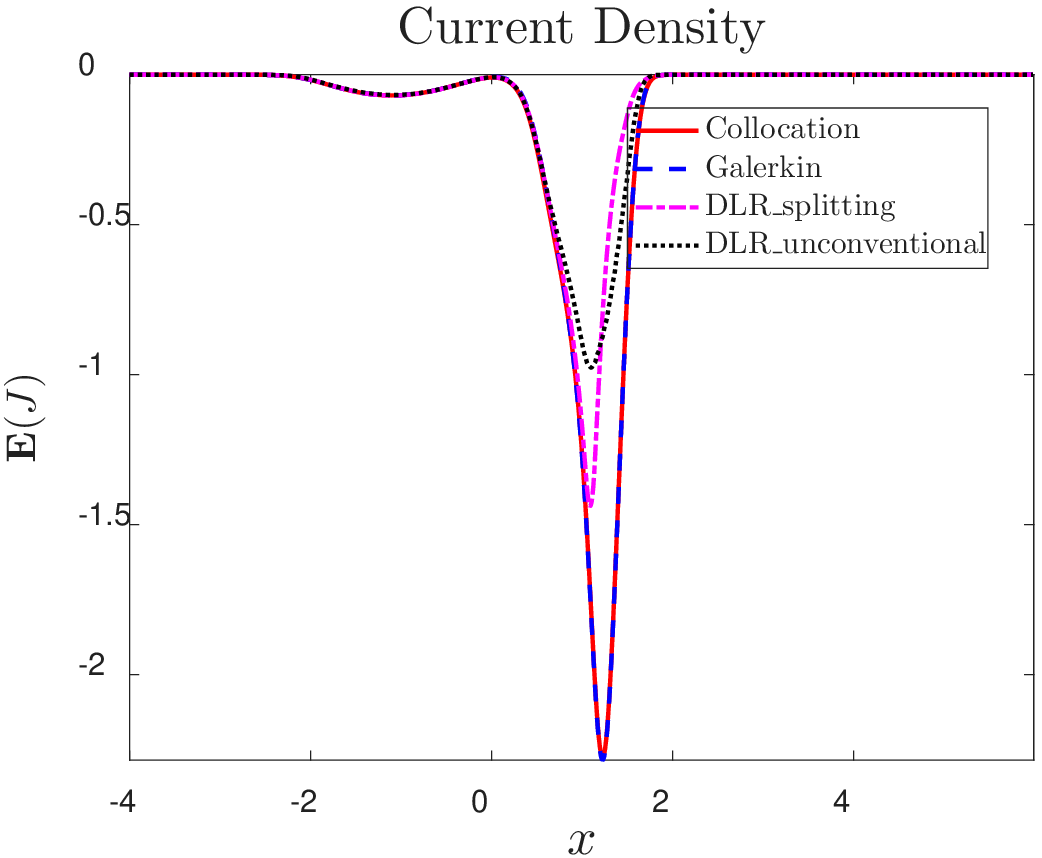}
        \caption{}
        \label{fig:subfig6_Jdensity_example3}
    \end{subfigure}

    \caption{The current density solution in example 3. From top left to bottom right, $\gamma=0.3, 0.5, 0.7, 0.9, 1.1, 1.3$.}
    \label{fig:Jdensity_example3}
\end{figure}

\begin{figure}[H]
    \centering
    \begin{subfigure}{0.32\textwidth}
        \centering
        \includegraphics[width=\textwidth]{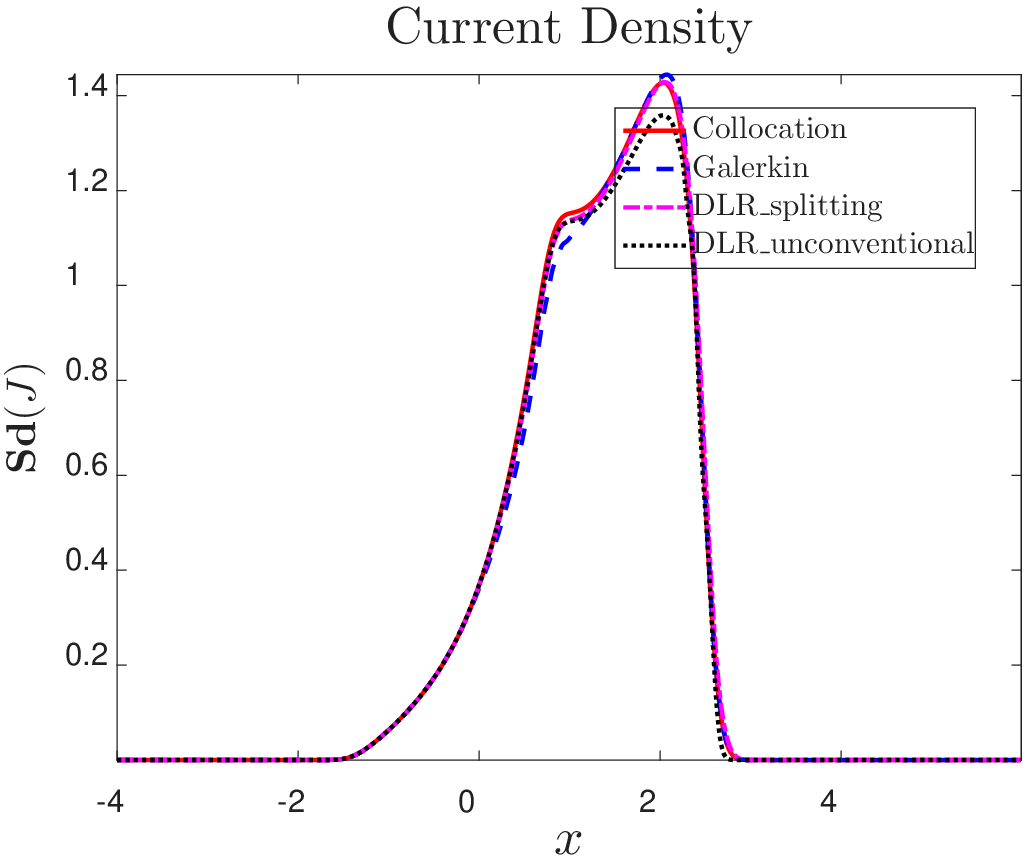}
        \caption{}
        \label{fig:subfig1_Jdensity_example3}
    \end{subfigure}
    \hfill
    \begin{subfigure}{0.32\textwidth}
        \centering
        \includegraphics[width=\textwidth]{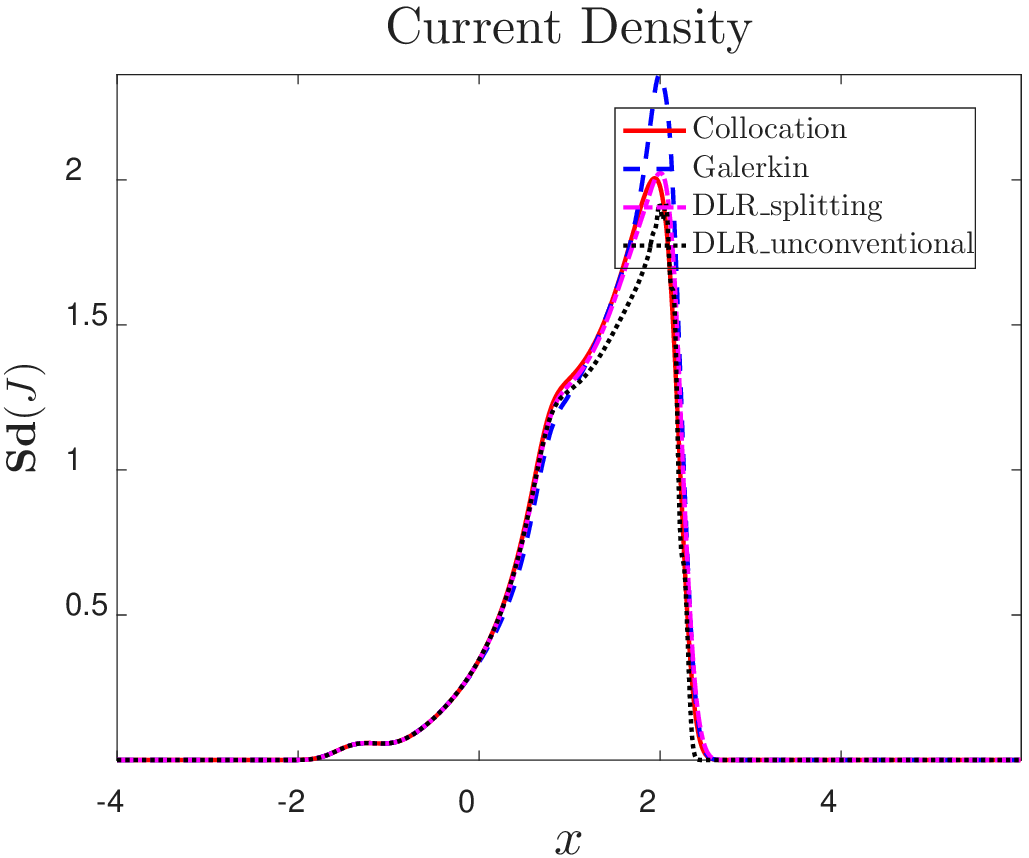}
        \caption{}
        \label{fig:subfig2_Jdensity_example3}
    \end{subfigure}
    \hfill
    \begin{subfigure}{0.32\textwidth}
        \centering
        \includegraphics[width=\textwidth]{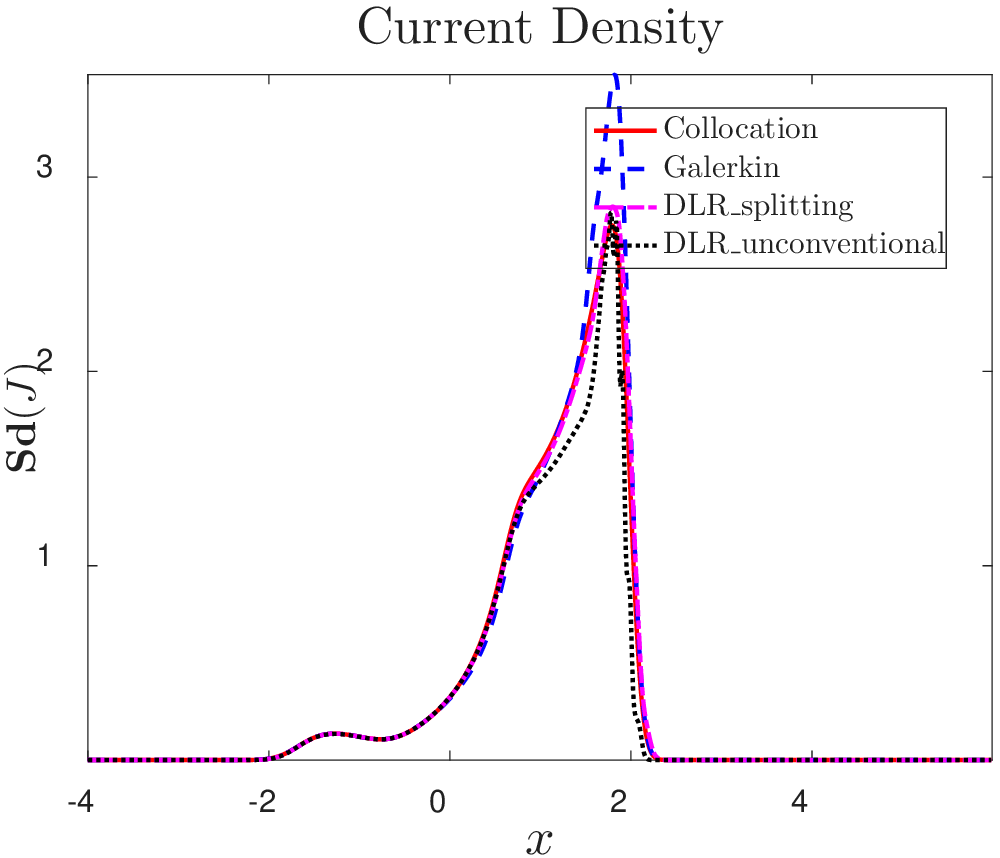}
        \caption{}
        \label{fig:subfig3_Jdensity_example3}
    \end{subfigure}

    \vspace{0.5cm} 
    \begin{subfigure}{0.32\textwidth}
        \centering
        \includegraphics[width=\textwidth]{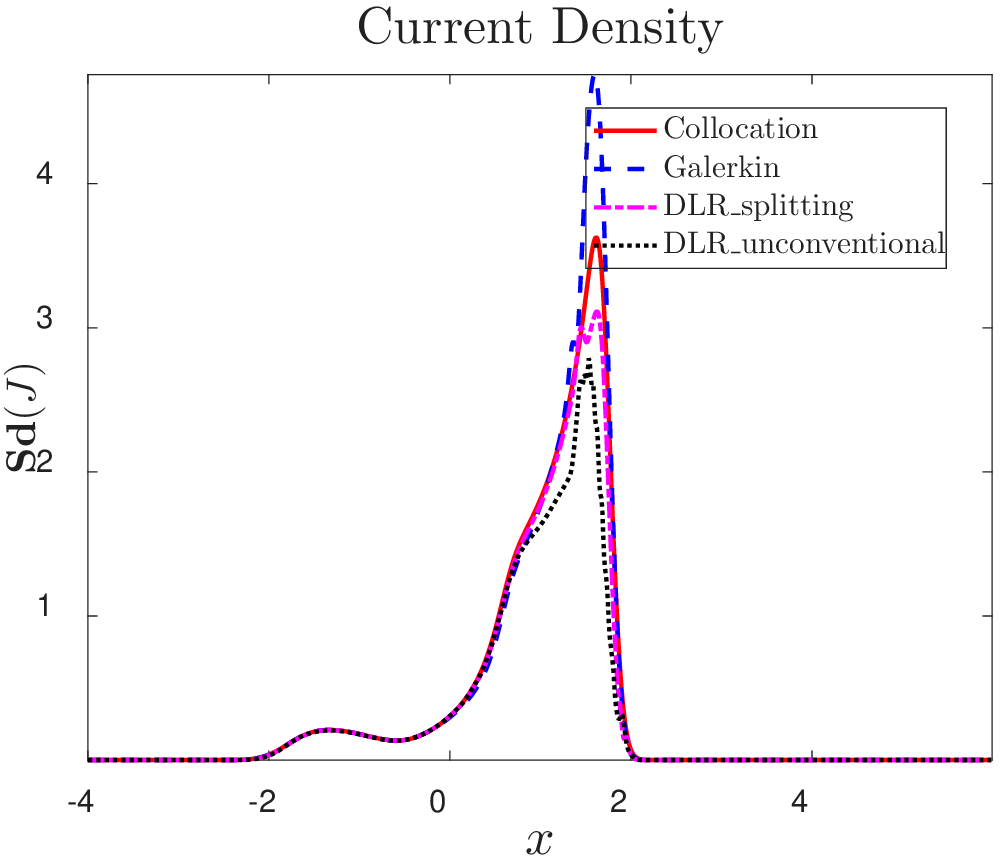}
        \caption{}
        \label{fig:subfig4_Jdensity_example3}
    \end{subfigure}
    \hfill
    \begin{subfigure}{0.32\textwidth}
        \centering
        \includegraphics[width=\textwidth]{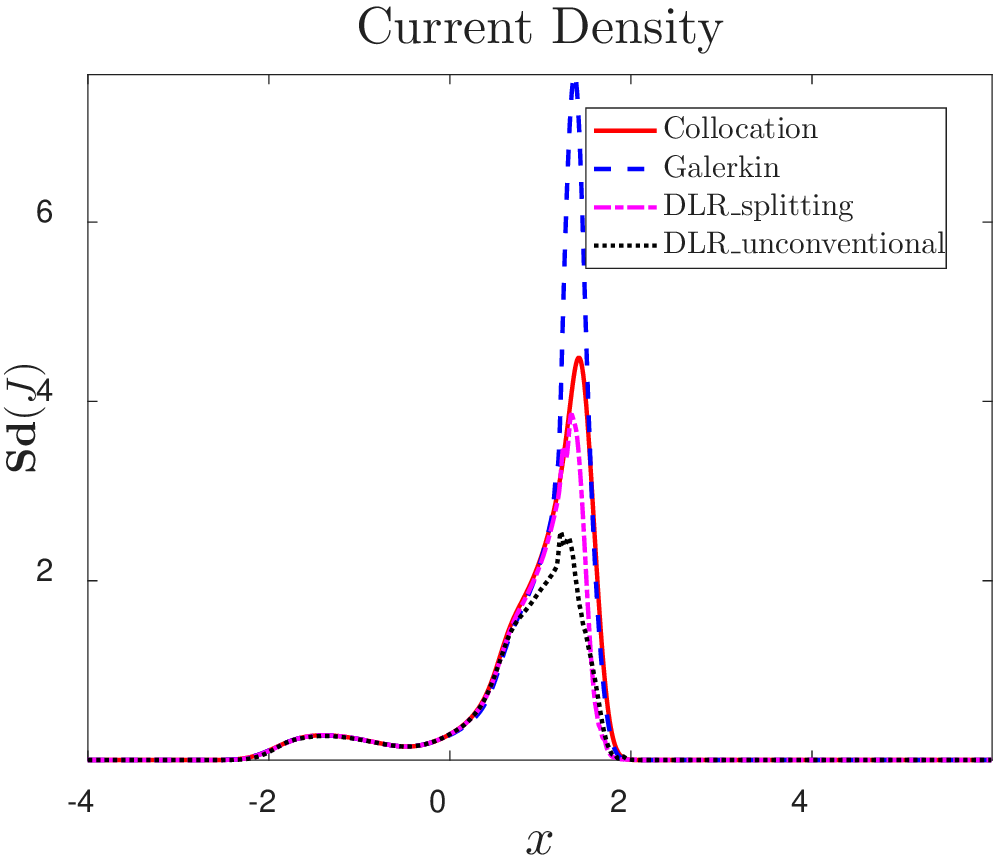}
        \caption{}
        \label{fig:subfig5_Jdensity_example3}
    \end{subfigure}
    \hfill
    \begin{subfigure}{0.32\textwidth}
        \centering
        \includegraphics[width=\textwidth]{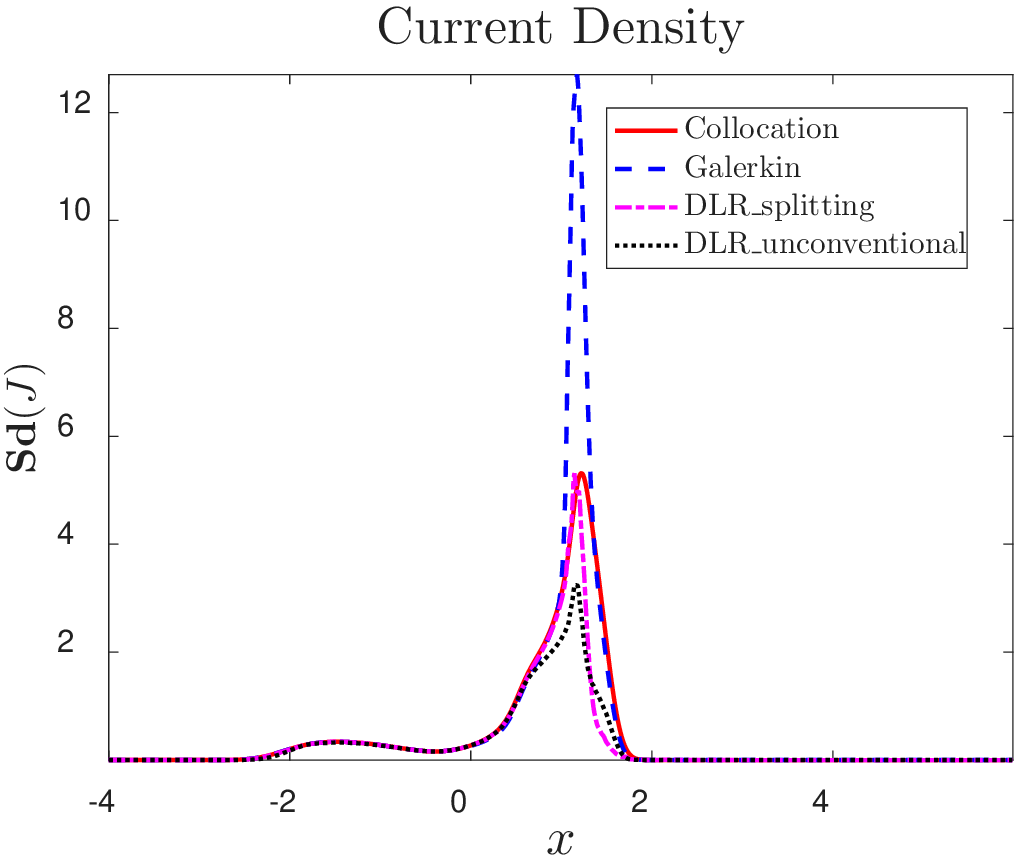}
        \caption{}
        \label{fig:subfig6_Jdensity_example3}
    \end{subfigure}

    \caption{The current density solution in example 3. From top left to bottom right, $\gamma=0.3, 0.5, 0.7, 0.9, 1.1, 1.3$.}
    \label{fig:Jdensity_example3_std}
\end{figure}

\textbf{Example 4 (Periodic Potential).} 
We assess the method using a periodic potential $V(x)=\sin(x)$ with random perturbation $V(\xi)=1+0.1(\xi_1+2\xi_2)$. The initial parameters are set to $\tilde{p}=0.7(\xi_1-\xi_2)$ and $\tilde{q}=1+0.8(\xi_1+2\xi_2)$. The domain and grid settings follow Example 1.

Table \ref{tab:error_comparison_example5} demonstrates that the method generalizes effectively to periodic systems, with the proposed scheme achieving an accuracy of $\mathcal{O}(10^{-3})$. The mean and standard deviation of the density and current density are displayed in Figure \ref{fig:density_example5}.




\begin{table}[!htbp]
\centering
\caption{Error comparison for Example 4.}
\label{tab:error_comparison_example5}
\renewcommand{\arraystretch}{1.2}
\scalebox{0.7}{
\begin{tabular}{l|c|c|c|c|c}
\toprule
\diagbox{Method}{Error} & $\psi$ & $\rho_{mean}$ & $\rho_{std}$ & $j_{mean}$ & $j_{std}$ \\
\midrule
Galerkin            & $1.206 \times 10^{-1}$ & $1.473 \times 10^{-2}$ & $3.688 \times 10^{-2}$ & $3.984 \times 10^{-2}$ & $6.910 \times 10^{-2}$ \\
DLR-splitting       & $4.720 \times 10^{-2}$ & $\mathbf{1.634 \times 10^{-3}}$ & $\mathbf{5.756 \times 10^{-3}}$ & $\mathbf{4.966 \times 10^{-3}}$ & $\mathbf{9.270 \times 10^{-3}}$ \\
Unconventional      & $\mathbf{4.329 \times 10^{-2}}$ & $1.916 \times 10^{-3}$ & $6.744 \times 10^{-3}$ & $5.652 \times 10^{-3}$ & $1.217 \times 10^{-2}$ \\
\bottomrule
\end{tabular}
}
\end{table}

\begin{figure}[H]
    \centering
    \begin{subfigure}{0.32\textwidth}
        \centering
        \includegraphics[width=\textwidth]{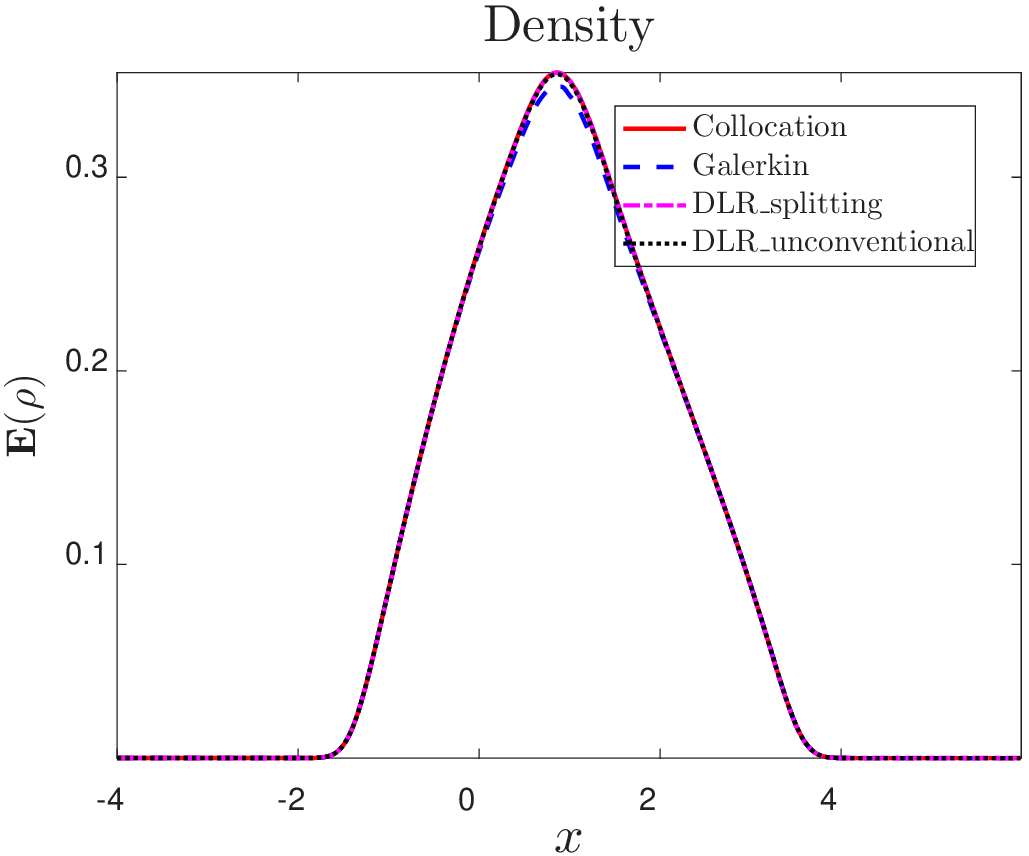}
        \caption{}
        \label{fig:subfig1_density_example1}
    \end{subfigure}
    \begin{subfigure}{0.32\textwidth}
        \centering
        \includegraphics[width=\textwidth]{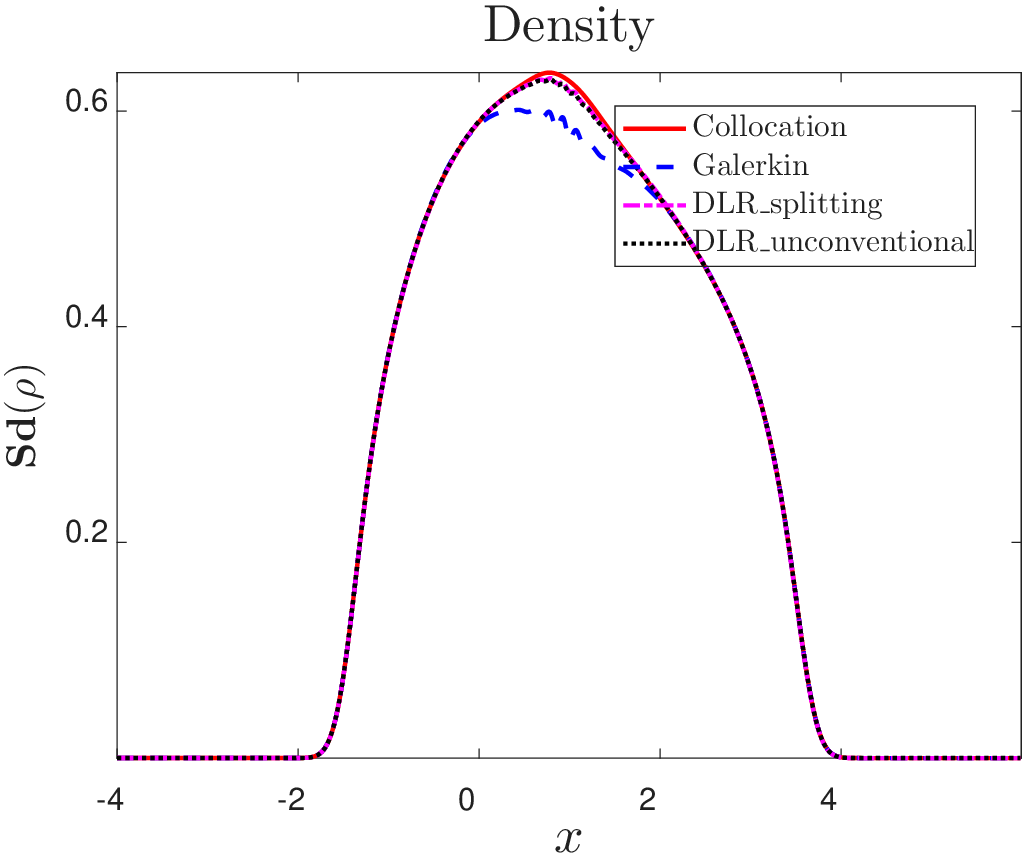}
        \caption{}
        \label{fig:subfig2_density_example1}
    \end{subfigure}
    \vspace{1em} 

    \begin{subfigure}{0.32\textwidth}
        \centering
        \includegraphics[width=\textwidth]{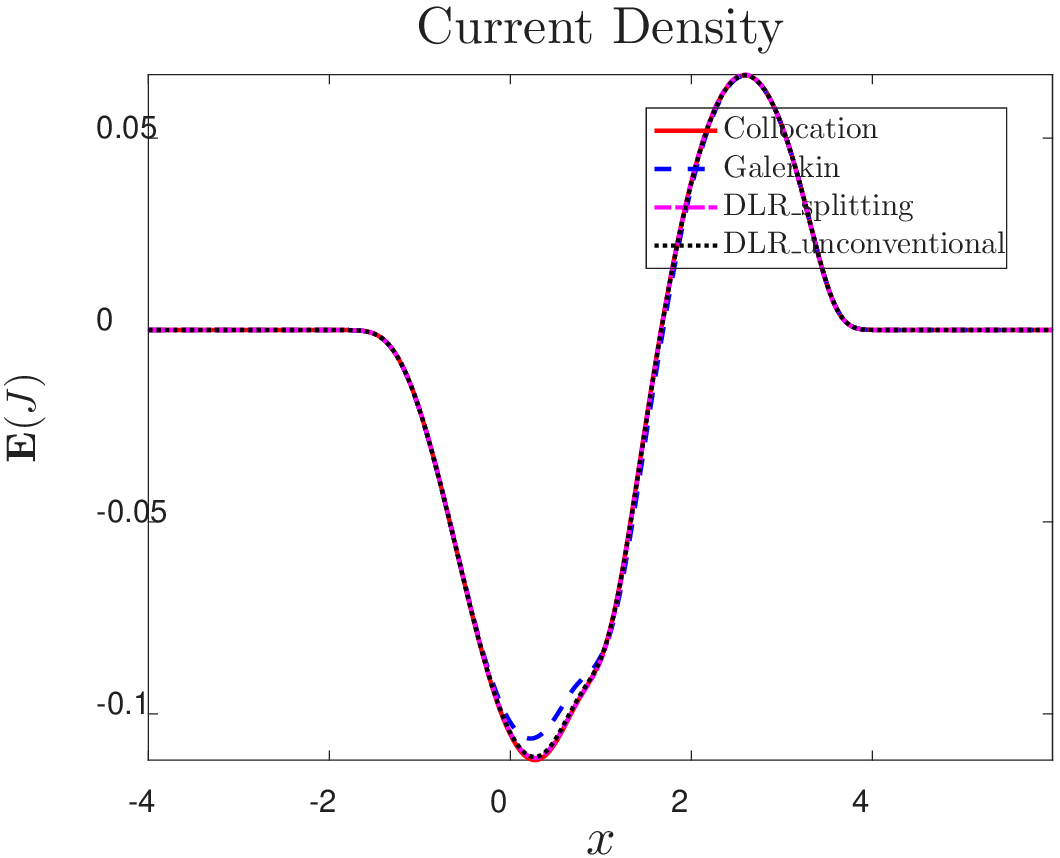}
        \caption{}
        \label{fig:subfig2_density_example1}
    \end{subfigure}
    \begin{subfigure}{0.32\textwidth}
        \centering
        \includegraphics[width=\textwidth]{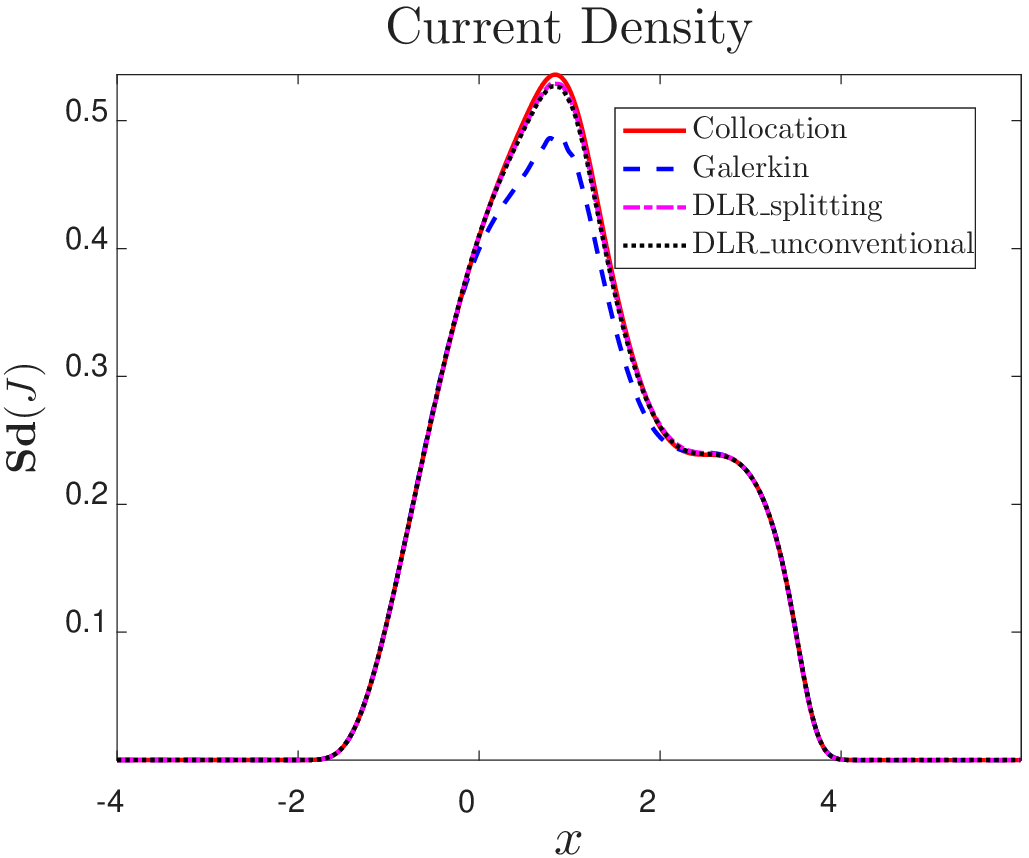}
        \caption{}
        \label{fig:subfig2_density_example1}
    \end{subfigure}
    \caption{Mean and std of $\rho$ and $j$ in Example $4$. The rank of the DLR method is $r=46$.}
    \label{fig:density_example5}
\end{figure}

\textbf{Example 5 (Confinement Regime).} 
Finally, we consider a confinement regime with a double-well potential $V(x)=(x^2-1)^2$ and $\varepsilon=0.01$. The random potential is $V(\xi)=1+0.1(\xi_1+2\xi_2)$. The initial condition parameter $\tilde{q}$ is modified to test a specific superposition state:
\begin{equation*}
    \tilde{p}=0.7(\xi_1-\xi_2), \quad \tilde{q}=1+0.01(\xi_1+2\xi_2).
\end{equation*}


Table \ref{tab:error_comparison_example6} highlights that while the standard Galerkin and DLR-splitting methods struggle with the complex tunneling dynamics (indicated by higher errors in $j$), the proposed unconventional integrator maintains robust accuracy ($e_{\psi} \approx 4.3 \times 10^{-3}$). Figure \ref{fig:density_example6} visualizes the results, confirming the method's ability to resolve the split-density structure inherent to double-well problems.



\begin{table}[!htbp]
\centering
\caption{Error analysis for Example 5.}
\label{tab:error_comparison_example6}
\renewcommand{\arraystretch}{1.2}
\scalebox{0.7}{
\begin{tabular}{l|c|c|c|c|c}
\toprule
\diagbox{Method}{Error} & $\psi$ & $\rho_{mean}$ & $j_{mean}$ & $\rho_{std}$ & $j_{std}$ \\
\midrule
Galerkin            & $1.067 \times 10^{-1}$ & $1.215 \times 10^{-2}$ & $1.444 \times 10^{-2}$ & $4.660 \times 10^{-2}$ & $7.299 \times 10^{-2}$ \\
DLR-splitting       & $1.115 \times 10^{-1}$ & $\mathbf{7.183 \times 10^{-4}}$ & $6.646 \times 10^{-2}$ & $6.654 \times 10^{-3}$ & $4.323 \times 10^{-2}$ \\
Unconventional      & $\mathbf{4.370 \times 10^{-3}}$ & $4.548 \times 10^{-3}$ & $\mathbf{7.044 \times 10^{-3}}$ & $\mathbf{6.119 \times 10^{-3}}$ & $\mathbf{5.884 \times 10^{-3}}$ \\
\bottomrule
\end{tabular}
}
\end{table}

\begin{figure}[H]
    \centering
    \begin{subfigure}{0.32\textwidth}
        \centering
        \includegraphics[width=\textwidth]{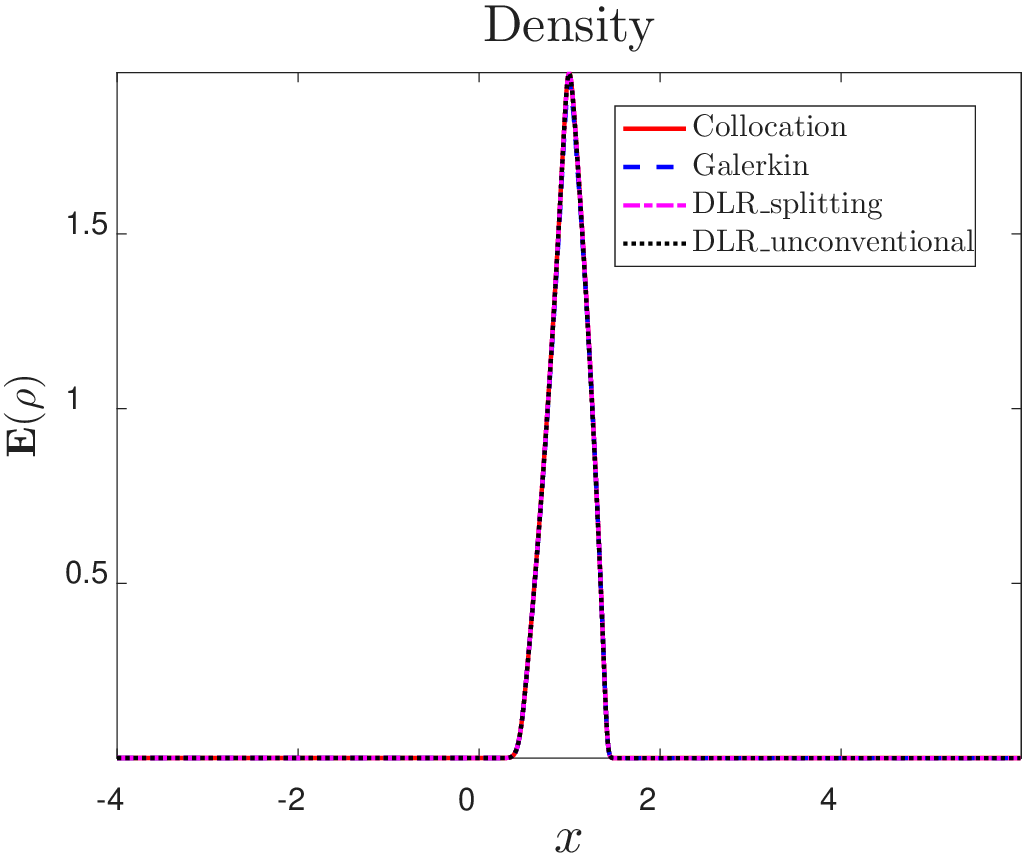}
        \caption{}
        \label{fig:subfig1_density_example1}
    \end{subfigure}
    \begin{subfigure}{0.32\textwidth}
        \centering
        \includegraphics[width=\textwidth]{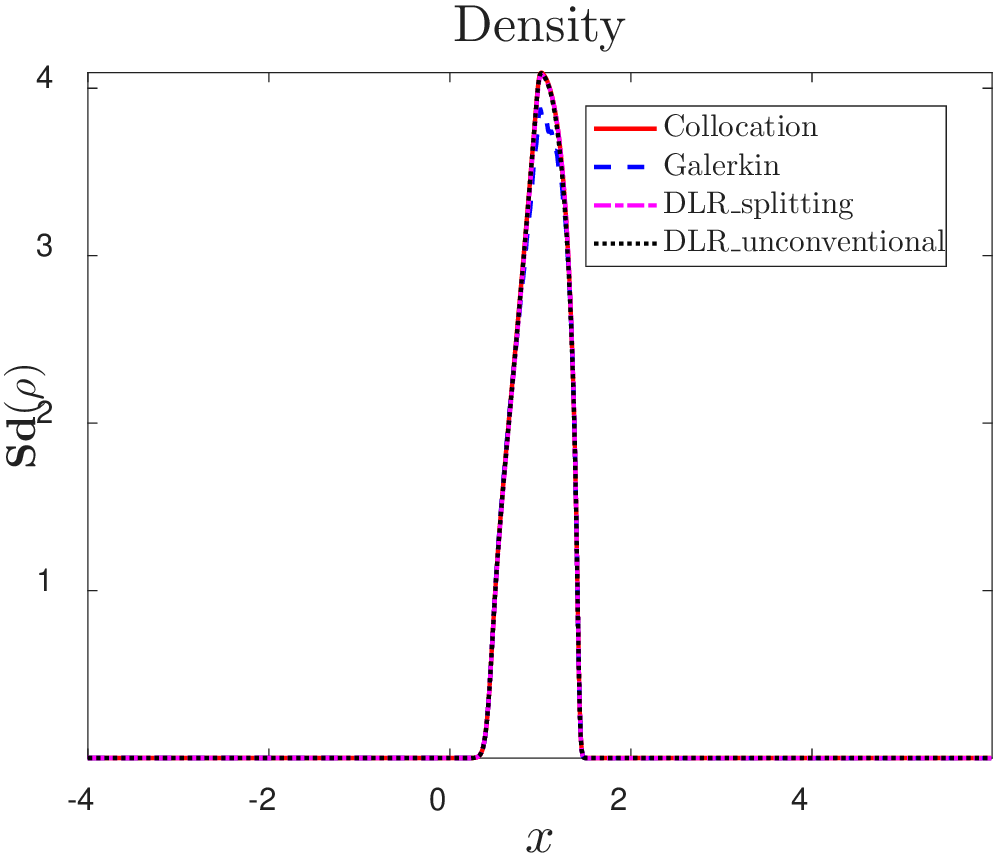}
        \caption{}
        \label{fig:subfig2_density_example1}
    \end{subfigure}
    \vspace{1em} 

    \begin{subfigure}{0.32\textwidth}
        \centering
        \includegraphics[width=\textwidth]{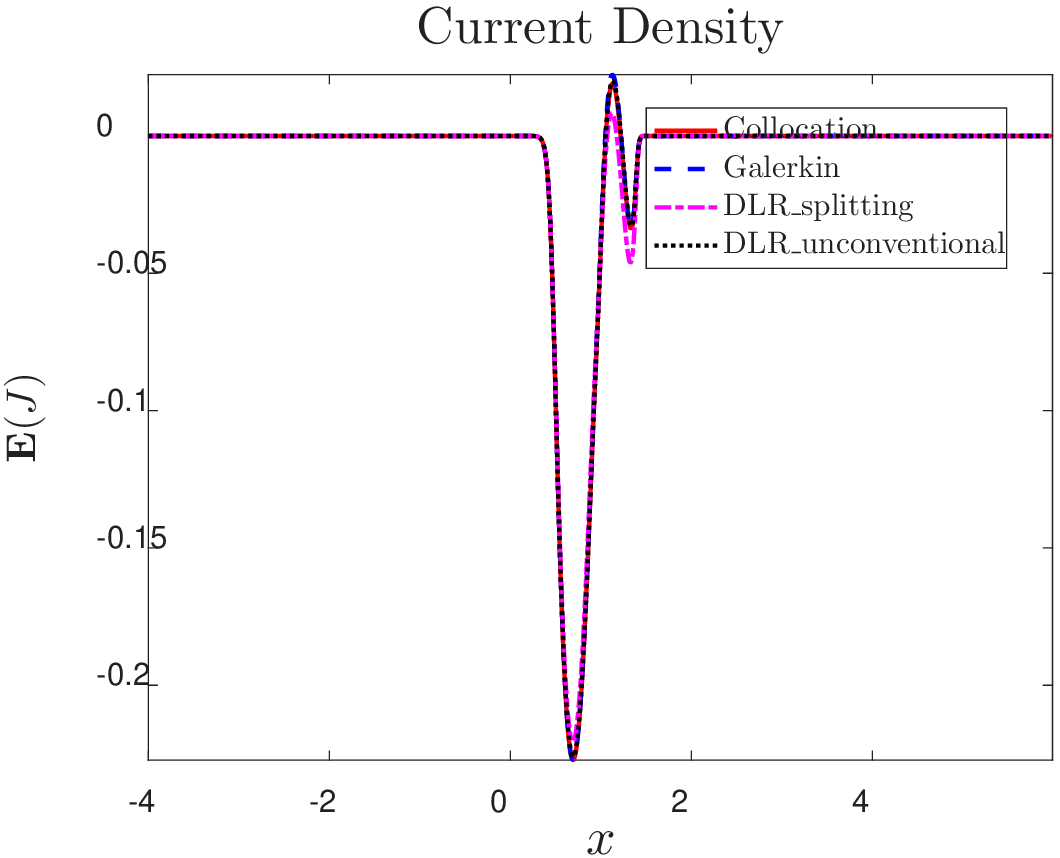}
        \caption{}
        \label{fig:subfig2_density_example1}
    \end{subfigure}
    \begin{subfigure}{0.32\textwidth}
        \centering
        \includegraphics[width=\textwidth]{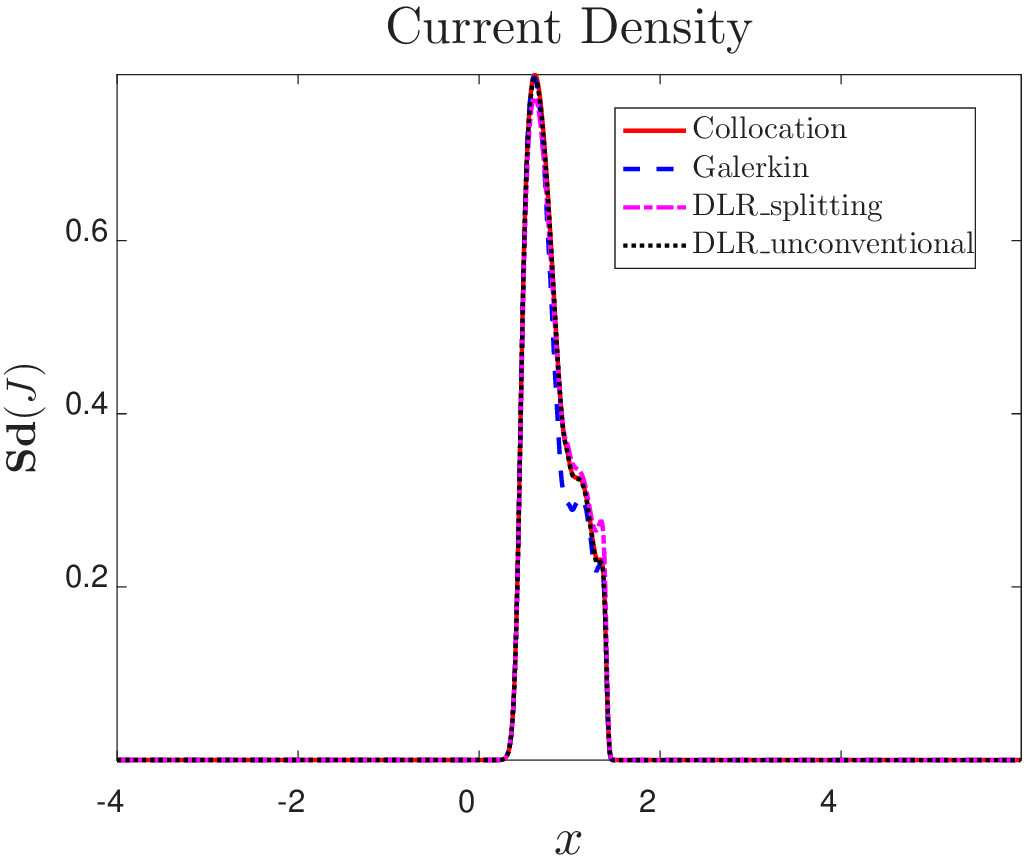}
        \caption{}
        \label{fig:subfig2_density_example1}
    \end{subfigure}
    \caption{Mean and std of $\rho$ and $j$ in Example $5$. The rank of the DLR method is $r=46$.}
    \label{fig:density_example6}
\end{figure}


\section{Conclusion}
\label{Sec: conclusion}

In this paper, we developed a dynamical low-rank (DLR) approximation framework for the semiclassical Schrödinger equation with uncertainties, successfully overcoming the computational bottlenecks posed by dual oscillations in spatial and random domains. By evolving the solution on a low-rank manifold via projector-splitting and unconventional integrators, we demonstrated that the wave function's oscillatory nature does not preclude an efficient low-rank representation. Most notably, the DLR method significantly outperforms traditional stochastic Galerkin methods by capturing complex quantum dynamics with a remarkably smaller number of basis functions. Our sensitivity analysis further reveals that while initial data dictates the baseline rank, the randomness of the potential primarily drives the subsequent rank evolution. These findings establish the DLR approach as a computationally lean and robust alternative for high-dimensional uncertainty quantification in the semiclassical regime, with future work focusing on rank-adaptive strategies for stronger potential perturbations.
\section*{Acknowledgments}
L. Xu thanks the Westlake University HPC Center for computational support. L.~Liu acknowledges the support by National Key R\&D Program of China (2021YFA1001200), Ministry of Science and Technology in China, General Research Fund (14303022, 14301423, 14307125) funded by Research Grants Council of Hong Kong.

\appendix
\section{Dynamical low-rank approximation for matrices}
\label{Sec: DLR_matrix}

In this section, we review the mathematical framework of the dynamical low-rank (DLR) approximation for matrices and describe the two primary integration schemes: the projector-splitting integrator and the unconventional integrator.

\subsection{Formulas}
Consider a matrix-valued ordinary differential equation for $A(t)\in\mathbb{R}^{m\times n}$:
\begin{equation}
    \frac{\mathrm{d}}{\mathrm{d}t}A(t)=F(A(t)),\quad A(0)=A_0,
\end{equation}
where $F:\mathbb{R}^{m\times n}\rightarrow\mathbb{R}^{m\times n}$ is a smooth function. When the dimensions $m$ and $n$ are large, direct numerical simulation using standard integrators, such as the fourth-order Runge-Kutta method, becomes computationally prohibitive. Such approaches require $O(mn)$ storage and $O(m^2n)$ operations per time step, making them inefficient for large-scale problems.

The dynamical low-rank approximation is a technique designed to reduce both storage requirements and computational complexity by leveraging the underlying low-rank structure of the solution \cite{kochDynamicalLowRankApproximation2007}. This approach is based on the observation that if the matrix $A(t)$ admits a low-rank approximation, i.e., $A(t) \approx Y(t) = U(t)S(t)V^T(t)$, where $U(t) \in \mathcal{V}_{m,r}$ and $V(t) \in \mathcal{V}_{n,r}$ are matrices with orthonormal columns and $S(t) \in \mathbb{R}^{r \times r}$ is a non-singular core matrix, then the storage cost is significantly reduced from $O(mn)$ to $O((m+n+r)r)$. 

Furthermore, the computational complexity per time step is also substantially lowered. While direct simulation of the full system scales as $O(m^2n)$, the DLR approximation typically scales linearly with respect to the larger dimension, i.e., $O((m+n)r^2)$ or similar, depending on the specific form of $F(Y)$ and the choice of integrator. The goal of the DLR approximation is to efficiently identify this low-rank structure and update its components at each time step. Since many scientific and engineering problems—such as those in image processing, quantum chemistry, and quantum physics—naturally exhibit low-rank dynamics, this method has gained broad applicability. We refer the readers to 
\cite{kochDynamicalLowRankApproximation2007,kochDynamicalTensorApproximation2010,lubichDynamicalApproximationHierarchical2013}
for detailed discussions on DLR for matrices and tensors with various integrator algorithms, and to \cite{einkemmerLowRankProjectorSplittingIntegrator2018,einkemmerLowRankAlgorithmWeakly2019,ostermannConvergenceLowRankLieTrotter2019,einkemmerLowrankProjectorsplittingIntegrator2020} for its application to partial differential equations.

Let $\mathcal{M}_r$ denote the manifold of matrices of fixed rank $r$, defined as 
\begin{align*}
    \mathcal{M}_r := \{Y = USV^T : U \in \mathcal{V}_{m,r}, V \in \mathcal{V}_{n,r}, S \in \mathbb{R}^{r \times r} \text{ is nonsingular}\}, 
\end{align*}
where $\mathcal{V}_{n,r}$ is the Stiefel manifold, with the superscript $T$ denoting the matrix transpose, consisting of matrices with orthonormal columns: 
\begin{align}
    \mathcal{V}_{n,r} := \{U \in \mathbb{R}^{n \times r} : U^T U = I_r\}. 
\end{align}
The tangent space of $\mathcal{M}_r$ at a point $Y(t)$ is denoted by $\mathcal{T}_{Y(t)}\mathcal{M}_r$. The dynamical low-rank approximation seeks an approximate solution $Y(t) \in \mathcal{M}_r$ by imposing the orthogonal projection of the residual onto the tangent space, leading to the Galerkin condition: 
\begin{align}\label{eq: Galerkin condition}
    \langle \dot{Y}-F(Y), Z \rangle = 0, \quad \forall Z \in \mathcal{T}_{Y(t)}\mathcal{M}_r, 
\end{align}
where $\dot{Y}=\frac{\mathrm{d}}{\mathrm{d}t}Y(t)$ is the tangent vector at $Y(t)$, $Z$ is a test matrix in the tangent space, and $\langle \cdot, \cdot \rangle$ denotes the Frobenius inner product, i.e., $\langle A, B \rangle = \operatorname{tr}(A^T B)$.

The following theorem provides the explicit evolution equations for the low-rank factors: 
\begin{theorem}
    The Galerkin condition \cref{eq: Galerkin condition} is equivalent to the system
    \begin{align}\label{eq: dynamical low-rank approximation}
        \dot{Y} = \dot{U} S V^T + U \dot{S} V^T + U S \dot{V}^T, 
    \end{align}
    where the factors $U, S$, and $V$ evolve according to 
    \begin{align}\label{eq: dynamical low-rank approximation 2}
        \dot{U} = P_{U}^{\perp} F(Y(t)) V S^{-1}, \quad \dot{S} = U^T F(Y(t)) V, \quad \dot{V} = P_{V}^{\perp} F(Y(t))^T U S^{-T}. 
    \end{align}
    Here, $P_{U}^{\perp} = I - UU^T$ and $P_{V}^{\perp} = I - VV^T$ are the orthogonal projectors onto the orthogonal complements of the ranges of $U$ and $V$, respectively.
\end{theorem}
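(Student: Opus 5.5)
We follow the function-space theorem of \cref{Sec: DLR_Schrodinger}, now for matrices. First we parametrize the tangent space. Differentiating a curve $Y(\tau)=U(\tau)S(\tau)V(\tau)^T$ in $\mathcal{M}_r$ at $\tau=0$ shows that every $Z\in\mathcal{T}_Y\mathcal{M}_r$ has the form $Z=\delta U\,SV^T+U\,\delta S\,V^T+US\,\delta V^T$. Differentiating $U^TU=I$ and $V^TV=I$ gives that $U^T\delta U$ and $V^T\delta V$ are skew-symmetric. Any skew parts can be absorbed into $\delta S$, since $U\Omega SV^T=U(\Omega S)V^T$. We may therefore impose the gauge $U^T\delta U=0$ and $V^T\delta V=0$, and also $U^T\dot U=0$ and $V^T\dot V=0$ for the solution factors. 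Under this gauge the decomposition is unique: $\delta S=U^TZV$, $\delta U=P_U^\perp ZVS^{-1}$, and $\delta V=P_V^\perp Z^TUS^{-T}$. Consequently $\mathcal{T}_Y\mathcal{M}_r=\{U\,\delta S\,V^T+\delta U\,V^T+U\,\delta V^T:\ U^T\delta U=0,\ V^T\delta V=0\}$, where $S$ has been absorbed using its invertibility.

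Next we test the Galerkin condition \cref{eq: Galerkin condition} against three families of directions, as in steps (1)--(3) of the earlier theorem.

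\emph{Core directions.} Take $Z=U\,\delta S\,V^T$ with $\delta S$ arbitrary. Using the gauge and orthonormality, $\langle \dot Y, Z\rangle=\operatorname{tr}(\dot S^T\delta S)$ and $\langle F(Y),Z\rangle=\operatorname{tr}((U^TF V)^T\delta S)$. Hence $\dot S=U^TF(Y)V$.

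\emph{Left-basis directions.} Take $Z=P_U^\perp\Phi\,V^T$ with $\Phi\in\mathbb{R}^{m\times r}$ arbitrary. The terms $U\dot SV^T$ and $US\dot V^T$ of $\dot Y$ are annihilated by $P_U^\perp$. Using $P_U^\perp\dot U=\dot U$, the Galerkin condition becomes $\langle \dot U S,P_U^\perp\Phi\rangle=\langle F(Y)V,P_U^\perp\Phi\rangle$. Since $P_U^\perp$ is symmetric and $\Phi$ is arbitrary, this gives $\dot U S=P_U^\perp F(Y)V$, and invertibility of $S$ yields $\dot U=P_U^\perp F(Y)VS^{-1}$.

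\emph{Right-basis directions.} Take $Z=U\Psi^TP_V^\perp$. The same argument applied to the transposes gives $\dot V=P_V^\perp F(Y)^TUS^{-T}$.

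For the converse, suppose \cref{eq: dynamical low-rank approximation 2} holds. Then $\dot Y$ equals $P_T F(Y)$, where $P_T(G)=GVV^T-UU^TGVV^T+UU^TG$, and one checks that $P_T$ is the Frobenius-orthogonal projection onto $\mathcal{T}_Y\mathcal{M}_r$. It follows that $\dot Y-F(Y)$ is orthogonal to the tangent space, which is exactly the Galerkin condition. The identity \cref{eq: dynamical low-rank approximation} is simply the product rule.

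The main obstacle is to justify the gauge choice rigorously. We must show that the three families of test directions span the whole tangent space and are mutually orthogonal, so that testing each family separately is equivalent to the full Galerkin condition. The plan is to verify orthogonality directly: for example, $\langle U\,\delta S\,V^T,\ P_U^\perp\Phi V^T\rangle=\operatorname{tr}(V\,\delta S^TU^TP_U^\perp\Phi V^T)=0$ because $U^TP_U^\perp=0$, and the other pairs vanish in the same way. Spanning follows from the unique decomposition established in the first step.
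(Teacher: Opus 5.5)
Your proof is correct and follows essentially the same route as the paper's proof of the function-space analogue in Section~2 (the appendix statement itself is only cited there, not proved): fix the orthogonal gauge, then test the Galerkin condition separately against core, left-basis and right-basis directions. The only cosmetic difference is that you absorb $S$ into the test direction $P_U^\perp\Phi V^T$ instead of carrying it along and cancelling it afterwards; your converse via the projector $P_T$ adds the reverse implication that the paper's argument leaves unchecked, and it makes the mutual orthogonality you flag as the main obstacle unnecessary, since the forward direction only needs the test directions to lie in $\mathcal{T}_Y\mathcal{M}_r$.
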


Consequently, the approximation $Y(t)=U(t)S(t)V^T(t)$ can be obtained by evolving the low-rank factors $U, S$, and $V$ according to the system in \cref{eq: dynamical low-rank approximation 2}. However, while this system can be integrated using standard numerical schemes such as the fourth-order Runge-Kutta method, it becomes numerically unstable when the core matrix $S$ is near-singular. This difficulty, often referred to as numerical instability, typically arises in cases of over-approximation where the chosen rank $r$ exceeds the effective rank of the solution. To overcome this issue, Lubich et al. proposed two robust integration algorithms: the projector-splitting integrator \cite{lubichProjectorsplittingIntegratorDynamical2014} and the unconventional integrator \cite{cerutiUnconventionalRobustIntegrator2022}.

\subsection{Numerical Algorithms}
The core idea of the projector-splitting integrator is to reformulate the DLR equation \eqref{eq: dynamical low-rank approximation} as a projection onto the tangent space $\mathcal{T}_{Y}\mathcal{M}_r$:
\begin{align}\label{eq: projection form}
    \dot{Y}(t) = P(Y(t)) F(Y(t)),
\end{align}
where $P(Y)$ is the orthogonal projection operator onto the tangent space at $Y=USV^T$. As shown in \cite{lubichProjectorsplittingIntegratorDynamical2014}, this operator can be explicitly expressed as
\begin{align}\label{eq: projection operator}
    P(Y)Z = ZP_V - P_U ZP_V + P_U Z,
\end{align}
where $P_U = UU^T$ and $P_V = VV^T$ denote the orthogonal projectors onto the range of $U$ and $V$, respectively.
The projector-splitting scheme then decomposes this operator into three parts according to the three terms in \eqref{eq: projection operator}, leading to the following sequence of sub-problems for $t \in [t_n, t_{n+1}]$:
\begin{align}
    \dot{Y}_{I}   &= F(Y_{I}) P_{V_I}, \quad Y_I(t_n)=Y_n, \\
    \dot{Y}_{II}  &= -P_{U_{II}} F(Y_{II}) P_{V_{II}}, \quad Y_{II}(t_n)=Y_I(t_{n+1}), \\
    \dot{Y}_{III} &= P_{U_{III}} F(Y_{III}), \quad Y_{III}(t_n)=Y_{II}(t_{n+1}).
\end{align}
The following lemma characterizes the evolution of the low-rank factors within each substep:
\begin{lemma}
    The solutions to the three sub-problems above are given by:
    \begin{enumerate}
        \item \textbf{K-step}: $Y_I(t) = U_I(t) S_I(t) V_n^T$, where $V_I(t) = V_n$ remains constant and the product $K(t) = U_I(t) S_I(t)$ evolves according to $\dot{K}(t) = F(K(t) V_n^T) V_n$.
        \item \textbf{S-step}: $Y_{II}(t) = U_{n+1} S_{II}(t) V_n^T$, where $U_{II}(t) = U_{n+1}$ and $V_{II}(t) = V_n$ are constant, and the core matrix evolves as $\dot{S}_{II}(t) = -U_{n+1}^T F(U_{n+1} S_{II}(t) V_n^T) V_n$.
        \item \textbf{L-step}: $Y_{III}(t) = U_{n+1} S_{III}(t) V_{III}(t)^T$, where $U_{III}(t) = U_{n+1}$ remains constant and the product $L(t) = V_{III}(t) S_{III}(t)^T$ evolves as $\dot{L}(t) = F(U_{n+1} L(t)^T)^T U_{n+1}$.
    \end{enumerate}
\end{lemma}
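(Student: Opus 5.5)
We prove the three items one at a time, each time plugging the factored ansatz into the sub-problem and using the gauge conditions $U^T\dot U=0$, $V^T\dot V=0$ (and their per-substep analogues) together with orthonormality of the factors. This is the matrix version of the lemma proved earlier for the Schr\"odinger setting, and the argument follows the same pattern.

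\textbf{K-step.} Write $Y_I=K V_I^T$ with $K=U_IS_I$. The equation $\dot Y_I=F(Y_I)V_IV_I^T$ becomes
\begin{align*}
\dot K V_I^T + K\dot V_I^T = F(Y_I)V_IV_I^T.
\end{align*}
Multiplying on the right by $V_I$ and using $V_I^TV_I=I$ and $\dot V_I^TV_I=0$ gives $\dot K=F(Y_I)V_I$. Substituting this back leaves $K\dot V_I^T=0$. Since $K=U_IS_I$ has full column rank ($U_I$ orthonormal, $S_I$ nonsingular), we get $\dot V_I=0$, so $V_I\equiv V_n$ and $\dot K=F(KV_n^T)V_n$.

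Existence of a solution of this form also needs a short check. Solve the $K$ equation, then take a QR factorization $K(t)=U_I(t)S_I(t)$, choosing the $U_I$ path so that the gauge holds (e.g.\ a polar-type rotation). Then $K V_n^T$ satisfies the sub-problem, and uniqueness of the ODE solution identifies it with $Y_I$. We also read off $U_{n+1}S_I(t_{n+1})=K(t_{n+1})$.

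\textbf{S-step.} Write $Y_{II}=U_{II}S_{II}V_{II}^T$ with initial data $U_{n+1}S_I(t_{n+1})V_n^T$. Expand $\dot Y_{II}$ into its three terms and multiply by $U_{II}^T$ on the left and $V_{II}$ on the right; the gauge conditions kill the $\dot U$ and $\dot V$ contributions and give
\begin{align*}
\dot S_{II}=-U_{II}^TF(Y_{II})V_{II}.
\end{align*}
What remains is $\dot U_{II}S_{II}V_{II}^T+U_{II}S_{II}\dot V_{II}^T=0$. Multiplying by $V_{II}$ on the right and using the gauge gives $\dot U_{II}S_{II}=0$, hence $\dot U_{II}=0$ since $S_{II}$ is nonsingular. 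Similarly $\dot V_{II}=0$. So $U_{II}\equiv U_{n+1}$ and $V_{II}\equiv V_n$.

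\textbf{L-step.} Transpose the K-step argument. Write $Y_{III}=U_{III}L^T$ with $L=V_{III}S_{III}^T$, multiply on the left by $U_{III}^T$, and use the gauge to obtain $\dot L^T=U_{III}^TF(Y_{III})$. Then $U_{III}$ is constant, equal to $U_{n+1}$, and transposing gives the stated equation for $L$.

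The main obstacle is the gauge bookkeeping. The sub-problems are ODEs for $Y$, not for the factors, so we must justify that a factored solution with constant frozen factors exists and is the unique solution. The cleanest way is the constructive route used in the K-step: build the candidate and verify it directly, which avoids any delicacy about nonsingularity along the flow.
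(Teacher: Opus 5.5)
Your proof is correct and follows the same argument the paper gives for the Schr\"odinger-setting version of this lemma in Section 2; the matrix lemma itself is stated there without proof. You substitute the factored form, multiply by the frozen basis and use the gauge to isolate $\dot K$, $\dot S_{II}$, $\dot L$, then use full rank of $K$, $S_{II}$, $L$ to conclude that the frozen factor has zero derivative. Your added existence/uniqueness check goes beyond the paper, but it does not avoid the nonsingularity issue as you claim: to verify that $K V_n^T$ solves $\dot Y_I = F(Y_I)P_{V_I}$, you still need $K(t)$ to keep full column rank (and $S_{II}(t)$ to stay nonsingular in the S-step), so that the projector onto the row space of $Y_I(t)$ really is $V_nV_n^T$.
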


The aforementioned lemma provides the foundation for the projector-splitting integrator, as summarized in Algorithm \ref{alg: projector splitting integrator}.

A key advantage of the projector-splitting integrator is its robustness against near-singular core matrices $S$ by avoiding explicit inversion. However, the scheme requires a backward time integration step (with $-\Delta t$), which can trigger numerical instabilities if the underlying problem possesses dissipative or irreversible physical properties. To circumvent this, Ceruti and Lubich \cite{cerutiUnconventionalRobustIntegrator2022} proposed the unconventional integrator. This approach updates the basis matrices $U$ and $V$ in parallel through independent $K$- and $L$-steps, subsequently determining the core matrix via an orthogonal projection. By eliminating the backward integration, the unconventional integrator ensures stability for a broader class of problems while enabling efficient parallelization. Detailed derivations are available in \cite{cerutiUnconventionalRobustIntegrator2022}, and the procedure is summarized in Algorithm \ref{alg: unconventional integrator}.

\begin{algorithm}[htbp]
    \caption{The projector-splitting integrator}
    \label{alg: projector splitting integrator}
    \begin{algorithmic}
        \STATE \textbf{Input}: Initial factors $U_0, S_0, V_0$, time step $\Delta t$, total steps $N$.
        \FOR {$n=0, 1, \dots, N-1$}
            \STATE 1. \textbf{K-step}: Integrate the matrix ODE for $K(t)$ from $t_n$ to $t_{n+1}$:
            \begin{equation*}
                \dot{K}(t) = F(K(t)V_n^T) V_n, \quad K(t_n) = U_n S_n.
            \end{equation*}
            Perform QR decomposition: $K(t_{n+1}) = U_{n+1} \hat{S}_{n+1}$.
            
            \STATE 2. \textbf{S-step}: Integrate the matrix ODE for $S(t)$ from $t_n$ to $t_{n+1}$ (backward step):
            \begin{equation*}
                \dot{S}(t) = -U_{n+1}^T F(U_{n+1} S(t) V_n^T) V_n, \quad S(t_n) = \hat{S}_{n+1}.
            \end{equation*}
            Set $\tilde{S}_n = S(t_{n+1})$.
            
            \STATE 3. \textbf{L-step}: Integrate the matrix ODE for $L(t)$ from $t_n$ to $t_{n+1}$:
            \begin{equation*}
                \dot{L}(t) = F(U_{n+1} L(t)^T)^T U_{n+1}, \quad L(t_n) = V_n \tilde{S}_n^T.
            \end{equation*}
            Perform QR decomposition: $L(t_{n+1}) = V_{n+1} S_{n+1}^T$.
        \ENDFOR
        \STATE \textbf{Output}: $Y_N = U_N S_N V_N^T$.
    \end{algorithmic}
\end{algorithm}



\begin{algorithm}[!h]
    \caption{The unconventional integrator}
    \label{alg: unconventional integrator}
    \begin{algorithmic}
        \STATE \textbf{Input}: Initial factors $U_0, S_0, V_0$, time step $\Delta t$, total steps $N$.
        \FOR {$n=0, 1, \dots, N-1$}
            \STATE 1. \textbf{Update bases in parallel}:
            \begin{itemize}
                \item \textbf{K-step}: Integrate the matrix ODE for $K(t)$ from $t_n$ to $t_{n+1}$:
                \begin{equation*}
                    \dot{K}(t) = F(K(t)V_n^T) V_n, \quad K(t_n) = U_n S_n.
                \end{equation*}
                Perform QR decomposition: $K(t_{n+1}) = U_{n+1} \hat{S}_{n+1}$, and compute $M = U_{n+1}^T U_n$.
                
                \item \textbf{L-step}: Integrate the matrix ODE for $L(t)$ from $t_n$ to $t_{n+1}$:
                \begin{equation*}
                    \dot{L}(t) = F(U_n L(t)^T)^T U_n, \quad L(t_n) = V_n S_n^T.
                \end{equation*}
                Perform QR decomposition: $L(t_{n+1}) = V_{n+1} \tilde{S}_{n+1}$, and compute $N = V_{n+1}^T V_n$.
            \end{itemize}
            
            \STATE 2. \textbf{S-step (Update core matrix)}:
            Integrate the matrix ODE for $S(t)$ from $t_n$ to $t_{n+1}$:
            \begin{equation*}
                \dot{S}(t) = -U_{n+1}^T F(U_{n+1} S(t) V_{n+1}^T) V_{n+1}, \quad S(t_n) = M S_n N^T.
            \end{equation*}
            Set $S_{n+1} = S(t_{n+1})$.
        \ENDFOR
        \STATE \textbf{Output}: $Y_N = U_N S_N V_N^T$.
    \end{algorithmic}
\end{algorithm}

\bibliographystyle{plain}
\bibliography{reference.bib}
\end{document}